\theoremstyle{plain}
\newtheorem{thm}{Theorem}[section]
\newtheorem{cor}[thm]{Corollary}
\newtheorem{lem}[thm]{Lemma}
\newtheorem{prop}[thm]{Proposition}
\newtheorem*{acknowledgements}{Acknowledgements}
\newtheorem*{notations}{Notations and conventions}
\theoremstyle{definition}
\newtheorem{dfn}{Definition}[section]
\theoremstyle{remark}
\newtheorem{rmk}{Remark}[section]
\numberwithin{equation}{section}
\newcommand{\pref}{\prettyref}
\newcommand{\Art}{\operatorname{\textbf{Art}_{\bfk}}}
\newcommand{\coh}{\operatorname{coh}}
\newcommand{\Coh}{\operatorname{Coh}}
\newcommand{\Coker}{\operatorname{Coker}}
\newcommand{\Cone}{\operatorname{Cone}}
\newcommand{\Def}{\operatorname{Def}}
\newcommand{\Ext}{\operatorname{Ext}}
\newcommand{\Gr}{\operatorname{Gr}}
\newcommand{\Hom}{\operatorname{Hom}}
\newcommand{\id}{\operatorname{id}}
\newcommand{\Ker}{\operatorname{Ker}}
\newcommand{\NS}{\operatorname{NS}}
\newcommand{\Perf}{\operatorname{Perf}}
\newcommand{\Pf}{\operatorname{Pf}}
\newcommand{\Pic}{\operatorname{Pic}}
\newcommand{\pr}{\operatorname{pr}}
\newcommand{\Set}{\operatorname{\textbf{Set}}}
\newcommand{\Spec}{\operatorname{Spec}}
\newcommand{\Spf}{\operatorname{Spf}}
\newcommand{\supp}{\operatorname{supp}}
\newcommand{\Tor}{\operatorname{Tor}}
\newcommand{\cC}{\mathcal{C}}
\newcommand{\cE}{\mathcal{E}}
\newcommand{\cH}{\mathcal{H}}
\newcommand{\cX}{\mathcal{X}}
\newcommand{\bC}{\mathbb{C}}
\newcommand{\bN}{\mathbb{N}}
\newcommand{\bP}{\mathbb{P}}
\newcommand{\bZ}{\mathbb{Z}}
\newcommand{\bfk}{\mathbf{k}}
\newcommand{\scrC}{\mathscr{C}}
\newcommand{\scrD}{\mathscr{D}}
\newcommand{\scrE}{\mathscr{E}}
\newcommand{\scrF}{\mathscr{F}}
\newcommand{\scrO}{\mathscr{O}}
\newcommand{\scrT}{\mathscr{T}}
\newcommand{\frakm}{\mathfrak{m}}
\title{Categorical generic fiber}
\author[H.~Morimura]{Hayato Morimura}
\address{Kavli Institute for the Physics and Mathematics of the Universe (WPI),
University of Tokyo,
5-1-5 Kashiwanoha,
Kashiwa,
Chiba,
277-8583,
Japan.}
\email{hayato.morimura@ipmu.jp}
\date{}
\begin{document}
\maketitle

\begin{abstract}
For flat proper families of algebraic varieties with a smooth fiber,
we describe the abelian category of coherent sheaves on the generic fiber as a Serre quotient.
As an application,
we prove specialization of derived equivalence.
As another application,
we provide new examples of Fourier--Mukai partners via deformations.
\end{abstract}

\section{Introduction}
\subsection{The main result}
The
\emph{categorical general fiber}
was introduced in
\cite{HMS11}
to study the generic categorical behavior of formal deformations of
$K3$
surfaces.
When a formal deformation is effective,
one can consult the generic fiber of an effectivization.
In analytic setting,
Raynaud constructed the generic fiber of formal deformations \cite{Ray}.
Based on his idea,
Huybrechts--Macr\`i--Stellari developed the categorical general fiber,
providing a categorical analogue of the generic fiber
for noneffective formal deformations of
$K3$
surfaces.
As shown in
\cite{HMS09},
it captures the generic categorical behavior of formal deformations.
Below we briefly review their work.

Let
$\cX \to \Spf \bfk \llbracket t \rrbracket$
be a formal deformation of a smooth projective $\bfk$-variety
$X_0$.
Recall that
the
\emph{abelian category of coherent sheaves on the general fiber}
is the Serre quotient
\begin{align*}
\Coh (\cX_{\bfk (( t ))}) \coloneqq \Coh (\cX) / \Coh (\cX)_0,
\end{align*}
where  
$\Coh (\cX)_0 \subset \Coh (\cX)$
is the full abelian subcategory
spanned by coherent $\bfk \llbracket t \rrbracket$-torsion $\scrO_\cX$-modules.
By
\cite[Theorem 3.2]{Miy}
the derived category
$D^b \left( \Coh \left( \cX_{\bfk (( t ))} \right) \right)$
of the Serre quotient
is equivalent to the Verdier quotient
\begin{align*}
D^b (\cX) / D^b_0 (\cX),
\end{align*}
where
$D^b_0 (\cX)$
is the full triangulated subcategory
spanned by complexes with $\bfk \llbracket t \rrbracket$-torsion cohomology.
Huybrechts--Macr\`i--Stellari showed the $\bfk (( t ))$-linear exact equivalence
\begin{align*}
D^b \left( \Coh \left( \cX_{\bfk (( t ))} \right) \right)
\simeq
D^b_{\rm{c}} \left( \rm{Mod} \left( \scrO_\cX \right) \right) / D^b_{\rm{c}_0} \left(\rm{Mod} \left( \scrO_\cX \right) \right)
\end{align*}
when
$X_0$
is a
$K3$
surface
\cite[Theorem 1.1]{HMS11}.
Here 
$D^b_{\rm{c}} \left( \rm{Mod} \left( \scrO_\cX \right) \right)$
is the bounded derived category of $\scrO_\cX$-modules with coherent cohomology
and
$D^b_{\rm{c}_0} \left( \rm{Mod} \left( \scrO_\cX \right) \right)$
is its full triangulated subcategory
spanned by complexes with $\bfk \llbracket t \rrbracket$-torsion cohomology. 
The latter Verdier quotient is called the
\emph{derived category of the general fiber}.

If
$\cX$
is effective with a proper effectivization
$X$,
i.e.,
isomorphic to the formal completion of a proper $\bfk \llbracket t \rrbracket$-scheme
$X$,
then by
\cite[Corollorary III5.1.6]{GD}
we have
\begin{align*}
D^b (X) \simeq D^b (\cX)
\end{align*}
and
$D^b \left( \Coh \left( \cX_{\bfk (( t ))} \right) \right)$
is equivalent to the Verdier quotient
\begin{align*}
D^b (X) / D^b_0 (X),
\end{align*}
which is $\bfk (( t ))$-linear.
This can be regarded as an effectivization of
$D^b \left( \Coh \left( \cX_{\bfk (( t ))} \right) \right)$. On the other hand,
the derived category of the generic fiber
$X_{\bfk (( t ))}$
of the effectivization
$X$
gives another $\bfk (( t ))$-linear category.
By
\cite[Theorem 1.2]{BFN}
also
$D^b (X_{\bfk (( t ))})$
is obtained from the $\bfk \llbracket t \rrbracket$-linear category
$D^b (X)$.
As both
$D^b (X) / D^b_0 (X)$
and
$D^b (X_{\bfk (( t ))})$
carry sufficient information on the generic categorical behavior of formal deformations,
it is natural to ask
whether they are equivalent as a $\bfk (( t ))$-linear triangulated category.
Motivated by this question,
we prove the following.

\begin{thm}[Theorem 2.5] \label{thm:esurjIntro}
Let
$X_R$
be a smooth separated family over a noetherian connected regular affine $\bfk$-scheme
$\Spec R$
whose closed points are
$\bfk$-rational.
Let
$K$
be the field of fractions of
$R$
and
$X_K$
the generic fiber.
Then there exists a $K$-linear equivalence
\begin{align*}
\coh(X_K) \simeq \coh(X_R) / \coh(X_R)_0
\end{align*}
of abelian categories,
where
$\coh(X_R)_0$
is the Serre subcategory spanned by $R$-torsion sheaves.
\end{thm}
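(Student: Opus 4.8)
The plan is to obtain the equivalence from the pullback $j^{*}\colon\coh(X_R)\to\coh(X_K)$ along the canonical morphism $j\colon X_K\to X_R$, by checking that $j^{*}$ is exact with kernel $\coh(X_R)_0$, essentially surjective, and fully faithful. First some reductions. Since $\Spec R$ is connected and regular it is integral, so $R$ is a noetherian domain with fraction field $K=\varinjlim_{0\neq f\in R}R_f$, and $X_R$, being of finite type and separated over $R$, is noetherian; moreover
\begin{align*}
X_K = X_R\times_{\Spec R}\Spec K = \varprojlim_{0\neq f\in R}X_{R_f}
\end{align*}
is a cofiltered limit of the open subschemes $j_f\colon X_{R_f}\hookrightarrow X_R$ whose transition maps are affine (for $f\mid g$ the inclusion $\Spec R_g\hookrightarrow\Spec R_f$ is a principal open immersion). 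In particular $j$ is flat, so $j^{*}$ is exact, and a coherent sheaf $\mathcal F$ lies in $\ker j^{*}$ iff $\mathcal F_K=0$ iff some nonzero $f\in R$ annihilates $\mathcal F$ (test on a finite affine cover of $X_R$, on which $j^{*}$ is localization at $R\setminus\{0\}$), so $\ker j^{*}=\coh(X_R)_0$. This is therefore a Serre subcategory and $j^{*}$ factors through an exact functor $\overline{j^{*}}\colon\coh(X_R)/\coh(X_R)_0\to\coh(X_K)$, which will be $K$-linear once we see each nonzero $f\in R$ acts invertibly on the source.

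For essential surjectivity, let $\mathcal E\in\coh(X_K)$. By Grothendieck's limit theorem for finitely presented sheaves \cite[IV$_3$, 8.5.2]{GD}, applied to the presentation $X_K=\varprojlim_f X_{R_f}$ above, there are a nonzero $f$ and $\mathcal E_f\in\coh(X_{R_f})$ with $j_f^{*}\mathcal E_f\cong\mathcal E$. Since $X_{R_f}\subseteq X_R$ is open and $X_R$ is noetherian, $\mathcal E_f$ extends to a coherent sheaf $\mathcal F$ on $X_R$, and then $j^{*}\mathcal F\cong\mathcal E$.

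For full faithfulness it suffices to show that, for all $\mathcal F,\mathcal G\in\coh(X_R)$, in the factorization
\begin{align*}
\Hom_{X_R}(\mathcal F,\mathcal G)\longrightarrow\Hom_{\coh(X_R)/\coh(X_R)_0}(\mathcal F,\mathcal G)\xrightarrow{\ \overline{j^{*}}\ }\Hom_{X_K}(\mathcal F_K,\mathcal G_K)
\end{align*}
both the composite and the first arrow exhibit their target as the localization of $\Hom_{X_R}(\mathcal F,\mathcal G)$ at $R\setminus\{0\}$; then $\overline{j^{*}}$ is an isomorphism on Hom-groups. The composite sends $\phi$ to $\phi_K$, and flat base change identifies $\Hom_{X_K}(\mathcal F_K,\mathcal G_K)$ with $\Hom_{X_R}(\mathcal F,\mathcal G)\otimes_R K$: pass to the coherent sheaf $\scrH om_{X_R}(\mathcal F,\mathcal G)$, note forming it commutes with the flat pullback $j^{*}$ because $\mathcal F$ is finitely presented, and apply \cite[III, 1.4.15]{GD}. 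For the first arrow we use the standard description of the Serre quotient: for nonzero $f\in R$ the endomorphism $f\cdot\id_{\mathcal G}$ has kernel and cokernel annihilated by $f$, hence lying in $\coh(X_R)_0$, so $f$ acts invertibly on $\Hom_{\coh(X_R)/\coh(X_R)_0}(\mathcal F,\mathcal G)$; every morphism there is represented by some $\phi\colon\mathcal F'\to\mathcal G/\mathcal G'$ with $\mathcal F/\mathcal F',\mathcal G'\in\coh(X_R)_0$, whence $f\mathcal F\subseteq\mathcal F'$ and $g\mathcal G'=0$ for suitable nonzero $f,g$ and $\phi$ becomes the class of an honest morphism $\mathcal F\to\mathcal G$ divided by $fg$; and the class of an honest $\phi\colon\mathcal F\to\mathcal G$ vanishes in the quotient iff $\im\phi\in\coh(X_R)_0$ iff $f\phi=0$ for some nonzero $f$. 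Hence the first arrow, and therefore the composite, is the localization at $R\setminus\{0\}$, so $\overline{j^{*}}$ is fully faithful (and $K$-linear); together with essential surjectivity this proves the theorem.

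I expect essential surjectivity to be the main obstacle, since it is the only step where the geometry genuinely enters: noetherian approximation is needed to realize $\mathcal E$ on some finite level $X_{R_f}$, and the noetherian hypothesis on $X_R$ is needed to extend it across the complement of $X_{R_f}$. The other two properties are formal, resting only on flatness of $j$, flat base change, and Gabriel's description of morphisms in a Serre quotient.
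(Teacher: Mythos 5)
Your argument is correct and reaches the same conclusion, but the essential‑surjectivity step is handled by a genuinely different mechanism than the paper's. The paper first uses regularity of $X_K$ to reduce to locally free sheaves (writing any coherent sheaf as a cokernel of locally free ones), then trivializes over a finite affine cover, lifts the transition cocycle through the $K$‑linear isomorphism $\Hom_{X_R}(\scrE,\scrF)\otimes_R K\cong\Hom_{\cC}(\scrE_K,\scrF_K)$, shrinks the base so those lifts become isomorphisms and satisfy the cocycle condition, glues to a locally free sheaf on $X_T$ for some principal affine open $\Spec T\subset\Spec R$, and finally extends across the complement by the Hartshorne exercise. You instead invoke Grothendieck's limit theorem (EGA IV$_3$, 8.5.2) applied to $X_K=\varprojlim_f X_{R_f}$ with affine transition maps, which descends an arbitrary coherent sheaf on $X_K$ directly to some $X_{R_f}$, and then extend across the complement in the same way. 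Your route is shorter, avoids the reduction to locally free sheaves entirely, and therefore does not actually use the regularity hypothesis at this step; the paper's route is more elementary and self‑contained, at the cost of depending on regularity and a longer cocycle‑lifting argument. For full faithfulness the two proofs coincide in substance: you identify both legs of the factorization $\Hom_{X_R}(\scrF,\scrG)\to\Hom_{\cC}(\scrF_K,\scrG_K)\to\Hom_{X_K}(\scrF_K,\scrG_K)$ with localization at $R\setminus\{0\}$, using flat base change for $\scrHom$ and the Gabriel roof description, which is precisely the content of the paper's Lemma 2.1, Lemma 2.4 and Proposition 2.3.
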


\begin{cor}[Corollary 2.6] \label{cor:CGFIntro} 
Under the same assumption as above,
there exists an exact $K$-linear equivalence
\begin{align*}
D^b(X_K) \simeq D^b(X_R) / D^b_0(X_R),
\end{align*}
where
$D^b_0(X_R)$
is the full triangulated subcategory
spanned by complexes with $R$-torsion cohomology.
\end{cor}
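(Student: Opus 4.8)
The plan is to deduce the corollary formally from Theorem~\ref{thm:esurjIntro} by passing to bounded derived categories and applying the comparison between the derived category of a Serre quotient and a Verdier quotient recalled in the introduction, exactly as was done there for formal deformations, only now for the genuine scheme $X_R$. Here $D^b(X_R)$ stands for $D^b(\coh(X_R))$, which, $X_R$ being a noetherian scheme, is equivalent to the full subcategory of $D(\Qch(X_R))$ of complexes with bounded coherent cohomology; likewise for $X_K$. So one may argue throughout with the abelian categories of coherent sheaves.

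First, Theorem~\ref{thm:esurjIntro} gives a $K$-linear exact equivalence of abelian categories $\coh(X_K) \simeq \coh(X_R)/\coh(X_R)_0$, and an equivalence of abelian categories induces an exact equivalence on bounded derived categories, so $D^b(X_K) \simeq D^b(\coh(X_R)/\coh(X_R)_0)$. Next I would apply \cite[Theorem 3.2]{Miy} to the Serre subcategory $\coh(X_R)_0 \subset \coh(X_R)$: it identifies the latter with the Verdier quotient $D^b(X_R)/D^b_0(X_R)$, because $D^b_0(X_R)$ is precisely the full triangulated subcategory of $D^b(X_R)$ of complexes whose cohomology sheaves lie in $\coh(X_R)_0$ --- a coherent $\scrO_{X_R}$-module lies in $\coh(X_R)_0$ if and only if it is $R$-torsion. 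Composing the two equivalences produces an exact equivalence $D^b(X_K) \simeq D^b(X_R)/D^b_0(X_R)$.

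It then remains to check $K$-linearity. Since $R$ is a noetherian connected regular affine ring, it is an integral domain with fraction field $K$, and for every nonzero $r \in R$ and every object $\cE$ of $D^b(X_R)$ the cone $\Cone(r \cdot \id_\cE)$ has $R$-torsion cohomology (each cohomology sheaf is killed by a power of $r$), hence lies in $D^b_0(X_R)$. Therefore every nonzero element of $R$ acts invertibly on $D^b(X_R)/D^b_0(X_R)$, which is thus $K$-linear; the analogous computation with kernels and cokernels shows the localization $\coh(X_R) \to \coh(X_R)/\coh(X_R)_0$ is $K$-linear, so all functors above, and hence the composite equivalence, are $K$-linear.

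The step deserving the most care --- though it is routine, and parallel to the introduction --- is the application of \cite[Theorem 3.2]{Miy}, i.e.\ verifying that $\coh(X_R)_0$ satisfies its hypotheses; this is where noetherianity of $X_R$ enters, since over a noetherian scheme being $R$-torsion is a local property, so $\coh(X_R)_0$ is closed under subobjects, quotients and extensions and every coherent sheaf contains a largest $R$-torsion subsheaf with quotient free of $R$-torsion. If one prefers to bypass \cite{Miy}, a direct alternative is to use the exact pullback $\coh(X_R) \to \coh(X_K)$ along the natural morphism $X_K \to X_R$: it annihilates $\coh(X_R)_0$, so the induced functor $D^b(X_R) \to D^b(X_K)$ annihilates $D^b_0(X_R)$ and factors through the Verdier quotient, and one checks the factorization is an equivalence by localizing $\Hom$-groups (for full faithfulness) and by a derived version of the surjectivity in Theorem~\ref{thm:esurjIntro} (for essential surjectivity).
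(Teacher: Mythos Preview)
Your argument is correct and follows essentially the same route as the paper: derive the equivalence on bounded derived categories from the abelian equivalence of Theorem~\ref{thm:esurjIntro}, then invoke \cite[Theorem~3.2]{Miy} to identify $D^b(\coh(X_R)/\coh(X_R)_0)$ with the Verdier quotient $D^b(X_R)/D^b_0(X_R)$. The paper handles $K$-linearity via the explicit Hom computations of Lemmas~\ref{lem:Klin} and~\ref{lem:VKlin} rather than your cone argument, but this is a cosmetic difference.
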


We impose the technical condition on
$\Spec R$
to include smooth proper effectivizations of formal families over formal power series rings,
besides smooth proper families over nonsingular affine $\bfk$-varieties.

\subsection{The first application}
Probably the most important advantage to describe the derived category of the generic fiber as a Verdier quotient is that
any object of
$D^b(X_K)$
lifts to that of
$D^b(X_R)$.
The quotient description extends to nonaffine base case for flat proper families of $\bfk$-varieties with a smooth fiber.

\begin{thm} [\pref{thm:gbase}] \label{thm:globalCGFIntro}
Let
$\pi \colon X \to S$
be a flat proper morphism of $\bfk$-varieties.
Assume that
at least one of the closed fibers of
$\pi$
is smooth.
Then there exists a $K$-linear exact equivalence
\begin{align*} 
D^b(X) / \Ker(\bar{\iota}^*_K)
\simeq
D^b(X_K),
\end{align*}
where
$K = k(S)$
is the function field
and
$\bar{\iota}_K \colon X_K \to X$
is the canonical morphism.
\end{thm}

In particular,
there always exists a lift
$\cE \in D^b(X \times_S X^\prime)$
of a Fourier--Mukai kernel
$\cE_K \in D^b(X_K \times X^\prime_K)$
whenever
$X_K, X^\prime_K$
are derived-equivalent.
It allows us to extend the derived equivalence of the generic fibers to that of general fibers.

\begin{thm} [Corollary \pref{cor:1.4}] \label{thm:mainIntro}
Let
$\pi \colon X \to S, \pi^\prime \colon X^\prime \to S$
be flat proper morphisms of $\bfk$-varieties.
Assume that
their generic fibers
$X_K, X^\prime_K$
are derived-equivalent.
Assume further that
at least one of the closed fibers of each
$\pi$
and
$\pi^\prime$
are smooth.
Then there exists a nonempty open subset
$U \subset S$
over which the restrictions
$X_U, X^\prime_U$
become derived-equivalent.
In particular,
over
$U$
any pair of closed fibers are derived-equivalent.
\end{thm}

This can be regarded as a categorical analogue of the fact that
isomorphic generic fibers imply birational families in our setting. 
In classical algebraic geometry,
the generic fiber often controls behaviors of general fibers.
For instance,
if the generic fiber satisfies a certain property
which is constructible,
then general fibers also satisfy the same property.
Such characteristics of the generic fiber must be translated via Gabriel's theorem
\cite{Gab}
into the abelian category of coherent sheaves. 
Nevertheless,
as there exist pairs of nonisomorphic derived-equivalent $K$-varieties,
it makes sense to wonder
how the derived category of the generic fiber affects that of general fibers.

When
$\bfk$
is a universal domain,
i.e.,
an algebraically closed field of infinite transcendence degree of the prime field,
very general fibers are obtained as the base changes of the geometric generic fiber along isomorphisms
$\bfk \cong \bar{K}$
from
\cite[Lemma 2.1]{Via}.
Hence in this case the derived equivalence of the generic fibers induces that of very general fibers.
The proof of
\cite[Theorem 1.1]{Mor}
shows that
\pref{thm:mainIntro}
follows
as soon as a lift of the Fourier--Mukai kernel induces the derived equivalence of a single pair of closed fibers.
However,
since the composition of the canonical morphisms
$R \to K \to \bar{K}$
with a fixed
$\bar{K} \cong \bfk$
does not coincide with the surjection
$R \to \bfk$,
the induced Fourier Mukai kernel on very general fibers are in general different from the restriction of the lift.

Consider another description
\begin{align*}
D^b_{dg}(X_\xi)
\simeq
\Perf_{dg}(X_\xi)
\simeq
\Perf_{dg}(X)
\otimes_{\Perf_{dg}(S)}
\Perf_{dg}(K)
\end{align*}
of a dg enhancement
$D^b_{dg}(X_\xi)$
of
$D^b(X_\xi)$
obtained from
\cite[Theorem 1.2]{BFN}
and
\cite{Coh}.
By
\cite{CP}
and
\cite{Miy}
the category
$\Perf_{dg}(K)$
is a dg enhancement of the Verdier quotient
\begin{align*}
D^b(S) / D^b_{\leq \dim S - 1}(S),
\end{align*}
where
$D^b_{\leq \dim S - 1}(S)$
is the full triangulated subcategory spanned by objects with cohomology supported on dimension at most
$\dim S - 1$.
Thus removing the torsion support from
$D^b(X)$
is equivalent to removing all closed fibers from the supports of objects of
$D^b(X)$.
In particular,
from a collection of finitely many objects and Hom-sets between them in
$D^b(X)$,
one can remove its torsion support by shrinking the base.

In order to show that
the restriction of the lift to general fibers define equivalences,
we apply the argument in the proof of
\cite[Theorem 1.1]{Mor}
to a fixed strong generator
$E_U$
of
$D^b(X_U)$,
which always exists over sufficiently small open subset
$U \subset S$.
By shrinking
$U$
further if necessary,
one can remove the torsion parts with respect to the base from
$E_U$
and
its relevant Hom-sets.
Then we invoke some basic categorical results to show that
the value of the counit morphism on the trimmed strong generator is an isomorphism,
which implies that
the restriction of the counit morphism is a natural isomorphism.

\pref{thm:mainIntro}
tells us that
the derived category of the generic fiber determines an $U$-linear triangulated category 
$D^b(X_U) \simeq D^b(X) / D^b_Z(X)$
for some open subset
$U \subset S$
and
its complement
$Z$,
where
$D^b_Z (X) \subset D^b(X)$
is the full $S$-linear triangulated subcategory with cohomology supported on
$X_Z$.
In general,
the derived category of the generic fiber cannot recover that of the initial fiber
as we have
$D^b(X_K) \simeq D^b(X_R) / D^b(X_0)$
for
$R = \bfk \llbracket t \rrbracket$.
Rather,
\cite{HMS09}
and
our results suggest that
it carries information on derived categories of general fibers.
We expect that
\pref{thm:mainIntro}
provides a way to seek
categorical constructible properties
and
their derived invariance.

\subsection{The second application}
Another advantage to describe the derived category of the generic fiber as a Verdier quotient
is that
Fourier--Mukai machinery carries over easily.
Suppose that
\begin{align*}
\Phi_\cE \colon D^b (X_R) \to D^b (X^\prime_R)
\end{align*}
is a relative Fourier--Mukai transform
of
smooth proper families over
$R$
with kernel
$\cE \in D^b(X_R \times_R X^\prime_R)$.
Then
$\Phi_\cE$
induces a Fourier--Mukai transform
\begin{align*}
\Phi_{\cE_K} \colon D^b (X_K) \to D^b (X^\prime_K)
\end{align*}
of the generic fibers,
where the kernel
$\cE_K$
is the pullback along the canonical inclusion
$R \hookrightarrow K$.
By the standard argument the further base change to the closure 
defines a Fourier--Mukai transform
\begin{align*}
\Phi_{\cE_{\bar{K}}}
\colon
D^b (X_{\bar{K}}) \to D^b (X^\prime_{\bar{K}})
\end{align*}
of the geometric generic fibers.

Typical examples for our results are given by deformations of higher dimensional Calabi--Yau manifolds. 
Recently,
the author proved the following.

\begin{thm}[{\cite[Theorem 1.1]{Mor}}] \label{thm:IDeqIntro}
Let
$X_0, X_0^\prime$
be derived-equivalent Calabi--Yau manifolds
of dimension more than two.
Then there exists a nonsingular affine $\bfk$-variety
$\Spec S$ 
such that
smooth projective versal deformations
$X_S, X^\prime_S$
over
$S$  
are derived-equivalent. 
\end{thm}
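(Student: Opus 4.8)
The plan is to represent the given equivalence by a Fourier--Mukai kernel, deform the kernel along a common versal base, and then algebraize. First I would invoke Orlov's representability theorem to write the equivalence as $\Phi_{\cE_0}\colon D^b(X_0)\xrightarrow{\sim} D^b(X_0^\prime)$ for a unique $\cE_0\in D^b(X_0\times X_0^\prime)$. Since $X_0$ is a Calabi--Yau manifold of dimension $n\geq 3$, the Bogomolov--Tian--Todorov theorem gives that its formal deformation functor $\Def(X_0)$ is unobstructed, hence pro-represented by $\Spf\widehat R$ with $\widehat R$ a regular complete local $\bfk$-algebra; moreover $H^1(\scrO_{X_0})=H^2(\scrO_{X_0})=0$, so a fixed ample line bundle on $X_0$ deforms uniquely over $\widehat R$. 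The same holds for $X_0^\prime$.

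The heart of the argument is the deformation theory of the kernel. A Fourier--Mukai equivalence induces an isomorphism $HH^\bullet(X_0)\cong HH^\bullet(X_0^\prime)$ of Hochschild cohomology compatible with the $L_\infty$-structure that controls obstructions; by Hochschild--Kostant--Rosenberg and the vanishing $H^0(\wedge^2 T_{X_0})\cong H^{n-2,0}(X_0)=0=H^2(\scrO_{X_0})$ for a strict Calabi--Yau $n$-fold with $n\geq 3$, both sides are identified with the first-order deformation space $H^1(T_{X_0})$. Consequently the deformation functor $\Def(\cE_0)$ of the kernel as a perfect complex, allowing the ambient product to deform as well, maps isomorphically onto both $\Def(X_0)$ and $\Def(X_0^\prime)$, so these share the common pro-representing base $\Spf\widehat R$. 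Following Toda's deformation argument for Fourier--Mukai transforms together with the deformation--obstruction theory for complexes of Huybrechts--Thomas, I would lift $\cE_0$ to a perfect complex $\widehat\cE$ on $\widehat\cX\times_{\widehat R}\widehat\cX^\prime$, where $\widehat\cX,\widehat\cX^\prime$ denote the formal universal deformations over $\Spf\widehat R$. The point where $n\geq 3$ is indispensable is exactly here: the extra deformation directions of $\cE_0$ that would be contributed by $H^0(\wedge^2 T_{X_0})\oplus H^2(\scrO_{X_0})$, which for $K3$ surfaces force one into twisted derived categories (cf.\ \cite{HMS11}), now vanish, so no gerbe is introduced, and one checks that $\Phi_{\widehat\cE}$ is an equivalence fiberwise, detected on the closed fiber by the spanning-class criterion, hence relatively.

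Next I would algebraize. The unique deformations of ample line bundles turn $\widehat\cX,\widehat\cX^\prime$ into formally polarized families, so Grothendieck's existence theorem algebraizes them to projective families over $\Spec\widehat R$ carrying the complex $\widehat\cE$. Artin approximation then produces a finite-type $\bfk$-algebra $S$ with a $\bfk$-point $s_0$, which may be taken smooth over $\bfk$ since $\widehat R$ is a power series ring, so that $\Spec S$ is a nonsingular affine $\bfk$-variety, equipped with smooth projective families $X_S,X_S^\prime$ restricting to $X_0,X_0^\prime$ at $s_0$, relatively ample line bundles on each, and a perfect complex $\cE\in D^b(X_S\times_S X_S^\prime)$ with $\cE|_{s_0}\cong\cE_0$, such that $\Spec\widehat\scrO_{S,s_0}\to\Spf\widehat R$ is an isomorphism; thus $X_S,X_S^\prime$ are versal at $s_0$. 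Finally $\Phi_\cE\colon D^b(X_S)\to D^b(X_S^\prime)$ is a relative Fourier--Mukai functor restricting to $\Phi_{\cE_0}$ over $s_0$; the locus of $\Spec S$ over which it is fiberwise an equivalence is open, since the units and counits of its adjunctions are isomorphisms on an open set by cohomology and base change for the relatively perfect kernels together with relative ampleness, so after shrinking $\Spec S$ to an affine open neighborhood of $s_0$, still nonsingular affine, $\Phi_\cE$ is a relative equivalence, i.e.\ $D^b(X_S)\simeq D^b(X_S^\prime)$.

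The main obstacle is the middle step: carrying $\cE_0$ all the way along the versal base while keeping $\Phi$ an equivalence and avoiding any twist. Unobstructedness of $\Def(\cE_0)$ is not automatic from that of $\Def(X_0)$; one transports it across the derived equivalence by matching the two obstruction theories through the $HH^\bullet$-isomorphism, or equivalently by Toda's direct argument, and this is precisely where the Calabi--Yau condition and $\dim>2$ are genuinely used. The spreading-out step is then routine given the relative polarizations arranged during algebraization.
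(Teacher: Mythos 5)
This theorem is imported from \cite{Mor} rather than proved in the present paper, but the argument sketched here in Section 5.1 and Section 5.3 (deform the Fourier--Mukai kernel inductively over the hull using that $H^0(\wedge^2 T_{X_0})=H^2(\scrO_{X_0})=0$ for a Calabi--Yau of dimension $>2$, algebraize the formal universal family by Grothendieck existence, pass to a nonsingular affine \'etale neighborhood by Artin approximation, then shrink the base so the relative integral functor is an equivalence via \cite[Proposition 1.3]{LST}) is exactly the route you take, including the key point that the vanishing of those two Hochschild components is what removes the gerbe issue present for $K3$ surfaces. Your proposal is correct and takes essentially the same approach as \cite{Mor} as described in this paper.
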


Here,
the derived equivalence is given by the relative Fourier--Mukai transform with kernel obtained by deformation of the original kernel for central fibers.
Similarly,
the Fourier--Mukai transform of central fibers extends to proper effectivizations of universal formal families.   
Then our current results implies the derived equivalence of the
generic
and
geometric generic
fibers of both families.
One can check that
they are Calabi--Yau manifolds of dimension
$\dim X_0$.
When the central fibers are nonbirational,
in some special cases
one can deduce the nonbirationality of the
generic
and
geometric generic
fibers.
See also specialization of birational types over a smooth connected curve
\cite[Theorem 1.1]{KT}.
To summarize,
we obtain

\begin{thm}[Theorem 5.7] \label{thm:FMPIntro}
Let
$X_0, X_0^\prime$
be derived-equivalent Calabi--Yau manifolds
of dimension more than two.
Then the generic fibers
$X_K, X^\prime_K$
of proper effectivizations
and that
$X_Q, X^\prime_Q$
of smooth projective versal deformations
are respectively derived-equivalent Calabi--Yau manifolds.
If,
in addition,
we have either
$\NS_{tor} X_0 \neq \NS_{tor} X^\prime_0$,
or
$\rho (X_0) = \rho (X^\prime_0) = 1$
and
$\deg (X_0) \neq \deg (X^\prime_0)$,
then
they are respectively nonbirational.
Similar results hold for the geometric generic fibers
$X_{\bar{K}}, X^\prime_{\bar{K}}$
and
$X_{\bar{Q}}, X^\prime_{\bar{Q}}$.
\end{thm}

Recall that
\emph{Fourier--Mukai partners}
are pairs of nonbirational Calabi--Yau threefolds
that are derived-equivalent.
They are of considerable interest from the viewpoint of
string theory
and
mirror symmetry.
For instance,
the Gross--Popescu pair
\cite{GP, Sch}
and
the Pfaffian--Grassmannian pair
\cite{BC, 0610957}
satisfy
the first
and
the second
conditions in
\pref{thm:FMPIntro}
respectively.
Thus we obtain new examples of Fourier--Mukai partners over the function fields
$K, Q$
and
their closures
$\bar{K}, \bar{Q}$.
Note that
if
$\bfk$
is a universal domain,
then there exists an isomorphism
$\bfk \cong \bar{Q}$
\cite[Lemma 2.1]{Via}.
In particular,
if
$\bfk = \bC$
then the new examples over
$\bar{Q}$
can be regarded as complex manifolds.

When
$X_0, X^\prime_0$
are the Pfaffian--Grassmannian pair,
we demonstrate the subtle difference between
$X_{\bar{Q}}, X^\prime_{\bar{Q}}$
and
known examples.
The geometric generic fibers
$X_{\bar{Q}}, X^\prime_{\bar{Q}}$
are respectively isomorphic to very general fibers as a scheme,
but not
as a $\bC$-variety.
For the same reason,
another known pair
$Y_0, Y^\prime_0$
over
$\bC$
called IMOU varieties cannot be isomorphic to 
$X_{\bar{Q}}, X^\prime_{\bar{Q}}$
as a $\bC$-variety.
It would be interesting
to study the relationship between Fourier--Mukai transforms on very general fibers induced by two different ways,
with
or
without
passing through the geometric generic fibers,
and
to construct nontrivial autoequivalences.

\begin{notations}
We work over an algebraically closed field
$\bfk$
of characteristic
$0$
throughout the paper.
Every time we apply
\cite[Lemma 2.1]{Via}
we always assume
$\bfk$
to be a universal domain without comments.
A $\bfk$-variety is an integral separated $\bfk$-scheme of finite type. 
A Calabi--Yau manifold
$X_0$
is a smooth projective $\bfk$-variety with
trivial canonical bundle
and
$\text{H}^i (X_0, \scrO_{X_0}) = 0$
for
$0 < i < \dim X_0$.
For a noetherian formal scheme
$\cX$
by
$D^b (\cX)$
we denote the bounded derived category of the abelian category
$\Coh (\cX)$
of coherent sheaves on
$\cX$. 
\end{notations}

\begin{acknowledgements}
The author would like to express his gratitude to Kazushi Ueda
for suggesting the problem studied in
\cite{Mor}.
The author would like to thank Paolo Stellari
for answering some questions.
The author also would like to thank
Evgeny Shinder
and
Nicolò Sibilla
for informing him on the paper
\cite{CP}.
Finally,
the author is deeply thankful to anonymous referees
who carefully read the earlier version,
pointed out some mistakes,
and
suggested many useful ideas.
This work was partially supported by
JSPS KAKENHI Grant Number
JP23KJ0341. 
\end{acknowledgements}

\section{The derived category of the generic fiber}
For a scheme
$X_R$
over an integral domain
$R$, 
we have the pullback diagram
\begin{align*}
\begin{gathered}
\xymatrix{
X_K \ar[r]^-{i} \ar_{\pi_K}[d] & X_R \ar^{\pi_R}[d] \\
\Spec K \ar[r]_-{j} & \Spec R ,
}
\end{gathered}
\end{align*}
where
$K$
is the field of fractions of
$R$
and
$X_K$
is the generic fiber,
i.e.,
the fiber over the generic point
$\xi \in \Spec R$.
In the sequel,
we assume the following conditions:
\begin{itemize}
\item[(i)]
$X_R$
is connected
and
smooth separated over
$R$.
\item[(ii)]
$\Spec R$
is a noetherian connected regular affine $\bfk$-scheme
whose closed points are
$\bfk$-rational.
\end{itemize}
The assumption guarantees that
$X_R$
and
$X_K$
are noetherian separated regular.
Indeed,
$\scrO_{X_R, x}$
is regular for every closed point
$x \in X_R$
by
\cite[Tag 031E]{SP}.
Under the assumption also
$X_K$
is connected.
An example of
$X_R$
we have in mind is proper effectivizations of miniversal formal families of a smooth projective $\bfk$-variety 
whose deformations are unobstructed.
Note that
in this case
$\Spec R$
is not of finite type over
$\bfk$.
We impose
(ii)
to include such examples,
besides smooth proper families over nonsingular affine $\bfk$-varieties.


We denote by
$\coh(X_R)_0 \subset \coh(X_R)$
the Serre subcategory spanned by $R$-torsion sheaves,
i.e.,
for each
$\scrF \in \coh(X_R)_0$
there is an element
$r \in R$
such that
$r \scrF = 0$.
We write
$\cC = \coh(X_R) / \coh(X_R)_0$
for the Serre quotient.
The natural projection
$p \colon \coh(X_R) \to \cC$
which sends
$\scrF$ to $\scrF_K$
is known to be exact.
By universality of Serre quotient,
the exact functor
\begin{align*}
(-) \otimes_R K \colon \coh (X_R) \to \coh (X_K)
\end{align*}
induces a unique exact functor
\begin{align*}
\Phi \colon \cC \to \coh (X_K)
\end{align*}
such that
$(-) \otimes_R K = \Phi \circ p$. 
Then $\Phi$ defines the derived functor
\begin{align*}
D^b(\cC) \to D^b(X_K),
\end{align*}
which induces a functor
\begin{align*}
\Psi \colon D^b(X_R) / D^b_0(X_R) \to D^b(X_K)
\end{align*}
via 
\cite[Theorem 3.2]{Miy}.
We show that
$\Phi$
and
$\Psi$
are equivalences.
In particular,
\begin{align} \label{eq:Miyachi}
D^b (\cC) \simeq D^b(X_R) / D^b_0(X_R)
\end{align}
gives an alternative description of
$D^b(X_K)$.

\subsection{$K$-linear categorical quotients}
\begin{lem}[cf. {\cite[Proposition 2.3]{HMS11}}] \label{lem:Klin}
The abelian category
$\cC$
is $K$-linear
and
for all
$\scrE, \scrF \in \coh(X_R)$
the natural projection
$p \colon \coh(X_R) \to \cC$
induces a $K$-linear isomorphism
\begin{align} \label{eq:Klin}
\Hom_{X_R}(\scrE, \scrF) \otimes_R K
\cong
\Hom_{\cC}(\scrE_K, \scrF_K).
\end{align}
\end{lem}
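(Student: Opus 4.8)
The plan is to adapt the proof of \cite[Proposition 2.3]{HMS11}, the only genuine change being that $R$ is an arbitrary noetherian regular domain rather than a discrete valuation ring, so one must invert the whole multiplicative set $R\setminus\{0\}$ at once instead of a single uniformizer; noetherianity of $X_R$ is what keeps this manageable. First I would produce the $K$-linear structure. Since $\coh(X_R)$ is $R$-linear and $\coh(X_R)_0$ is a Serre subcategory, the quotient $\cC$ inherits an $R$-linear structure, i.e. a ring homomorphism from $R$ to the center of $\cC$ (the ring of natural endomorphisms of $\id_\cC$). Every object of $\cC$ has the form $p(\scrF)$, and multiplication by $r\in R\setminus\{0\}$ on $p(\scrF)$ is $p$ applied to $r\cdot\colon\scrF\to\scrF$, whose kernel and cokernel are annihilated by $r$ and hence lie in $\coh(X_R)_0$; therefore $r\cdot\colon p(\scrF)\to p(\scrF)$ is an isomorphism. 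By the universal property of localization the structure map factors through $K=(R\setminus\{0\})^{-1}R$, giving $\cC$ a $K$-linear structure.

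Next I would set up the comparison map. The projection $p$ induces an $R$-linear map $\Hom_{X_R}(\scrE,\scrF)\to\Hom_\cC(\scrE_K,\scrF_K)$ whose target is a $K$-vector space, so it factors through a $K$-linear map $\theta\colon\Hom_{X_R}(\scrE,\scrF)\otimes_R K\to\Hom_\cC(\scrE_K,\scrF_K)$; it remains to show $\theta$ is bijective. Every element of the source has the form $\phi\otimes r^{-1}$ with $\phi\in\Hom_{X_R}(\scrE,\scrF)$ and $r\in R\setminus\{0\}$. For injectivity, if $\theta(\phi\otimes r^{-1})=r^{-1}p(\phi)=0$ then $p(\phi)=0$ (multiply by the invertible $r$), so $\im\phi\in\coh(X_R)_0$ is killed by some $s\in R\setminus\{0\}$, whence $s\phi=0$ in $\Hom_{X_R}(\scrE,\scrF)$ and $\phi\otimes r^{-1}=(s\phi)\otimes(rs)^{-1}=0$.

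For surjectivity I would invoke the standard filtered-colimit description of morphisms in a Serre quotient: a class in $\Hom_\cC(\scrE_K,\scrF_K)$ is represented by some morphism $f\colon\scrE'\to\scrF/\scrF''$ of $\coh(X_R)$ with $\scrE/\scrE',\scrF''\in\coh(X_R)_0$. Because $X_R$ is noetherian these two sheaves are coherent and hence annihilated by single elements $r,s\in R\setminus\{0\}$; in particular $r\scrE\subseteq\scrE'$, so multiplication by $r$ factors as $\scrE\xrightarrow{\rho}\scrE'\hookrightarrow\scrE$, and since $\scrF''$ is killed by $s$ multiplication by $s$ on $\scrF$ factors through $\scrF\to\scrF/\scrF''$ via some $\sigma\colon\scrF/\scrF''\to\scrF$. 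Setting $\tilde f=\sigma\circ f\circ\rho\in\Hom_{X_R}(\scrE,\scrF)$ and unwinding the identifications (using that $p$ of the inclusion $\scrE'\hookrightarrow\scrE$ and of the projection $\scrF\to\scrF/\scrF''$ are the isomorphisms implicit in the colimit formula), one obtains $p(\tilde f)=(rs)\cdot\psi$, hence $\psi=\theta\bigl(\tilde f\otimes(rs)^{-1}\bigr)$. Together with injectivity this yields the isomorphism \eqref{eq:Klin}, and its $K$-linearity is built into the construction of $\theta$.

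I expect the surjectivity step to be the only real obstacle: the bookkeeping in clearing denominators is routine but must be carried out carefully, and it is precisely here that the noetherian hypothesis on $X_R$ is essential, since it guarantees that the relevant torsion subsheaves are annihilated by a single element of $R$ rather than only locally, which is what lets the filtered colimit collapse onto the localization $\Hom_{X_R}(\scrE,\scrF)\otimes_R K$.
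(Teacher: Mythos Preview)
Your proposal is correct and follows essentially the same strategy as the paper: establish $K$-linearity, then prove injectivity and surjectivity of the comparison map by clearing denominators using the single annihilators that exist by definition of $\coh(X_R)_0$. The only cosmetic difference is that the paper works with the roof-diagram description $\scrE \xleftarrow{s_0} \scrE_0 \xrightarrow{g} \scrF$ of morphisms in the Serre quotient (and defines $r^{-1}$ explicitly on such roofs), whereas you use Gabriel's filtered-colimit description $f\colon \scrE' \to \scrF/\scrF''$ and obtain $K$-linearity via the universal property of localization; the resulting surjectivity computations are parallel, with your $(r,s)$ playing the role of the paper's $(r_2, r_1)$.
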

\begin{proof}
One can adapt the proof of
\cite[Proposition 2.3]{HMS11}
in a straightforward way. 
\end{proof}

\begin{lem}[cf. {\cite[Proposition 2.9]{HMS11}}] \label{lem:VKlin}
The triangulated category
$D^b (\cC)$
is $K$-linear
and
for all
$E, F \in D^b(X_R)$
the natural projection
$Q \colon D^b(X_R) \to D^b (\cC)$
induces a $K$-linear isomorphism
\begin{align*}
\Hom_{D^b(X_R)} \left( E, F \right) \otimes_R K
\cong
\Hom_{D^b (\cC)} \left( E_K, F_K \right).
\end{align*}
\end{lem}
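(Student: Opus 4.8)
The plan is to mimic the strategy of \pref{lem:Klin}, now at the level of derived categories and Hom-complexes rather than abelian categories and Hom-sets. Recall from \cite[Theorem 3.2]{Miy} that a morphism in the Verdier quotient $D^b(\cC) = D^b(X_R)/D^b_0(X_R)$ from $E_K$ to $F_K$ is represented by a roof $E \xleftarrow{s} E' \xrightarrow{g} F$ with $\Cone(s) \in D^b_0(X_R)$, two such roofs being equivalent if they are dominated by a third. First I would exhibit the $R$-module structure on $\Hom_{D^b(\cC)}(E_K,F_K)$ by defining, for $r \in R$, the action $r^{-1}\colon (E \xleftarrow{s} E' \xrightarrow{g} F) \mapsto (E \xleftarrow{rs} E' \xrightarrow{g} F)$, noting that $\Cone(rs)$ still lies in $D^b_0(X_R)$ because multiplication by $r$ is an endomorphism of the triangle whose cone has $R$-torsion cohomology; the identity $r(r^{-1}f)=f$ follows from the evident morphism of roofs given by $r\cdot\id$ on $E'$, exactly as in the diagram in \pref{lem:Klin}. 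This makes $D^b(\cC)$ a $K$-linear triangulated category.

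Next I would analyze the natural $K$-linear map
\begin{align*}
\eta_K \colon \Hom_{D^b(X_R)}(E,F) \otimes_R K \to \Hom_{D^b(\cC)}(E_K,F_K).
\end{align*}
For injectivity, suppose $\eta_K(f) = 0$ for $f\colon E \to F$ in $D^b(X_R)$. Then there is a roof $E \xleftarrow{s} E' \xrightarrow{0} F$ equivalent to $E \xleftarrow{\id} E \xrightarrow{f} F$; comparing them via a dominating roof, one sees that $f$ factors through $\Cone(s)[-1]$ or equivalently through an object $C \in D^b_0(X_R)$, say $f = b \circ a$ with $a\colon E \to C$ and $C$ having $R$-torsion cohomology. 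Since $C$ is a bounded complex with finitely many nonzero cohomology sheaves each annihilated by some element of $R$, there is a single $r \in R$ with $r \cdot \id_C = 0$ in $D^b(X_R)$ — this uses that $\Hom$ in the derived category can be computed via the spectral sequence with $E_2$-terms $\Ext$ between cohomology sheaves, or more simply a dévissage on the (finite) length of $C$ — and hence $rf = b \circ (r\,\id_C) \circ a = 0$, so $f \otimes 1 = 0$.

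For surjectivity I would take an arbitrary class represented by $E \xleftarrow{s} E' \xrightarrow{g} F$ with $C := \Cone(s) \in D^b_0(X_R)$, choose $r \in R$ with $r\cdot\id_C = 0$ as above, and observe that the composite $E' \xrightarrow{s} E \to C$ is then killed by $r$ after precomposition is arranged: more precisely, from the triangle $E' \xrightarrow{s} E \xrightarrow{w} C \to E'[1]$ one gets that $r\cdot w = 0$, so $r\cdot\id_E\colon E \to E$ factors through $s$, i.e.\ there is $t\colon E \to E'$ with $s \circ t = r\cdot\id_E$. Then $g \circ t \colon E \to F$ is an honest morphism in $D^b(X_R)$, and the roof $E \xleftarrow{s} E' \xrightarrow{g} F$ composed with $E \xleftarrow{\id} E \xrightarrow{t} E'$ shows $\eta_K(g\circ t) = r \cdot [\text{given class}]$ in $\Hom_{D^b(\cC)}(E_K,F_K)$; since $r$ is invertible there, the class is in the image of $\eta_K$. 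I expect the main obstacle to be the surjectivity step, specifically the careful bookkeeping that the element $r$ annihilating $\id_C$ really exists for a bounded complex with $R$-torsion cohomology (a short dévissage on cohomological amplitude, using that $\coh(X_R)_0$ is a Serre subcategory closed under extensions and that each cohomology sheaf is individually $R$-torsion) and that one may convert the roof into an actual morphism after multiplying by $r$; once that is in hand, the compatibility of $\eta_K$ with the $K$-linear structures and the conclusion that $\Psi$ (hence also $\Phi$, via \eqref{eq:Miyachi} and \cite[Theorem 3.2]{Miy}) is fully faithful follow formally, and essential surjectivity is handled separately in the proof of \pref{thm:esurjIntro}.
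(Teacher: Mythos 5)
Your proof is correct, but it takes a genuinely different route from the paper's. The paper's argument for \pref{lem:VKlin} is a two-line shortcut: the $K$-linearity of $D^b(\cC)$ is deduced by representing a morphism in $D^b(\cC)$ as a morphism of bounded complexes of objects of $\cC$ and invoking \pref{lem:Klin} termwise, and the Hom isomorphism is then simply read off from Corollary~\pref{cor:CGF} (the equivalence $\Psi \colon D^b(X_R)/D^b_0(X_R) \to D^b(X_K)$, whose proof runs through \pref{prop:GFeq} and \pref{thm:esurj} and is independent of the present lemma, as the paper is careful to note). You instead give a self-contained Verdier-quotient computation that parallels the abelian argument in \pref{lem:Klin}: you exhibit the inverse action of $r$ by rescaling the denominator of a roof, you prove injectivity of $\eta_K$ by factoring $f$ through the cone $C$ of the denominator and using that $C \in D^b_0(X_R)$ has $r\cdot\id_C = 0$, and you prove surjectivity by producing a section $t$ of $s$ up to multiplication by $r$ and checking the roof identity $\eta_K(g\circ t) = r\cdot[\text{class}]$.

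Your one real piece of extra work — the dévissage showing a bounded complex $C$ with $R$-torsion cohomology admits a single $r \in R$ with $r\cdot\id_C = 0$ — is necessary for your approach, is not used by the paper, and does go through: truncate $C$ by cohomological degree, observe that for the triangle $\tau_{\le b-1}C \to C \to \cH^b(C)[-b] \to$ the class $s\cdot\id_C$ (with $s$ annihilating $\tau_{\le b-1}C$) lifts against $\Hom(\cH^b(C)[-b], C)$, and then a further multiplication by an annihilator $t$ of $\cH^b(C)$ kills it. The trade-off is thus: the paper's route is shorter but leans on the later Corollary~\pref{cor:CGF} (with an explicit non-circularity caveat), whereas your route is longer but establishes the Hom isomorphism entirely within the Verdier-quotient formalism, in the same spirit as the original \cite[Proposition 2.9]{HMS11} which the lemma is modeled on, and would stand even without the geometric input of Section~\ref{sec:1.2}. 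Two small imprecisions that do not affect correctness: morphisms in a derived/Verdier quotient are equivalence classes of roofs rather than "morphisms of bounded complexes," and in the injectivity step $f$ factors through $\Cone(s)$ via the connecting map $w\colon E \to \Cone(s)$ of the triangle (not through $\Cone(s)[-1]$) — but your parenthetical "or equivalently through an object $C \in D^b_0(X_R)$" recovers the correct statement.
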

\begin{proof}
Any morphism in
$D^b (\cC)$
can be represented by a morphism of bounded complexes of objects in
$\cC$,
which is a collection of morphisms in
$\cC$
compatible with the differentials.
Since by
\pref{lem:Klin}
both
the morphisms
and
the differentials
are $K$-linear,
the representative is also $K$-linear.
The $K$-linear isomorphism is a direct consequence of Corollary
\pref{cor:CGF},
whose proof does not rely on it.
\end{proof}

\begin{rmk}
In
\cite{HMS11}
only the case where
$R = \bfk \llbracket t \rrbracket$
was treated.
This is because some results there require
$R$
to be a DVR.
Throughout the paper,
we are free from the requirement and results which rely on it.
\end{rmk}

\subsection{Canonical functor from the Serre quotient}
\begin{prop} \label{prop:GFeq}
The functor
$\Phi \colon \cC \to \coh (X_K)$
is fully faithful.
\end{prop}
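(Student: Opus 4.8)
The plan is to deduce full faithfulness from \pref{lem:Klin} together with flat base change along the cartesian square at the beginning of this section. Since the projection $p$ is the identity on objects, every object of $\cC$ is of the form $p(\scrF) = \scrF_K$ with $\scrF \in \coh(X_R)$, so it suffices to check that for all $\scrE, \scrF \in \coh(X_R)$ the map
\begin{align*}
\Hom_\cC(\scrE_K, \scrF_K) \longrightarrow \Hom_{X_K}(\scrE \otimes_R K, \scrF \otimes_R K)
\end{align*}
induced by $\Phi$ is bijective. By \pref{lem:Klin} the source is canonically $\Hom_{X_R}(\scrE, \scrF) \otimes_R K$, and composing with that identification the displayed map unwinds (using $\Phi \circ p = (-) \otimes_R K = i^*$) to the evident base-change morphism $\varphi \otimes \lambda \mapsto \lambda \cdot i^*\varphi$. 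Thus the proposition reduces to showing that this base-change morphism is an isomorphism for all coherent $\scrE, \scrF$.

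To prove that, I would pass to sheaf $\mathcal{H}om$. On the noetherian scheme $X_R$ the sheaf $\scrE$ admits, locally, a two-term presentation $\scrO_{X_R}^{\oplus m} \to \scrO_{X_R}^{\oplus n} \to \scrE \to 0$; since $i$ is flat, hence $i^*$ exact, while $\mathcal{H}om(-, \scrF)$ is left exact, comparing the resulting left-exact sequences (the finite-free case being trivial) shows that the canonical morphism $i^*\mathcal{H}om_{X_R}(\scrE, \scrF) \to \mathcal{H}om_{X_K}(i^*\scrE, i^*\scrF)$ is an isomorphism. It then remains to identify $\Gamma(X_K, i^*\scrG)$ with $\Gamma(X_R, \scrG) \otimes_R K$ for a quasi-coherent $\scrG$ on $X_R$. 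Here $\pi_R$ is quasi-compact and quasi-separated (as $X_R$ is noetherian and separated) and $j \colon \Spec K \to \Spec R$ is flat, so flat base change gives $j^*\pi_{R*}\scrG \xrightarrow{\sim} \pi_{K*}i^*\scrG$; taking sections over the affine scheme $\Spec K$ and using $\Gamma(\Spec K, j^*(-)) = \Gamma(\Spec R, -) \otimes_R K$ yields $\Gamma(X_K, i^*\scrG) \cong \Gamma(X_R, \scrG) \otimes_R K$. Chaining these isomorphisms with $\scrG = \mathcal{H}om_{X_R}(\scrE,\scrF)$ recovers the base-change morphism above, so it is bijective.

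The one point requiring care is that $j$ is flat but \emph{not} of finite type, so the finite-type versions of "compatibility with base change" do not apply directly; one must invoke flat base change in its quasi-compact–quasi-separated form (which carries no finiteness hypothesis on the base). Equivalently, one may write $K = \varinjlim_{0 \neq f \in R} R_f$ and work with the open subschemes $X_{R_f} = \pi_R^{-1}(D(f)) \subset X_R$: there $\Gamma(X_{R_f}, \scrG|_{X_{R_f}}) = \Gamma(X_R, \scrG) \otimes_R R_f$ and $\mathcal{H}om$ commutes with restriction, and since $X_K = \varprojlim_f X_{R_f}$ with affine (open-immersion) transition maps, the standard limit theorem for morphisms of finitely presented modules gives $\Hom_{X_K}(i^*\scrE, i^*\scrF) = \varinjlim_f \Hom_{X_{R_f}}(\scrE|_{X_{R_f}}, \scrF|_{X_{R_f}}) = \Hom_{X_R}(\scrE, \scrF) \otimes_R K$. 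I expect this bookkeeping around non-finite-type flat base change to be the main obstacle; the remaining verifications are formal.
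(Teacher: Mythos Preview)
Your proof is correct and follows essentially the same route as the paper: reduce to showing $\Hom_{X_R}(\scrE,\scrF)\otimes_R K \cong \Hom_{X_K}(i^*\scrE,i^*\scrF)$ via \pref{lem:Klin}, then use compatibility of sheaf $\underline{\Hom}$ with the flat pullback $i^*$ together with flat base change along the square to finish. The paper isolates the step $\Gamma(\Spec K, j^*\pi_{R*}\scrG)\cong \Gamma(\Spec R,\pi_{R*}\scrG)\otimes_R K$ as a separate \pref{lem:stalk}, whereas you absorb it directly into your flat base change and global-sections argument; your additional care about the qcqs form of flat base change and the alternative colimit description $K=\varinjlim R_f$ are more than the paper spells out, but the skeleton is the same.
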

\begin{proof}
The images of
$\scrE_K, \scrF_K \in \cC$
by
$\Phi$
are respectively isomorphic to the pullbacks
$i^* \scrE, i^* \scrF$
of some coherent sheaves
$\scrE, \scrF$
on
$X_R$.
We have
\begin{align*}
\Hom_{X_K} \left( \Phi( \scrE_K), \Phi (\scrF_K \right)
&=
\Hom_{X_K} \left( i^* \scrE, i^* \scrF \right) \\
&\cong
\Gamma \circ i^* \underline{\Hom}_{X_R} \left( \scrE, \scrF \right) \\
&\cong
j^* \circ (\pi_R)_* \underline{\Hom}_{X_R} \left( \scrE, \scrF \right) \\
&\cong
\Hom_{X_R} \left( \scrE, \scrF \right) \otimes_R K \\
&\cong
\Hom_{\cC} \left( \scrE_K, \scrF_K \right),
\end{align*}
where the second, the third, and the fourth isomorphisms follow from 
flat base change,
\pref{lem:stalk}
below,
and
\pref{lem:Klin}
respectively.
\end{proof}

\begin{lem} \label{lem:stalk}
For all
$\scrE, \scrF \in \coh (X_R)$
we have an isomorphism
\begin{align*}
(\pi_R)_* \underline{\Hom}_{X_R} \left( \scrE, \scrF \right) \otimes_R K
\cong
\Hom_{X_R} \left( \scrE, \scrF \right) \otimes_R K .
\end{align*}
\end{lem}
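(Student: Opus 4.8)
The plan is to reduce the statement to the classical compatibility of $\underline{\Hom}$ with flat base change on the level of global sections. Since $\Spec R \hookrightarrow \Spec K$ is flat (localization at the generic point), tensoring with $K$ over $R$ is exact. First I would recall that for $\scrE$ coherent and $\scrF$ quasi-coherent on the noetherian scheme $X_R$, the sheaf $\underline{\Hom}_{X_R}(\scrE,\scrF)$ is quasi-coherent, and its formation is local. Applying $(\pi_R)_*$ and then $(-)\otimes_R K$, the key point is the identity
\begin{align*}
\Gamma\bigl(X_R, \underline{\Hom}_{X_R}(\scrE,\scrF)\bigr) \otimes_R K
\cong
\Hom_{X_R}(\scrE,\scrF) \otimes_R K,
\end{align*}
which holds tautologically because $\Gamma(X_R, \underline{\Hom}_{X_R}(\scrE,\scrF)) = \Hom_{X_R}(\scrE,\scrF)$ by definition of the sheaf $\underline{\Hom}$; so there is in fact nothing to prove once one unwinds the notation. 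The only substantive content is the equality $(\pi_R)_* \underline{\Hom}_{X_R}(\scrE,\scrF) = \Hom_{X_R}(\scrE,\scrF)$, viewed as an $R$-module via $\Gamma(\Spec R, -)$, which is immediate from $\Spec R$ being affine and $(\pi_R)_*$ being the pushforward to $\Spec R$.

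More precisely, I would argue as follows. Since $\Spec R$ is affine, $(\pi_R)_*\scrG$ for a quasi-coherent sheaf $\scrG$ on $X_R$ is simply the quasi-coherent sheaf on $\Spec R$ associated to the $R$-module $\Gamma(X_R,\scrG)$. Taking $\scrG = \underline{\Hom}_{X_R}(\scrE,\scrF)$, which is quasi-coherent since $\scrE$ is coherent and $X_R$ is noetherian, we get that $(\pi_R)_* \underline{\Hom}_{X_R}(\scrE,\scrF)$ corresponds to the $R$-module $\Gamma(X_R, \underline{\Hom}_{X_R}(\scrE,\scrF)) = \Hom_{X_R}(\scrE,\scrF)$. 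The functor $j^* = (-)\otimes_R K$ on quasi-coherent sheaves on $\Spec R$ corresponds to tensoring the associated module with $K$ over $R$, so $(\pi_R)_* \underline{\Hom}_{X_R}(\scrE,\scrF)\otimes_R K$ corresponds to the $K$-module $\Hom_{X_R}(\scrE,\scrF)\otimes_R K$. Identifying quasi-coherent sheaves on $\Spec K$ with $K$-modules gives the claimed isomorphism, and one checks readily that it is the natural one, functorial in $\scrE$ and $\scrF$.

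There is essentially no obstacle: the lemma is a bookkeeping statement recording the identification $\Gamma\circ\underline{\Hom} = \Hom$ together with the affineness of the base, inserted so that the chain of isomorphisms in the proof of \pref{prop:GFeq} can be read off cleanly. The only point that requires a word of care is the quasi-coherence of $\underline{\Hom}_{X_R}(\scrE,\scrF)$, for which coherence of $\scrE$ and the noetherian hypothesis on $X_R$ (both guaranteed by our standing assumptions) are exactly what is needed; no finiteness on $\scrF$ or properness of $\pi_R$ enters here, those being used only elsewhere to control higher direct images.
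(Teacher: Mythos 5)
Your proof is correct and reaches the same conclusion by essentially the same mechanism — identify the left-hand side with the stalk/localization at the generic point and show it equals $\Hom_{X_R}(\scrE,\scrF)\otimes_R K$ — but you take a cleaner, more abstract route than the paper. The paper's proof rederives the relevant fact by hand: it writes a germ $\langle D(f), s\rangle$ of $(\pi_R)_*\underline{\Hom}_{X_R}(\scrE,\scrF)$ at $\xi$, invokes \cite[Lemma II5.3]{Har} to make $f^n s$ into a global section, and then explicitly constructs mutually inverse maps to and from $\Hom_{X_R}(\scrE,\scrF)\otimes_R K$. You instead cite the equivalence between quasi-coherent sheaves on $\Spec R$ and $R$-modules (so $(\pi_R)_*\underline{\Hom}_{X_R}(\scrE,\scrF)\cong \widetilde{M}$ with $M=\Hom_{X_R}(\scrE,\scrF)$) together with compatibility of this correspondence with $\otimes_R K$, which packages the paper's explicit localization computation into a single standard fact. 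Both proofs need exactly the same hypotheses: quasi-coherence of $\underline{\Hom}_{X_R}(\scrE,\scrF)$ (from coherence of $\scrE$ and $X_R$ noetherian) and quasi-coherence of the pushforward (from $\pi_R$ being quasi-compact quasi-separated), and you flag both correctly. One small slip: you write ``$\Spec R\hookrightarrow \Spec K$ is flat''; the morphism of course goes the other way, $\Spec K\to\Spec R$, and it is flat but not an immersion. This does not affect the substance of the argument, which you do not even use — flatness of $R\to K$ plays no role in this lemma, only in the subsequent flat base-change step in the proof of \pref{prop:GFeq}.
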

\begin{proof}
We may consider
$(\pi_R)_* \underline{\Hom}_{X_R} \left( \scrE, \scrF \right) \otimes_R K$
as the stalk of the sheaf
$(\pi_R)_* \underline{\Hom}_{X_R} \left( \scrE, \scrF \right)$
at the generic point
$\xi$
of
$\Spec R$.
For an affine open cover
$\Spec R = \{ D(f) \}, \ 0 \neq f \in R$,
take any germ
$\langle D(f), s \rangle$ of $(\pi_R)_* \underline{\Hom}_{X_R} \left( \scrE, \scrF \right)_\xi$.
Since
$(\pi_R)_* \underline{\Hom}_{X_R} \left( \scrE, \scrF \right)$
is a quasi-coherent sheaf on an affine scheme
as
$X_R$
is noetherian,
by
\cite[Lemma II5.3]{Har}
there exists an integer
$n \geq 0$
such that
$f^n s$
becomes a global section.
Let 
\begin{align*}
\phi \colon (\pi_R)_* \underline{\Hom}_{X_R} \left( \scrE, \scrF \right)_\xi
\to
\Hom_{X_R} \left( \scrE, \scrF \right) \otimes_R K
\end{align*}
be the homomorphism of $R$-algebras
which sends
$\langle D(f), s \rangle$
to
$f^m s \otimes (1/f^m)$,
where
$m$
is the minimum integer
such that
$f^m s$
becomes a global section. 
One can check that this is well-defined.
The inverse
$\phi^{-1}$
is given by the map
which sends
$v \otimes(g/f)$
to
$\langle D(f), (gv/f) \rangle$
for
$v \in \Hom_{X_R} \left( \scrE, \scrF \right)$
and
$g \in R$. 
\end{proof}

\begin{thm} \label{thm:esurj}
The functor
$\Phi \colon \cC \to \coh (X_K)$
is a $K$-linear equivalence of abelian categories.
\end{thm}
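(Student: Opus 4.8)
The plan is to combine \pref{prop:GFeq}, which already gives that $\Phi$ is fully faithful, with a separate verification that $\Phi$ is essentially surjective; $K$-linearity of $\Phi$ is immediate from its construction as the descent of $(-)\otimes_R K$. Since every object of $\cC$ is of the form $\scrF_K$ for some $\scrF\in\coh(X_R)$ and $\Phi(\scrF_K)=i^*\scrF$, essential surjectivity amounts to the claim that every coherent sheaf on $X_K$ is isomorphic to the restriction $i^*\scrF$ of a coherent sheaf on $X_R$.

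First I would record the basic properties of $i\colon X_K\to X_R$. The morphism $\Spec K\to\Spec R$ is the spectrum of the localization $R\hookrightarrow K=(R\setminus\{0\})^{-1}R$, hence flat and affine, and $i\colon X_K=X_R\times_{\Spec R}\Spec K\to X_R$ is obtained from it by base change, so it too is flat and affine. Consequently $i^*$ is exact and commutes with arbitrary colimits, $i_*$ is exact and carries quasi-coherent sheaves to quasi-coherent sheaves, and --- this is the geometric heart of the matter --- the counit $i^*i_*\scrG\to\scrG$ is an isomorphism for every quasi-coherent $\scrG$ on $X_K$. The last point is local and reduces to the ring-theoretic fact that, for an $R$-algebra $A$ and $B=A\otimes_R K$, every $B$-module $M$ satisfies $M\otimes_A B\cong M$; this follows from $B\otimes_A B\cong A\otimes_R(K\otimes_R K)\cong A\otimes_R K=B$, using $K\otimes_R K\cong K$.

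Now let $\scrG\in\coh(X_K)$. Since $X_R$ is noetherian, the quasi-coherent sheaf $i_*\scrG$ is the filtered union of its coherent subsheaves $\scrF_\alpha\in\coh(X_R)$. Applying the exact, colimit-preserving functor $i^*$ and invoking the counit isomorphism gives $\scrG\cong i^*i_*\scrG\cong\varinjlim_\alpha i^*\scrF_\alpha$, and because the transition maps $\scrF_\alpha\hookrightarrow\scrF_\beta$ are monomorphisms, so are the $i^*\scrF_\alpha\hookrightarrow i^*\scrF_\beta$; thus $\scrG$ is the directed union of the coherent subsheaves $i^*\scrF_\alpha\subseteq\scrG$. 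Since $X_K$ is noetherian, $\scrG$ satisfies the ascending chain condition on coherent subsheaves, so this directed family has a maximal member $i^*\scrF_{\alpha_0}$, which by directedness must equal $\scrG$. Hence $\scrG\cong i^*\scrF_{\alpha_0}=\Phi\big((\scrF_{\alpha_0})_K\big)$, and $\Phi$, being fully faithful, $K$-linear, and essentially surjective, is a $K$-linear equivalence of abelian categories.

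I expect the only real subtlety to be the counit isomorphism $i^*i_*\scrG\cong\scrG$: it is precisely where one uses that $X_K$ is the generic fibre (a localization of $X_R$) rather than an arbitrary base change, and it is what makes ``coherent sheaves on $X_K$ come from $X_R$'' true; everything else is routine noetherian bookkeeping. As an alternative to the chain-condition endgame --- convenient if one prefers not to manipulate subsheaves directly --- one can observe that $\scrG$, being finitely presented, is such that $\Hom_{X_K}(\scrG,-)$ commutes with filtered colimits of quasi-coherent sheaves, so $\id_\scrG$ factors through some $i^*\scrF_\alpha$; this exhibits $\scrG$ as a retract of an object in the image of the fully faithful functor $\Phi$ between the idempotent-complete (indeed abelian) categories $\cC$ and $\coh(X_K)$, and such a retract again lies in the essential image.
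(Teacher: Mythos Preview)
Your argument is correct and takes a genuinely different route from the paper. The paper proves essential surjectivity by first invoking regularity of $X_K$ to write an arbitrary coherent sheaf as a cokernel of a map of locally free sheaves of finite rank, thereby reducing to the locally free case; it then constructs a lift of a locally free $\scrF_\xi$ by hand, trivializing on an affine cover of $X_R$, lifting the transition functions via \pref{lem:Klin}, shrinking $\Spec R$ to an open where the denominators become units so the lifts satisfy the cocycle condition, and finally extending the resulting sheaf back to all of $X_R$ using \cite[Exercise II5.15]{Har}. Your approach sidesteps all of this by exploiting that $i\colon X_K\to X_R$ is a localization (so $i^*i_*\cong\id$), combined with the standard fact that quasi-coherent sheaves on a noetherian scheme are filtered unions of coherent subsheaves; noetherianity of $X_K$ then finishes the job. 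The trade-off: the paper's argument is more explicit and constructive, while yours is cleaner, avoids the gluing bookkeeping, and in fact does not use regularity of $X_K$ at all---so it establishes the essential surjectivity of $\Phi$ under weaker hypotheses than those stated (only the noetherian hypotheses on $X_R$ and $X_K$ are needed for this half).
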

\begin{proof}
It suffices to show that
$\Phi$
is essentially surjective.
By assumption
$X_K$
is connected.
Let
$\scrF_\xi$
be an object of
$\coh (X_K)$.
Since
$X_K$
is noetherian integral separated regular,
by
\cite[Exercise III6.8]{Har}
any coherent sheaf on
$X_K$
can be obtained as the cokernel of a morphism of locally free sheaves of finite rank.
The essential image of
$\Phi$
is a full abelian subcategory of
$\coh (X_K)$.
In particular,
it is closed under taking cokernels.
Hence we may assume
$\scrF_\xi$
to be a locally free sheaf of finite rank.

Take an affine open cover
$\{ U_i \} ^m_{i=1}, U_i = \Spec A_i$
of
$X_R$
such that
the restriction of
$\scrF_\xi$
to each affine open subset
$V_i = U_i \times_R K$
of
$X_K$
is isomorphic to a finite rank free
$\tilde{B}_i = \tilde{A}_i \otimes_R K$-module  
\begin{align*}
F_i 
=
\tilde{B}^{\oplus N}_i.
\end{align*}  
Let
$\phi_{ij}
=
\phi_i \circ \phi^{-1}_j
\colon
F_j |_{V_{ij}}
\to
F_i |_{V_{ij}}$
be isomorphisms on
$V_{ij} = V_i \cap V_j$
where
$\phi_i \colon F_\xi |_{V_i} \to F_i$
are trivializations with their inverses
$\phi^{-1}_i \colon F_i \to F_\xi |_{V_i}$.
In other words,
we have the commutative diagrams
\begin{align*}
\begin{gathered}
\xymatrix{
F_\xi |_{V_{ij}} \ar@{=}[r] & F_\xi |_{V_{ij}} \ar^{\phi_i}[d] \\
F_j |_{V_{ij}} \ar[r]_-{\phi_{ij}} \ar^{\phi^{-1}_j}[u] & F_i |_{V_{ij}}.
}
\end{gathered}
\end{align*}

From
$F_i$
we obtain a rank
$N$
free $\tilde{A}_i$-module
\begin{align*}
E_i
=
\tilde{A}^{\oplus N}_i
\end{align*}  
with the same generators.
By construction tensoring
$K$
with
$E_i$
recovers
$F_i$.
Now,
up to shrinking the base
$\Spec R$,
we glue
$E_i$
to construct a coherent sheaf 
$\scrE$
on
$X_R$
such that
$\scrE \otimes_R K \cong \scrF_\xi$.
By
\pref{lem:Klin}
there are lifts 
$\bar{\phi}_{ij} \colon E_j |_{U_{ij}} \to E_i |_{U_{ij}}$
on
$U_{ij} = U_i \cap U_j$
of
$\phi_{ij}$
along
\pref{eq:Klin}.
Namely,
we have
\begin{align*}
\bar{\phi}_{ij} \otimes_R 1 / r_{ij}
=
\phi_{ij}
\end{align*}
for some
$r_{ij} \in R$.

Consider the affine open subset
\begin{align*}
\Spec T
\subset
\Spec R
\end{align*}
defined by
$r_{ij} \neq 0, 1 \leq i, j \leq m$.
On the base changes
$U_{ij, T} = U_{ij} \times_R T$
all
$r_{ij}$
become invertible.
Hence
$\phi_{ij}$
canonically lift to isomorphisms 
\begin{align*}
r^{-1}_{ij} \bar{\phi}_{ij}
\colon
E_j |_{U_{ij, T}}
\to
E_i |_{U_{ij, T}}
\end{align*}
which injectively map to
$r^{-1}_{ij} \bar{\phi}_{ij} \otimes_T 1 = \phi_{ij}$
under
\begin{align*}
\Hom_{U_{ij, T}}(E_j |_{U_{ij, T}}, E_i |_{U_{ij, T}})
\xrightarrow{- \otimes_T K}
\Hom_{U_{ij, T}}(E_j |_{U_{ij, T}}, E_i |_{U_{ij, T}}) \otimes_T K
\cong
\Hom_{\cC}((E_j)_K, (E_j)_K).
\end{align*}
Clearly,
the lifts satisfy the cocycle condition.
Thus
$E_i |_{U_{i, T}}$
glue to yield a locally free sheaf
$\tilde{\scrE}$
on
$X_T = X_R \times_R T$
such that
$\tilde{\scrE} \otimes_T K \cong \scrF_\xi$.

By
\cite[Exercise II5.15]{Har}
the lift
$\tilde{\scrE}$
extends to a coherent sheaf
$\scrE$
on
$X_R$.
Since the exact functor
$(-) \otimes_R K$
factorizes through
\begin{align*}
\coh(X_R)
\to
\coh(X_T)
\to
\coh(X_K)
\end{align*}
and
it sends
$\scrE$
to
$\scrF_\xi$,
there is an object
$\scrE_K \in \cC$
which maps to
$\scrF_\xi$
under
$\Phi$.
\end{proof}

\subsection{Canonical functor from the Verdier quotient}
As the functor
$\Phi \colon \cC \to \coh (X_K)$ 
is exact,
termwise application of
$\Phi$
defines a derived functor
$D^b (\cC) \to D^b(X_K)$.
By universality of Verdier quotient,
the induced functor
\begin{align*}
\Psi \colon D^b(X_R) / D^b_0(X_R) \to D^b(X_K)
\end{align*}
by
\pref{eq:Miyachi}
coincides with
$D^b (\cC) \to D^b(X_K)$.
From
\pref{thm:esurj}
we obtain

\begin{cor} \label{cor:CGF}
The functor
$\Psi \colon D^b(X_R) / D^b_0(X_R) \to D^b(X_K)$
is a $K$-linear exact equivalence.
\end{cor}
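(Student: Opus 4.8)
The goal is to deduce Corollary \ref{cor:CGF} from the abelian-category equivalence $\Phi \colon \cC \xrightarrow{\sim} \coh(X_K)$ established in Theorem \ref{thm:esurj}. The key observation is that passing to bounded derived categories is functorial and preserves equivalences of abelian categories: since $\Phi$ is exact and an equivalence, its termwise extension $D^b(\Phi) \colon D^b(\cC) \to D^b(\coh(X_K)) = D^b(X_K)$ is a $K$-linear exact equivalence, with $K$-linearity coming from Lemma \ref{lem:VKlin} on the source side and from the evident $K$-structure on the target. So the only real content is to identify this derived functor with $\Psi$.

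First I would recall that the equivalence $D^b(\cC) \simeq D^b(X_R)/D^b_0(X_R)$ in \eqref{eq:Miyachi} is precisely the statement of \cite[Theorem 3.2]{Miy}: for an abelian category $\cA$ with a Serre subcategory $\cB$, the natural functor $D^b(\cA)/D^b_{\cB}(\cA) \to D^b(\cA/\cB)$ is an equivalence, where $D^b_{\cB}(\cA)$ is the full triangulated subcategory of complexes with cohomology in $\cB$. Applying this with $\cA = \coh(X_R)$ and $\cB = \coh(X_R)_0$ gives $D^b(X_R)/D^b_0(X_R) \simeq D^b(\cC)$, using that a complex has $R$-torsion cohomology iff each cohomology sheaf lies in $\coh(X_R)_0$. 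Under this identification, the projection $Q \colon D^b(X_R) \to D^b(\cC)$ is the derived functor of $p \colon \coh(X_R) \to \cC$.

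Next I would check the compatibility $\Psi = D^b(\Phi) \circ (\text{the equivalence } \eqref{eq:Miyachi})$. Both $\Psi$ and $D^b(\Phi)$ are, by construction, obtained by universal properties so that they fit into the commuting square relating $(-)\otimes_R K \colon D^b(X_R) \to D^b(X_K)$, the projection $Q$, and the derived functor of $\Phi$: indeed $(-)\otimes_R K = \Phi \circ p$ at the abelian level (this is the defining property of $\Phi$ from the text), hence the same factorization holds after deriving, and the universal property of the Verdier quotient $D^b(X_R)/D^b_0(X_R)$ identifies the induced functor uniquely. Since $\Psi$ was defined in the excerpt precisely as this induced functor, and $D^b(\Phi)$ composed with \eqref{eq:Miyachi} is another functor with the same factorization property, uniqueness forces them to agree. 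Therefore $\Psi$ is a composite of two $K$-linear exact equivalences, hence itself one.

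**Main obstacle.** The genuinely non-formal input is Theorem \ref{thm:esurj} (essential surjectivity of $\Phi$), which is already proved. Beyond that, the only point requiring care is the bookkeeping around \cite[Theorem 3.2]{Miy}: one must make sure that the triangulated subcategory $D^b_0(X_R)$ as defined here (complexes with $R$-torsion cohomology) really is the subcategory $D^b_{\coh(X_R)_0}(X_R)$ appearing in Miyachi's theorem, i.e.\ that $\coh(X_R)_0$ is a Serre subcategory stable under the relevant operations — this is immediate since an extension of torsion sheaves is torsion and $R$ is noetherian. I expect no further difficulty; the corollary is essentially a formal consequence of Theorem \ref{thm:esurj} together with the general machinery of bounded derived categories of Serre quotients.
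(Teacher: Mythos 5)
Your argument is correct and follows the same route as the paper: derive the abelian-category equivalence $\Phi$ of \pref{thm:esurj} termwise to get an equivalence $D^b(\cC)\to D^b(X_K)$, transport it across the Miyachi equivalence \pref{eq:Miyachi}, and identify the result with $\Psi$ via the universal property of the Verdier quotient applied to the factorization $(-)\otimes_R K = \Phi\circ p$. The one small thing worth flagging is your invocation of \pref{lm:lem:VKlin} for $K$-linearity; this is safe only because the $K$-linearity of $D^b(\cC)$ is established there independently of Corollary \ref{cor:CGF} (the Hom-group isomorphism in that lemma is what depends on the corollary), so no circularity arises — a point the paper itself notes explicitly.
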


In Section
$5$
we will extend
\pref{thm:esurj}
and
Corollary
\pref{cor:CGF}
to nonaffine base case
for flat proper families of $\bfk$-varieties a smooth fiber.

\section{Comparison with the categorical general fiber}
\subsection{The abelian category of coherent sheaves on the general fiber}
Consider the abelian category of coherent sheaves on the general fiber
$\Coh (\cX_{\bfk (( t ))})
\coloneqq
\Coh (\cX) / \Coh (\cX)_0$
for a formal deformation
$\cX$
of a smooth projective $\bfk$-variety over
$\bfk \llbracket t \rrbracket$.
In the case
where
$\cX$
is effective with a proper effectivization,
one can obtain 
$\Coh (\cX_{\bfk (( t ))})$
via formal completion along the closed fiber in the following sense.

\begin{cor} \label{cor:effectivize}
Let
$\cX = \hat{X}_R \to \Spf R$
be an effective formal deformation of a smooth projective variety with a proper effectivization
$X_R$.
Then the abelian category of coherent sheaves on the general fiber of
$\cX$
is equivalent to that on the generic fiber
$X_K$
of its effectivization, 
i.e.,
there exists a $K$-linear equivalence 
\begin{align*}
\coh(X_K)
\simeq
\Coh(\cX_K)
\end{align*}
of abelian categories.
\end{cor}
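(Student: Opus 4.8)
The plan is to reduce the statement to \pref{thm:esurj} by identifying both Serre quotients with a common one built from the $\scrO_\cX$-module category. The starting point is the classical GAGA-type comparison for formal completions. Since $\cX$ is effective with proper effectivization $X_R$ over the noetherian ring $R$, Grothendieck's existence theorem \cite[Corollorary III5.1.6]{GD} gives an equivalence
\begin{align*}
\Coh(X_R) \simeq \Coh(\cX)
\end{align*}
of abelian categories, compatible with the $R$-linear structure. First I would check that this equivalence carries the Serre subcategory $\coh(X_R)_0$ of $R$-torsion coherent sheaves onto $\Coh(\cX)_0$, the subcategory of coherent $\scrO_\cX$-modules killed by a power of $t$ (equivalently, by some nonzero element of $R$ since $\Spf R = \Spf\bfk\llbracket t\rrbracket$ in the motivating case, and more generally by some $r\in R$): this is immediate because formal completion commutes with the $R$-action and a coherent sheaf on $X_R$ is $R$-torsion if and only if it is supported over the closed point, which matches the support condition defining $\Coh(\cX)_0$. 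Passing to Serre quotients then yields a $K$-linear equivalence
\begin{align*}
\coh(X_R)/\coh(X_R)_0 \simeq \Coh(\cX)/\Coh(\cX)_0 = \Coh(\cX_K).
\end{align*}

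Next I would compose this with the equivalence $\Phi$ of \pref{thm:esurj}, which identifies $\coh(X_R)/\coh(X_R)_0$ with $\coh(X_K)$ as $K$-linear abelian categories. Chaining the two gives
\begin{align*}
\coh(X_K) \simeq \coh(X_R)/\coh(X_R)_0 \simeq \Coh(\cX_K),
\end{align*}
which is the assertion. Here I must be slightly careful that the hypotheses of \pref{thm:esurj} are met: $X_R$ should be smooth separated over the noetherian connected regular affine $\bfk$-scheme $\Spec R$ with $\bfk$-rational closed points. This is exactly the running assumption of Section 2, and a proper effectivization of a formal deformation of a smooth projective variety satisfies it (smoothness and separatedness of $X_R$ being part of the data of the effectivization, and $R = \bfk\llbracket t\rrbracket$ or a power series ring being regular local with $\bfk$-rational closed point); I would add a sentence recalling this.

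The only real point requiring care is the compatibility of the torsion subcategories under Grothendieck existence, i.e. verifying that the equivalence $\Coh(X_R)\simeq\Coh(\cX)$ is $R$-linear in the strong sense that it commutes with multiplication by each $r\in R$ and hence preserves being $R$-torsion in both directions. This follows because the completion functor $\scrF\mapsto\hat\scrF$ is $\scrO_{X_R}$-linear, in particular $R$-linear, and its quasi-inverse (the existence-theorem functor) is then automatically $R$-linear as well; a coherent sheaf $\scrF$ on $X_R$ satisfies $r\scrF = 0$ if and only if $r\hat\scrF = 0$. I do not anticipate a serious obstacle here — it is a routine but necessary bookkeeping check — after which the result is a two-line composition. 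I would also remark that one could alternatively deduce the corollary at the derived level from Corollary~\pref{cor:CGF} together with the derived form of Grothendieck existence and \cite[Theorem 3.2]{Miy}, but the abelian-category argument above is the most direct.
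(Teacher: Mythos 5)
Your proof is correct and takes essentially the same route as the paper: apply Grothendieck's existence theorem \cite[Corollorary III5.1.6]{GD} to get the $R$-linear equivalence $\coh(X_R)\simeq\Coh(\cX)$, descend to the Serre quotients, and compose with the equivalence $\coh(X_K)\simeq\coh(X_R)/\coh(X_R)_0$ of Theorem~\ref{thm:esurj}. Your extra paragraph verifying that the completion functor identifies $\coh(X_R)_0$ with $\Coh(\cX)_0$ is exactly the bookkeeping the paper compresses into the phrase ``by universality of Serre quotient,'' so it is a welcome expansion rather than a deviation.
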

\begin{proof}
We have the pullback diagram
\begin{align*}
\begin{gathered}
\xymatrix{
\cX = \hat{X}_R \ar[r]^-{\iota} \ar_{\hat{\pi}}[d]
& X_R \ar^{\pi_R}[d] \\
\Spf R \ar[r]^-{}
& \Spec R
}
\end{gathered}
\end{align*}
of noetherian formal schemes.
Since $R$ is a complete local noetherian ring,
one can apply
\cite[Corollorary III5.1.6]{GD}
to see that
the functor
\begin{align} \label{eq:Alg}
\coh (X_R) \to \Coh (\cX),
\end{align}
which sends each coherent sheaf
$\scrF$
on
$X_R$
to its formal completion
$\hat{\scrF}$
along the closed fiber,
is an $R$-linear equivalence of abelian categories.
By universality of Serre quotient,
we obtain the induced $K$-linear equivalence 
\begin{align*}
\coh(X_R) / \coh(X_R)_0 \to \Coh(\cX) / \Coh(\cX)_0.
\end{align*}
\end{proof}

\subsection{Serre functor}
In the case
where
$\cX$
is effective with a proper effectivization,
the Serre functor from
\cite[Theorem 1.1]{HMS11}
constructed
when
$\cX$
is a formal deformation of a
$K3$
surface,
extends to
smooth projective varieties
and
formal power series rings of any finite dimension
in a straightforward way.

\begin{prop}
Let
$\cX = \hat{X}_R \to \Spf R$
be an effective formal deformation of a $d$-dimensional smooth projective variety with a proper effectivization
$X_R$.
Then a Serre functor on
$D^b(\cX_K)$
is given by
\begin{align*}
S (\hat{E}_K) = ( \widehat{E \otimes \omega_{\pi_R}})_K [d],
\end{align*}
where
$\omega_{\pi_R}$
is the dualizing sheaf for
$\pi_R$.
\end{prop}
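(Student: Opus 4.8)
The plan is to reduce the statement to the known case of $K3$ surfaces treated in \cite[Theorem 1.1]{HMS11}, transporting everything along the equivalence of \pref{cor:effectivize}. First I would observe that by \cite[Corollorary III5.1.6]{GD} the completion functor \eqref{eq:Alg} together with the induced equivalence $\coh(X_R)/\coh(X_R)_0 \simeq \Coh(\cX)/\Coh(\cX)_0$ and \pref{cor:CGF} identifies $D^b(\cX_K)$ with $D^b(X_K)$ as $K$-linear triangulated categories, compatibly with the completion on objects, i.e. $\hat E_K$ corresponds to $E_K$. Under this identification it suffices to exhibit a Serre functor on $D^b(X_K)$ and to check that it is computed by $E_K \mapsto (E \otimes \omega_{\pi_R})_K[d]$. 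Since $X_K$ is smooth projective over $K$ of dimension $d$ (being the generic fiber of the smooth separated, in fact proper, family $X_R$), classical Grothendieck--Serre duality on $X_K$ already provides a Serre functor, namely $(-) \otimes \omega_{X_K}[d]$ with $\omega_{X_K}$ the dualizing sheaf of $X_K/K$. The content is then the comparison $\omega_{X_K} \cong (\omega_{\pi_R})_K$ and the fact that tensoring commutes with the base change $(-)_K$.

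Concretely, the second step is to identify the relative dualizing sheaf. Because $\pi_R \colon X_R \to \Spec R$ is smooth (hence Gorenstein) of relative dimension $d$, the relative dualizing complex is the shifted line bundle $\omega_{\pi_R}[d]$, and its formation commutes with base change along $\Spec K \to \Spec R$; thus the dualizing sheaf $\omega_{X_K}$ of $X_K \to \Spec K$ is canonically $i^\ast \omega_{\pi_R} = (\omega_{\pi_R})_K$, where $i \colon X_K \to X_R$ is the canonical inclusion. Next, for $E \in D^b(X_R)$ one has $i^\ast(E \otimes^{\mathbf L} \omega_{\pi_R}) \cong i^\ast E \otimes^{\mathbf L} i^\ast \omega_{\pi_R} \cong E_K \otimes \omega_{X_K}$ (the tensor factor being a line bundle, so underived), and under the completion dictionary $i^\ast(E \otimes \omega_{\pi_R})$ corresponds to $(\widehat{E \otimes \omega_{\pi_R}})_K$. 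Combining, $S(\hat E_K) = (\widehat{E \otimes \omega_{\pi_R}})_K[d]$ is carried to $E_K \otimes \omega_{X_K}[d]$, which is the Serre functor on $D^b(X_K)$; hence the formula defines a Serre functor on $D^b(\cX_K)$.

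Finally, one must check that this assignment is well defined on $D^b(\cX_K)$, i.e. independent of the choice of effectivization $E$ of a given object $\hat E_K$ and functorial. This follows formally: the completion equivalence \eqref{eq:Alg} (extended to derived categories) and $(-)_K$ are functors, so $E \mapsto (\widehat{E \otimes \omega_{\pi_R}})_K[d]$ descends to a functor on the Verdier quotient $D^b(X_R)/D^b_0(X_R) \simeq D^b(\cX_K)$, and the natural isomorphism witnessing the Serre property is transported from the one on $X_K$. The main obstacle, such as it is, is the base-change statement for the relative dualizing sheaf of a smooth morphism over a non-finite-type but regular noetherian base $\Spec R$; this is standard (smooth morphisms are flat, lci, and $\omega_{\pi_R}$ is $\det \Omega_{X_R/R}$, whose formation obviously commutes with any base change), so no genuine difficulty arises — the proposition is essentially a bookkeeping exercise combining \pref{cor:effectivize}, \pref{cor:CGF}, and Serre duality on the smooth projective $K$-variety $X_K$, exactly as in \cite[Theorem 1.1]{HMS11} but with the $K3$-specific input replaced by the duality theory of arbitrary smooth projective varieties.
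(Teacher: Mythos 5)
Your proof is correct, but it takes a different route from the paper's. The paper never passes through $D^b(X_K)$ in this proposition: it computes $\Hom_{D^b(\cX_K)}(\hat E_K,\hat F_K)$ directly by a five-step chain of isomorphisms, using \pref{lem:VKlin} to identify Hom-sets in the Verdier quotient with $\Hom_{D^b(\cX)}(\hat E,\hat F)\otimes_R K$, then the completion equivalence \pref{eq:Alg} to move to $D^b(X_R)$, then \emph{relative} Serre duality for the smooth proper morphism $\pi_R$ over $R$ to produce the dual, and finally undoes the first two steps. You instead transport the whole problem along the composite equivalence $D^b(\cX_K)\simeq D^b(X_K)$ (from \pref{cor:effectivize} and \pref{cor:CGF}), invoke the absolute Serre functor $(-)\otimes\omega_{X_K}[d]$ on the smooth proper $K$-variety $X_K$, identify $\omega_{X_K}\cong(\omega_{\pi_R})_K$ by base change of the determinant of the cotangent sheaf, and then translate the formula back through the dictionary. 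Both are sound; yours is more geometric and leans on the full strength of the equivalences established earlier, while the paper's is a self-contained Hom-set calculation in the formal setting that mirrors the original $K3$ argument of \cite[Theorem 1.1]{HMS11}, with the $K3$ input replaced by relative Grothendieck--Verdier duality over $R$. Two small points: the proposition only assumes $X_R$ proper, so you should say that $X_K$ is smooth \emph{proper} rather than projective (Serre duality holds in that generality, so nothing breaks), and the map $X_K\to X_R$ is a flat morphism, not an inclusion.
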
 
\begin{proof}
We have
\begin{align*}
\Hom_{D^b \left( \Coh \left( \cX_K \right) \right)} \left( \hat{E}_K, \hat{F}_K \right) 
&\cong
\Hom_{D^b(\cX)} \left( \hat{E}, \hat{F} \right) \otimes_R K \\
&\cong
\Hom_{D^b(X_R)} \left( E, F \right) \otimes_R K \\
&\cong
\Hom_{D^b(X_R)} \left( F, E \otimes \omega_{\pi_R} [d]  \right)^\vee \otimes_R K \\
&\cong
\Hom_{D^b(\cX)} \left( \hat{F}, \widehat{{E} \otimes \omega_{\pi_R}} [d]  \right)^\vee \otimes_R K \\
&\cong
\Hom_{D^b \left( \Coh \left( \cX_K \right) \right)} \left( \hat{F}_K, ( \widehat{E \otimes \omega_{\pi_R}} )_K [d]  \right)^\vee,
\end{align*}
where the first and the fifth,
the second and the fourth,
and the third isomorphisms follow from
\pref{lem:VKlin},
the equivalence
\pref{eq:Alg},
and
Serre duality for the smooth morphism
$\pi_R$
of relative dimension
$d$
respectively.
\end{proof}

\subsection{The derived category of the general fiber}
By
\cite[Theorem 1.1]{HMS11}
we have
\begin{align*} 
D^b_{\rm{c}} \left( \rm{Mod} \left( \scrO_\cX \right) \right) / D^b_{\rm{c}_0} \left( \rm{Mod} \left( \scrO_\cX \right) \right)
\simeq
D^b \left( \Coh \left( \cX_{\bfk (( t ))} \right) \right)
\end{align*}
when
$\cX$
is a formal deformation of a K3 surface.
This is deduced from the intermediate $\bfk (( t ))$-linear exact equivalence
\begin{align} \label{eq:connector}
D^b_{\rm{c}} \left( \rm{Mod} \left( \scrO_\cX \right) \right) / D^b_{\rm{c}_0} \left( \rm{Mod} \left( \scrO_\cX \right) \right)
\simeq
D^b(\cX) / D^b_0(\cX)
\end{align}
established in the proof of
\cite[Proposition 3.10]{HMS11}.
While we have
\begin{align*}
D^b_{\rm{c}_0} \left( \rm{Mod} \left( \scrO_\cX \right) \right)
\simeq
D^b_0(\cX)
\end{align*}
by
\cite[Proposition 2.5]{HMS11},
in general the natural inclusion
\begin{align*}
D^b(\cX)
\hookrightarrow
D^b_{\rm{c}} \left( \rm{Mod} \left( \scrO_\cX \right) \right)
\end{align*}
is not an equivalence.
Hence one cannot expect
\pref{eq:connector}
to hold for more general
$\cX$.
However,
in the case 
where
$\cX$
is effective with a proper effectivization,
we have
\begin{align*}
D^b_{\rm{c}} \left( \rm{Mod} \left( \scrO_{X_R} \right) \right) / D^b_{\rm{c}_0} \left( \rm{Mod} \left( \scrO_{X_R} \right) \right)
\simeq
D^b (X_R) / D^b_0(X_R)
\simeq
D^b(X_K)
\end{align*}
by Corollary
\pref{cor:CGF}.
Note that
the first equivalence follows from
\begin{align*}
D^b_{\rm{c}} \left( \rm{Mod} \left( \scrO_{X_R} \right) \right)
\simeq
D^b(X_R),
\end{align*}
which holds for noetherian schemes.
Unless the closed fiber of
$X_R$
is a $K3$ surface,
in general one can only recover the part
\begin{align*}
D^b(\cX) / D^b_0(\cX)
\simeq
D^b(\cX_K)
\end{align*}
via
\cite[Theorem 3.2]{Miy}
and
formal completion along the closed fiber in the sense of Corollary
\pref{cor:effectivize}.

\section{Induced Fourier--Mukai transforms}
As mentioned in
\cite{HMS11},
one advantage to describe the derived category of the generic fiber as a Verdier quotient
is that
the Fourier--Mukai machinery carries over easily.
Given a relative integral functor
$\Phi_\cE \colon D^b(X_R) \to D^b(X^\prime_R)$
on smooth proper families
$\pi_R \colon X_R \to \Spec R$
and
$\pi^\prime_R \colon X^\prime_R \to \Spec R$,
we study the induced derived equivalence on
thier generic fibers,
geometric generic fibers,
and
formal completions.
One will see that
$\Phi_\cE$
being equivalences
when restricted to general fibers implies the derived equivalence of
their generic
and
geometric generic fibers.
We will discuss the opposite direction in Section
$5$
below.
 
\subsection{Induced Functor from smooth proper families to generic fibers} 
\begin{prop} \label{prop:iDeq2}
Let
$X_R, X^\prime_R$
be smooth proper families over
$R$.
If
$\Phi_\cE \colon D^b (X_R) \to D^b (X^\prime_R)$
is a relative Fourier--Mukai functor,
then the induced integral functor
$\Phi_{\cE_K} \colon D^b (X_K) \to D^b (X^\prime_K)$
is an equivalence.
Here,
$\cE_K
\in
D^b (X_R \times_R X^\prime_R) / D^b_0 (X_R \times_R X^\prime_R)$
is the image of
$\cE$
by the natural projection. 
\end{prop}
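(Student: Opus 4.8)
The plan is to show that the base-changed kernel $\cE_K$ induces a Fourier--Mukai functor between the generic fibers which is quasi-inverse to the functor induced by a quasi-inverse of $\Phi_\cE$, reducing everything to the compatibility of base change with kernel composition. First I would recall that since $\Phi_\cE$ is a relative Fourier--Mukai equivalence, there is a kernel $\cE' \in D^b(X'_R \times_R X_R)$ with $\Phi_{\cE'} \circ \Phi_\cE \simeq \id_{D^b(X_R)}$ and $\Phi_\cE \circ \Phi_{\cE'} \simeq \id_{D^b(X'_R)}$; by the convolution formula for composition of integral functors this means the convolutions $\cE' * \cE$ and $\cE * \cE'$ are isomorphic to the structure sheaves of the respective relative diagonals (shifted appropriately, but here the families are smooth proper over $R$ so the diagonals are honest objects).

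Next I would pass to the Verdier quotient. By Corollary~\pref{cor:CGF} applied to the smooth proper families $X_R \times_R X'_R$, $X'_R \times_R X_R$, $X_R$, and $X'_R$, we have $K$-linear exact equivalences $D^b((X_R \times_R X'_R)_K) \simeq D^b(X_R \times_R X'_R)/D^b_0$ and likewise for the others; note $(X_R \times_R X'_R)_K \cong X_K \times_K X'_K$. The image $\cE_K$ of $\cE$ under the projection $D^b(X_R \times_R X'_R) \to D^b(X_R \times_R X'_R)/D^b_0$ therefore corresponds to an object of $D^b(X_K \times_K X'_K)$, and similarly $\cE'_K$ corresponds to an object of $D^b(X'_K \times_K X_K)$. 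The key point is then that the projection functors are compatible with the operations defining integral functors --- derived pullback along the flat projections, derived tensor product, and derived pushforward along the proper projections --- because all of these operations preserve the subcategories $D^b_0$ of complexes with $R$-torsion cohomology (pullback and tensor obviously; pushforward because $R$-torsion is preserved by proper pushforward over $\Spec R$, as $R_i(\pi_R)_*$ of an $R$-torsion sheaf is $R$-torsion). Hence $\Phi_{\cE_K} \circ \Phi_{\cE'_K}$ and $\Phi_{\cE'_K} \circ \Phi_{\cE_K}$ are the integral functors with kernels the images of $\cE * \cE'$ and $\cE' * \cE$ under the projection, i.e.\ the images of the structure sheaves of the relative diagonals. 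Since the relative diagonal of $X_R \times_R X_R$ base-changes to the diagonal of $X_K \times_K X_K$, and $\mathcal{O}_{\Delta_{X_K}}$ induces the identity, we conclude that $\Phi_{\cE_K}$ is an equivalence with quasi-inverse $\Phi_{\cE'_K}$.

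The main obstacle I anticipate is the bookkeeping needed to make ``the projection is compatible with convolution'' precise: one must check that the flat base change and projection formula identifications, together with the fact that $D^b_0$ is a thick subcategory stable under all the relevant derived functors, assemble into a genuine natural isomorphism $\Phi_{Q(\cF)} \simeq \bar{Q} \circ \Phi_\cF \circ \bar{Q}^{-1}$ of functors on the quotients, where $Q$ and $\bar{Q}$ are the appropriate projections. Once this is established for a single kernel it applies uniformly to $\cE$, $\cE'$, and the diagonal kernels, and the rest is formal. I would isolate this compatibility as the technical heart of the argument, appealing to the description of integral functors purely in terms of $L\pi^*$, $\otimes^L$, and $R\pi_*$ and verifying stability of $D^b_0$ under each.
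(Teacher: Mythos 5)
Your proposal is in the same spirit as the paper's argument---both push the inverse/adjoint kernel through the quotient functor---but there are two places where you assert facts that are not automatic and that the paper handles more carefully.

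First, you begin by asserting the existence of a kernel $\cE' \in D^b(X'_R \times_R X_R)$ inverse to $\cE$. This is exactly the non-trivial input that the paper supplies: the inverse functor $\Phi_\cE^{-1}$ is a priori just an equivalence, not an integral functor, and the paper identifies it with the left adjoint $\Phi_{\cE_L}$ (which \emph{does} come with an explicit kernel via Grothendieck--Verdier duality, since the families are smooth proper over $R$).

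Second, and more seriously, you go from the natural isomorphism $\Phi_{\cE'} \circ \Phi_\cE \cong \id$ to a \emph{kernel-level} isomorphism $\cE' * \cE \cong \scrO_\Delta$. By the convolution formula one only gets $\Phi_{\cE' * \cE} \cong \Phi_{\scrO_\Delta}$; upgrading that to an isomorphism of kernels requires uniqueness of Fourier--Mukai kernels, which relies on projectivity in Orlov's theorem and is not available over $R$ or for a merely proper (rather than projective) generic fiber $X_K$. The paper avoids this entirely by never touching kernels: it uses the commutative square $Q \circ \Phi_\cE \cong \Phi_{\cE_K} \circ Q$ (and likewise for $\cE_L$), composes, and invokes the universal property of the Verdier quotient to deduce $\Phi_{(\cE_L)_K} \circ \Phi_{\cE_K} \cong \id$ directly at the level of functors. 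You could repair your argument by doing the same: replace ``$\cE' * \cE \cong \scrO_\Delta$'' with ``$\Phi_{(\cE' * \cE)_K} \cong \id$ follows from projecting the natural isomorphism $\Phi_{\cE' * \cE} \cong \id$ through $Q$ and using uniqueness of factorizations through the Verdier quotient.'' Your emphasis on stability of $D^b_0$ under $L\pi^*$, $\otimes^L$, and $R\pi_*$ is indeed what makes the induced functor on quotients an integral functor, and that part is fine.
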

\begin{proof}
Since objects of
$D^b ( X_R \times_R X^\prime_R ) / D^b_0 ( X_R \times_R X^\prime_R )$ 
are the same as those of
$D^b (X_R \times_R X^\prime_R)$
\cite[Appendix]{HMS11},
the $R$-linear functor
$\Phi_\cE$
induces an integral functor
\begin{align*}
\Phi_{\cE_K}
\colon
D^b(X_R) / D^b_0(X_R) \to D^b(X^\prime_R) / D^b_0(X^\prime_R).
\end{align*}
By Corollary
\pref{cor:CGF}
we have the commutative diagram
\begin{align*}
\begin{gathered}
\xymatrix{
D^b (X_R) \ar[r]^-{\Phi_\cE} \ar_{Q}[d]
& D^b (X^\prime_R) \ar^{Q}[d] \\
D^b (X_K) \ar[r]^-{\Phi_{\cE_K}}
& D^b (X^\prime_K).
}
\end{gathered}
\end{align*}

The inverse functor
$\Phi^{-1}_\cE$
is a left adjoint to
$\Phi_\cE$
as
$\Phi_\cE$
is an equivalence. 
On the other hand,
due to the Grothendieck--Verdier duality
$\Phi_\cE$
has a left adjoint
$\Phi_{\cE_L}$
with
$\cE_L$
a perfect complex on 
$X_R \times_R X^\prime_R$.
By uniqueness of left adjoint up to isomorphism,
it follows
$\Phi^{-1}_\cE \cong \Phi_{\cE_L}$.
Then 
$\Phi^{-1}_\cE$
induces an integral functor
$\Phi_{(\cE_L)_K}$
and
we obtain natural isomorphisms
$\Phi_{(\cE_L)_K} \circ \Phi_{\cE_K}
\cong
\rm{Id}_{D^b (X_K)}, \
\Phi_{\cE_K} \circ \Phi_{(\cE_L)_K}
\cong
\rm{Id}_{D^b (X^\prime_K)}$.
Thus the functor
$\Phi_{\cE_K}$
is an equivalence.
\end{proof}

\begin{rmk}
By universality of Verdier quotient,
Corollary
\pref{cor:CGF}
induces a mere $K$-linear equivalence
$D^b(X_K) \simeq D^b(X^\prime_K)$,
while Proposition
\pref{prop:iDeq2}
preserves Fourier--Mukai kernels.
\end{rmk}

\begin{cor} 
If the restrictions of
$\Phi_\cE$
to general fibers are equivalences, 
then the induced integral functor
$\Phi_{\cE_K} \colon D^b (X_K) \to D^b (X^\prime_K)$
is an equivalence.
\end{cor}
\begin{proof}
We are given a Zariski open subset
$U \subset \Spec R$
such that
any pair of closed fibers of
$X_R$
and
$X^\prime_R$
over
$U$
are derived-equivalent.
By
\cite[Proposition 2.15]{HLS}
the relative integral functor
$\Phi_{\cE_U}$
is an equivalence,
where
$\cE_U$
denotes the restriction of
$\cE$
to
$\pr_1^{-1} \circ \pi^{-1}_R (U)$.
Note that
we do not need the assumption on
$X_U = X_R \times_R U$
to be locally projective in the statement of
\cite[Proposition 2.15]{HLS},
which guarantees the existence of a right adjoint to
$\Phi_{\cE_U}$,
as 
$\pi_R, \pi^\prime_R$
are smooth proper.
Now,
the claim follows immediately from
Proposition \pref{prop:iDeq2}.
\end{proof}

\begin{rmk}
The previous corollary provides from general to generic induction of Fourier--Mukai transforms.
Conversely,
by Corollary
\pref{cor:CGF}
any integral functor
$\Phi_{\cE_K} \colon D^b (X_\xi) \to D^b (X^\prime_\xi)$
lifts to a relative integral functor
$\Phi_\cE \colon D^b (X_R) \to D^b (X^\prime_R)$.
In Section
$5$
we will prove that
the induced functor is an equivalence
when restricted to general fibers.
\end{rmk}

\subsection{Induced Functor from generic to geometric generic fibers}
Due to
\cite{Ola},
one can slightly improve
\cite[Exercise 5.18]{Huy}
and
a well-known fact about the relation between
field extensions
and
Fourier--Mukai transforms.

\begin{lem} \label{lem:cdn}
Let
$X_K, X^\prime_K$
be smooth proper $K$-varieties
and
$\Phi_{\cE_K} \colon D^b(X_K) \to D^b(X^\prime_K)$
an integral functor.
Then
$\Phi_{\cE_K}$
is an equivalence
if and only if
there are isomorphisms
\begin{align} \label{eq:isoms}
\cE_K * (\cE_K)_L \cong \scrO_\Delta, \
(\cE_K)_L * \cE_K \cong \scrO_{\Delta^\prime},
\end{align}
where
$\Delta
\colon
X_K \hookrightarrow X_K \times X_K,
\Delta^\prime
\colon
X^\prime_K \hookrightarrow X^\prime_K \times X^\prime_K$
are the diagonal embeddings.
\end{lem}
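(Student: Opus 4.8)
The plan is to run the standard dictionary between integral functors and their kernels. Recall that for composable kernels one has $\Phi_{\cF}\circ\Phi_{\cG}\simeq\Phi_{\cG*\cF}$, and that $\Phi_{\scrO_\Delta}\simeq\mathrm{id}_{D^b(X_K)}$, $\Phi_{\scrO_{\Delta^\prime}}\simeq\mathrm{id}_{D^b(X^\prime_K)}$. Since $X_K$ and $X^\prime_K$ are smooth projective, Grothendieck--Verdier duality ensures that the left adjoint of $\Phi_{\cE_K}$ is again an integral functor, namely $\Phi_{(\cE_K)_L}$ with $(\cE_K)_L=\cE_K^\vee\otimes p^*\omega_{X^\prime_K}[\dim X^\prime_K]$, and likewise that $\Phi_{\cE_K}$ admits a right adjoint integral functor. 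I will fix the conventions so that $\cE_K*(\cE_K)_L$ is a kernel on $X_K\times X_K$ and $(\cE_K)_L*\cE_K$ a kernel on $X^\prime_K\times X^\prime_K$, matching the diagonal embeddings $\Delta,\Delta^\prime$ in the statement.

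First I would treat the ``if'' direction. Assuming the isomorphisms \eqref{eq:isoms}, the dictionary gives
\begin{align*}
\Phi_{(\cE_K)_L}\circ\Phi_{\cE_K}\simeq\Phi_{\cE_K*(\cE_K)_L}\simeq\Phi_{\scrO_\Delta}\simeq\mathrm{id}_{D^b(X_K)},
\end{align*}
and symmetrically $\Phi_{\cE_K}\circ\Phi_{(\cE_K)_L}\simeq\Phi_{(\cE_K)_L*\cE_K}\simeq\Phi_{\scrO_{\Delta^\prime}}\simeq\mathrm{id}_{D^b(X^\prime_K)}$; hence $\Phi_{\cE_K}$ is an equivalence with quasi-inverse $\Phi_{(\cE_K)_L}$. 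For the ``only if'' direction, suppose $\Phi_{\cE_K}$ is an equivalence and let $\Psi$ be a quasi-inverse. Then $\Psi$ is simultaneously a left and a right adjoint of $\Phi_{\cE_K}$, so by uniqueness of adjoints $\Psi\simeq\Phi_{(\cE_K)_L}$. Therefore $\Phi_{\cE_K*(\cE_K)_L}\simeq\Phi_{\scrO_\Delta}$ as endofunctors of $D^b(X_K)$ and $\Phi_{(\cE_K)_L*\cE_K}\simeq\Phi_{\scrO_{\Delta^\prime}}$ as endofunctors of $D^b(X^\prime_K)$, and it remains to descend these isomorphisms of functors to the level of kernels. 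Here I would invoke the uniqueness of Fourier--Mukai kernels for smooth projective varieties (cf.\ \cite{Huy}): an object of $D^b(X_K\times X_K)$ is determined up to isomorphism by the integral functor it represents. This yields $\cE_K*(\cE_K)_L\simeq\scrO_\Delta$ and $(\cE_K)_L*\cE_K\simeq\scrO_{\Delta^\prime}$.

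The hard part is precisely the final step of the ``only if'' direction: passing from an isomorphism of integral functors to an isomorphism of their kernels. This is a genuine theorem (essentially Orlov's uniqueness result), and it is exactly where the smooth projective hypothesis is used; I would cite it rather than reprove it. A secondary point demanding care is the bookkeeping of left versus right kernels and of the side on which the convolution is formed, so that the two asserted isomorphisms genuinely land on $X_K\times X_K$ and on $X^\prime_K\times X^\prime_K$ respectively.
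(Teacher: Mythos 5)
Your argument is correct and follows essentially the same path as the paper's proof: in both directions you use the convolution dictionary, identify the quasi-inverse with $\Phi_{(\cE_K)_L}$ via uniqueness of adjoints, and then invoke Orlov's uniqueness of Fourier--Mukai kernels for smooth projective varieties to pass from an isomorphism of functors to an isomorphism of kernels. No gaps.
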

\begin{proof}
Assume that
we are given the isomorphisms
\pref{eq:isoms}.
Regarding the isomorphic objects as kernels,
we obtain natural isomorphisms
\begin{align} \label{eq:nisoms}
\Phi_{(\cE_K)_L} \circ \Phi_{\cE_K}
\cong
\rm{Id}_{D^b (X_K)}, \
\Phi_{\cE_K} \circ \Phi_{(\cE_K)_L}
\cong
\rm{Id}_{D^b (X^\prime_K)}.
\end{align}
Thus the functor
$\Phi_{\cE_K}$
is an equivalence.

Conversely,
assume that
$\Phi_{\cE_K}$
is an equivalence.
Let
$(\Phi_{\cE_K})^{-1}$
be its inverse.
Then we have natural isomorphisms
\begin{align*}
(\Phi_{\cE_K})^{-1} \circ \Phi_{\cE_K}
\cong
\rm{Id}_{D^b (X_K)}, \
\Phi_{\cE_K} \circ (\Phi_{\cE_K})^{-1}
\cong
\rm{Id}_{D^b (X^\prime_K)}.
\end{align*}
In particular,
it follows that
$(\Phi_{\cE_K})^{-1}$
is a left adjoint of
$\Phi_{\cE_K}$.
By uniqueness of left adjoint up to isomorphism,
we obtain
$(\Phi_{\cE_K})^{-1} \cong \Phi_{(\cE_K)_L}$
and
\pref{eq:nisoms}.
Thus two pairs of the kernel
$\cE_K * (\cE_K)_L, \scrO_\Delta$
and
$(\cE_K)_L * \cE_K, \scrO_{\Delta^\prime}$
respectively define the same derived autoequivalence of
$D^b(X_K)$
and
$D^b(X^\prime_K)$.
Since any derived equivalence of smooth proper varieties is defined by a unique Fourier--Mukai kernel up to isomorphism
\cite{Ola},
we obtain 
\pref{eq:isoms}.
\end{proof}

\begin{cor} \label{cor:iDeq3}
Let
$X_K, X^\prime_K$
be smooth proper $K$-varieties.
If
$X_K, X^\prime_K$
are derived-equivalent,
then for any field extension
$L_0 / K$
the base changes
$X_{L_0}, X^\prime_{L_0}$
are derived-equivalent.
\end{cor}
\begin{proof}
Let
$\cE_K \in D^b(X_K \times X^\prime_K)$
be a Fourier--Mukai kernel,
which is unique up to isomorphism. 
By
\pref{lem:cdn}
we have isomorphisms
\begin{align*} 
\cE_K * (\cE_K)_L \cong \scrO_\Delta, \
(\cE_K)_L * \cE_K \cong \scrO_{\Delta^\prime}.
\end{align*}
As
$L_0$
is a flat $K$-module,
the pullback by
$\bar{i}^{\prime \prime}
\colon
X_{L_0} \times X_{L_0} 
\to
X_K \times X_K$
yields
\begin{align*} 
\cE_{L_0} * (\cE_{L_0})_L
\cong
\bar{i}^{\prime \prime *} \left( \cE_K * (\cE_K)_L \right)
\cong
\bar{i}^{\prime \prime *} \scrO_\Delta
\cong
\scrO_{\bar{\Delta}},
\end{align*}
where
$\cE_{L_0} = \bar{i}^{\prime \prime *} \cE_K$
and
$\bar{\Delta}
\colon
X_{L_0} \hookrightarrow X_{L_0} \times X_{L_0}$
is the diagonal embedding.
Here,
we have used
$\Omega_{X_{L_0} / L_0}
\cong
\Omega_{X_K / K} \otimes_K L_0$
\cite[Proposition II8.10]{Har}.
Similarly,
we have
$(\cE_{L_0})_L * \cE_{L_0}
\cong
\scrO_{\bar{\Delta^\prime}}$.
Again,
by
\pref{lem:cdn}
we conclude that
$\Phi_{\cE_{L_0}}$
is an equivalence.
\end{proof}

\begin{rmk}
When
$\Spec R$
is an affine $\bfk$-variety
and
$\bfk$
is a universal domain,
very general fibers of
$X_R, X^\prime_R$
are dereived-equivalent
if and only if
so are their geometric generic fibers.
Indeed,
by
\cite[Lemma 2.1]{Via}
there is an isomorphism
$\bfk \to \bar{K}$
along which the pullback of
$X_\bfk \times X^\prime_\bfk$
is isomorphic to
$X_{\bar{K}} \times X^\prime_{\bar{K}}$.
Here,
$X_\bfk, X^\prime_\bfk$
are very general fibers of
$X_R, X^\prime_R$.
One can apply the same argument as in the proof of Corollary
\pref{cor:iDeq3}. 
Note that
we assume
$\bfk$
to be a universal domain to apply
\cite[Lemma 2.1]{Via}.
\end{rmk}

\subsection{Induced Functors to effective formal families and their categorical general fibers}
Assume that
the families 
$\pi_R \colon X_R \to \Spec R$,
$\pi^\prime_R \colon X^\prime_R \to \Spec R$
are effectivizations of formal deformations
$\cX$,
$\cX^\prime$
over
$R$
of smooth projective varieties
$X_0, X^\prime_0$
respectively.
Here,
we will show that
the induced Fourier--Mukai functor to their generic fibers is compatible with formal completion along the closed fibers.
The schemes
$X_R, X^\prime_R$,
their restrictions to the $n$-th order thickenings,
and
their formal completions along the closed fibers
form the commutative diagram  
\begin{align*} 
\begin{gathered}
\xymatrix{
X_n \ar@{^(->}[d]^{\tau_n} & X_n \times_{R_n} X^\prime_n \ar[l]_{q_n} \ar@{^(->}[d]^{\tau^{\prime \prime}_n} \ar[r]^{p_n} & X^\prime_n \ar@{^(->}[d]^{\tau^\prime_n} \\
\cX \ar@{^(->}[d]^\iota & \cX \times_R \cX^\prime \ar[l]_{\hat{q}} \ar@{^(->}[d]^{\iota^{\prime \prime}} \ar[r]^{\hat{p}} & \cX^\prime \ar@{^(->}[d]^{\iota^\prime} \\
X_R & X_R \times_R X^\prime_R \ar[l]_{q} \ar[r]^{p} & X^\prime_R \\
}
\end{gathered}
\end{align*}
of noetherian formal schemes.
Here,
$\hat{q}, \hat{p}$
are canonically determined as the limit by the compatible collections of morphisms
$q_n, p_n$
of schemes,
and
compositions of two sequential vertical arrows give the canonical factorizations of the closed embeddings
\begin{align*}
\kappa_n = \iota \circ \tau_n, \ \ 
\kappa^{\prime \prime}_n = \iota^{\prime \prime} \circ \tau^{\prime \prime}_n, \ \
\kappa^\prime_n = \iota^\prime \circ \tau^\prime_n. 
\end{align*}

\begin{prop} \label{prop:iDeq4}
Given
$\cE \in D^b(X_R \times_R X^\prime_R)$,
the formal completion
$\hat{\cE}$
along the closed fiber
$X_0 \times X^\prime_0$
defines a relative integral functor
\begin{align*}
\Phi_{\hat{\cE}}
=
R \hat{p}_* (\hat{\cE} \otimes^L \hat{q}^* (-))
\colon
D^b (\cX)
\to
D^b (\cX^\prime),
\end{align*}
which makes the diagram
\begin{align*}
\begin{gathered}
\xymatrix{
D^b(X_R) \ar[r]^-{\Phi_\cE} \ar_{G}[d]
& D^b(X^\prime_R) \ar^{G}[d] \\
D^b(\cX) \ar[r]^-{\Phi_{\hat{\cE}}}
& D^b(\cX^\prime)
}
\end{gathered}
\end{align*}
$2$-commutative.
\end{prop}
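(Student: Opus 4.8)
The plan is to derive the $2$-commutativity from three compatibilities of the formal completion functor along the closed fibers: it commutes with derived pullback, with derived tensor product, and --- this is the substantial point --- with derived pushforward along proper morphisms. Write $Z = X_R \times_R X^\prime_R$ and $\cZ = \cX \times_R \cX^\prime$. From the displayed tower, $\cZ$ is the formal completion of $Z$ along the closed fiber $X_0 \times X^\prime_0$, and since the closed point of $\Spec R$ is $\bfk$-rational this closed fiber coincides with $q^{-1}(X_0) = p^{-1}(X^\prime_0)$. Because $\pi_R, \pi^\prime_R$ are smooth and proper, the projections $q, p$ and their completions $\hat q, \hat p$ are smooth and proper; because $X_R, X^\prime_R$ are regular, $Z$ is regular, so $\cE$ is perfect and hence $\hat\cE$ is perfect on $\cZ$. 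Consequently $\hat\cE \otimes^L \hat q^*(-)$ already preserves bounded coherent complexes, and the only nontrivial part of well-definedness is the coherence and boundedness of $R\hat p_*$.

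First I would record that, $R$ being noetherian, $\scrO_\cZ$ is flat over $\scrO_Z$, so completion is the exact functor $(-) \otimes_{\scrO_Z}\scrO_\cZ$; in particular it is symmetric monoidal for $\otimes^L$ and, using $\scrO_\cZ \cong \scrO_Z \otimes_{\scrO_{X_R}}\scrO_\cX$ over $\hat q$, one gets $(q^*E)^\wedge \cong \hat q^*\hat E$ for $E \in D^b(X_R)$, with the analogous facts for $G$ on $X_R$ and $X^\prime_R$. Next, the decisive input: since $p$ is proper and $\cZ, \cX^\prime$ are the completions of $Z, X^\prime_R$ along the preimages of the closed point, the comparison theorem for proper morphisms \cite[III.4.1.5]{GD}, in its derived form, gives a natural isomorphism $(Rp_*\scrG)^\wedge \cong R\hat p_*(\scrG^\wedge)$ for every $\scrG \in D^b(Z)$; in particular $R\hat p_*(\scrG^\wedge) \in D^b(\cX^\prime)$, which settles well-definedness once we take $\scrG = \cE \otimes^L q^*E$ and use that $G$ is essentially surjective.

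Finally I would assemble the chain $G(\Phi_\cE(E)) = (Rp_*(\cE \otimes^L q^*E))^\wedge \cong R\hat p_*((\cE\otimes^L q^*E)^\wedge) \cong R\hat p_*(\hat\cE \otimes^L (q^*E)^\wedge) \cong R\hat p_*(\hat\cE \otimes^L \hat q^*\hat E) = \Phi_{\hat\cE}(G(E))$, where the first isomorphism is the comparison theorem, the second is monoidality of completion, and the third is $(q^*E)^\wedge \cong \hat q^*\hat E$. Since $G \colon D^b(X_R)\to D^b(\cX)$ is an equivalence by \cite[Corollary III.5.1.6]{GD}, every object of $D^b(\cX)$ is some $\hat E$, so this both shows $\Phi_{\hat\cE}$ takes values in $D^b(\cX^\prime)$ (hence is a genuine relative integral functor over $\Spf R$) and, by naturality of the three isomorphisms in $E$, produces the required natural isomorphism $G \circ \Phi_\cE \cong \Phi_{\hat\cE}\circ G$.

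The hard part will be the comparison theorem in exactly the form used: \cite[III.4.1.5]{GD} is stated over an affine noetherian adic base, whereas here I complete the \emph{nonaffine} scheme $X^\prime_R$ along its closed fiber $X^\prime_0$. I would handle this either by citing the general (noetherian, arbitrary ideal of definition) version of the comparison / Grothendieck existence package, or by restricting to an affine open cover of $X^\prime_R$, applying the affine statement to each member, and checking that the comparison isomorphisms are compatible with restriction to opens --- this compatibility is also what guarantees naturality in $E$ at the derived level, so it must be phrased in terms of the base-change and projection-formula maps rather than abstract isomorphisms. A secondary, routine point is to check, via the $n$-th order thickenings $X_n, X^\prime_n$ in the diagram, that $\hat q$ and $\hat p$ are indeed the two projections of $\cZ = \cX \times_R \cX^\prime$, so that $\Phi_{\hat\cE}$ is legitimately a relative integral functor.
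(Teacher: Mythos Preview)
Your proof is correct, but it follows a different route from the paper's. The paper does not invoke a single global derived comparison isomorphism $(Rp_*\scrG)^\wedge \cong R\hat p_*(\scrG^\wedge)$. Instead it works level by level on the tower of thickenings: using the comparison theorem in the form $L\tau'^{*}_n \circ R\hat p_* \cong Rp_{n*} \circ L\tau''^{*}_n$ together with ordinary derived base change along the closed immersions $\kappa_n,\kappa'_n,\kappa''_n$, it shows that both $\Phi_{\hat\cE}(\hat E)$ and $G(\Phi_\cE(E))$ restrict to $\Phi_{\cE_n}(E_n)$ on each $X'_n$, yielding a compatible system of isomorphisms $f_n$. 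It then appeals to the limit argument of \cite[Lemma~3.4]{HMS11} to assemble these into an isomorphism $f\colon \Phi_{\hat\cE}(\hat E)\to G(\Phi_\cE(E))$. Your argument packages all of this into one application of the derived comparison theorem plus the monoidality of completion; it is more conceptual and avoids the explicit limit step, at the price of needing the comparison theorem in derived form over the nonaffine target $X'_R$. On that last point your caution is somewhat excessive: EGA~III, Th\'eor\`eme~4.1.5 is already formulated for an arbitrary locally noetherian base, so the affine gluing you sketch is not strictly required, though you do still need to pass from the sheafwise statement for each $R^q p_*$ to the statement for $Rp_*$ on $D^b$. The paper's level-by-level approach, on the other hand, buys an explicit identification of both sides with $\Phi_{\cE_n}(E_n)$ on every infinitesimal thickening, which is convenient if one later wants to compare with the integral functors on the truncations.
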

\begin{proof}
Since objects of
$D^b (\cX^\prime), D^b(\cX \times_R \cX^\prime)$
are quasi-isomorphic to perfect complexes,
the functors
$\hat{q}^* \colon D^b(\cX) \to D^b(\cX \times_R \cX^\prime)$
and
$\hat{\cE} \otimes^L (-)
\colon
D^b(\cX \times_R \cX^\prime)
\to
D^b(\cX \times_R \cX^\prime)$
can be computed by termwise application
after replacing objects with perfect complexes.  
It is known that
$R \hat{p}_*$
is well-defined
and
sends bounded complexes to bounded complexes
by the comparison theorem
\cite[Corollary 4.1.7]{GD}
and
Leray spectral sequence.
Since by the equivalence
\pref{eq:Alg}
any object of
$D^b (\cX)$
can be written as
$\hat{E} = G(E) \cong L \iota^* E$
for some object
$E \in D^b (X_R)$,
we have
\begin{align*}
\Phi_{\hat{\cE}} (\hat{E}) \otimes^L_R R_n
&=
L \tau^{\prime *}_n R \hat{p}_* (\hat{\cE} \otimes^L_R L \hat{q}^* \hat{E}) \\
&\cong
R p_{n *} L \tau^{\prime \prime *}_n (\hat{\cE} \otimes^L_R L \hat{q}^* \hat{E}) \\
&\cong
R p_{n *} (\cE_n \otimes_{R_n} q^*_n E_n) \\
&\cong
\Phi_{\cE_n}(E_n),
\end{align*}
where
$R_n = R / \frakm^{n + 1}_R$,
$E_n = \tau^*_n \hat{E}$,
and
the first isomorphism follows from the comparison theorem.
We also have
\begin{align*}
\Phi_\cE (E) \otimes^L_R R_n
&=
L \kappa^{\prime *}_n R p_* (\cE \otimes^L_R L q^* E) \\
&\cong
R p_{n *} L \kappa^{\prime \prime *}_n (\cE \otimes^L_R L q^* E) \\
&\cong
R p_{n *} (\cE_n \otimes_{R_n} q^*_n E_n) \\
&\cong
\Phi_{\cE_n}(E_n).
\end{align*}
Thus we obtain isomorphisms
\begin{align*}
f_n
\colon
\Phi_{\hat{\cE}} (\hat{E}) \otimes_R R_n
\to
\Phi_\cE (E) \otimes_R R_n
\end{align*}
for any positive integer
$n$
satisfying
$f_{n + 1} \otimes^L_{R_{n + 1}} \id_{R_n} = f_n$.
Note that
$\Phi_{\hat{\cE}} (\hat{E})$
is the formal completion of some perfect complex on
$X^\prime_R$,
as it belongs to
$D^b(\cX^\prime) \simeq D^b(X^\prime_R)$.
Then,
by the argument in the proof of
\cite[Lemma 3.4]{HMS11},
taking the limit yields an isomorphism
\begin{align*}
f \colon \Phi_{\hat{\cE}} (\hat{E})
\to
G \left( \Phi_\cE (E) \right)
\end{align*}
which completes the proof. 
\end{proof}

\begin{cor} \label{cor:iDeq5} 
The functors
$\Phi_{\hat{\cE}}
\colon
D^b (\cX)
\to
D^b (\cX^\prime)$
and
$\Phi_{\hat{\cE}_K}
\colon
D^b (\Coh (\cX_K))
\to
D^b (\Coh (\cX^\prime_K))$
are equivalences,
where
$\hat{\cE}_K
\in
 D^b (\cX \times_R \cX^\prime) / D^b_0 (\cX \times_R \cX^\prime)$
is the image of
$\hat{\cE}$
by the natural projection. 
\end{cor}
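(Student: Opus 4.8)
The plan is to reduce both statements to Proposition \pref{prop:iDeq4} and the equivalence $G \colon D^b(X_R) \to D^b(\cX)$ of \pref{eq:Alg}. Since $\cE$ is a Fourier--Mukai kernel, the relative integral functor $\Phi_\cE \colon D^b(X_R) \to D^b(X^\prime_R)$ is an equivalence, and by the Grothendieck--Verdier duality together with uniqueness of left adjoints up to isomorphism its quasi-inverse is the relative integral functor $\Phi_{\cE_L}$ attached to a perfect complex $\cE_L$, exactly as in the proof of Proposition \pref{prop:iDeq2}. First I would apply the $2$-commutative square of Proposition \pref{prop:iDeq4} to $\cE$ and to $\cE_L$ to obtain $\Phi_{\hat{\cE}} \cong G \circ \Phi_\cE \circ G^{-1}$ and $\Phi_{\widehat{\cE_L}} \cong G \circ \Phi_{\cE_L} \circ G^{-1} \cong \Phi^{-1}_{\hat{\cE}}$. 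Since $G$ is an equivalence, this settles the first assertion and exhibits the quasi-inverse of $\Phi_{\hat{\cE}}$ as a relative integral functor whose kernel $\widehat{\cE_L}$ is the formal completion of a perfect complex.

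For the second assertion I would pass to the Verdier quotients as in the proof of Proposition \pref{prop:iDeq2}. Both $\Phi_{\hat{\cE}}$ and $\Phi_{\widehat{\cE_L}}$ are $R$-linear relative integral functors, hence carry complexes with $R$-torsion cohomology to such complexes; therefore $\Phi_{\hat{\cE}}$ restricts to an equivalence $D^b_0(\cX) \simeq D^b_0(\cX^\prime)$ and descends to an equivalence of Verdier quotients $D^b(\cX) / D^b_0(\cX) \simeq D^b(\cX^\prime) / D^b_0(\cX^\prime)$. Using that the objects of the quotient coincide with those of $D^b(\cX \times_R \cX^\prime)$ by \cite[Appendix]{HMS11}, this equivalence is an integral functor with kernel the image $\hat{\cE}_K$ of $\hat{\cE}$ by the natural projection. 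Finally, under \cite[Theorem 3.2]{Miy} together with Corollary \pref{cor:effectivize} the source and target are identified with $D^b(\Coh(\cX_K))$ and $D^b(\Coh(\cX^\prime_K))$, which yields the desired equivalence $\Phi_{\hat{\cE}_K}$.

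The step I expect to need the most care is the verification that an $R$-linear relative integral functor preserves the torsion subcategory $D^b_0$, so that the descent to the Verdier quotients is legitimate, and that the resulting functor is still an integral functor with the expected kernel. This, however, reduces to the bookkeeping already carried out for Proposition \pref{prop:iDeq2} and Lemma \pref{lem:VKlin}: a bounded complex with $R$-torsion cohomology is annihilated by a power of a single element of $R$, a property preserved by $\otimes^L$ and by $R p_*$ for an $R$-morphism, and the identification of morphisms in the quotient with localized $\Hom$-groups along $R \hookrightarrow K$ makes both the descended functor and the descended natural isomorphisms automatic. No geometric input beyond Proposition \pref{prop:iDeq4} is needed.
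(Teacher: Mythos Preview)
Your proposal is correct and matches the paper's intended argument: the corollary is stated without proof, as an immediate consequence of Proposition \pref{prop:iDeq4} together with the equivalence \pref{eq:Alg}, and the passage to the Verdier quotients for the second assertion is precisely the mechanism already spelled out in the proof of Proposition \pref{prop:iDeq2}. Your identification $D^b(\cX)/D^b_0(\cX) \simeq D^b(\Coh(\cX_K))$ via \cite[Theorem 3.2]{Miy} is exactly the one the paper uses, and the bookkeeping you flag about $R$-linear integral functors preserving the torsion subcategory is the same as in Proposition \pref{prop:iDeq2}; no additional input is needed.
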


\section{Specialization of derived equivalence}
In this section,
for flat proper families of $\bfk$-varieties over a common base we study the induced derived equivalence from their generic to general fibers.
The key is the ability of Corollary
\pref{cor:CGF}
to lift Fourier--Mukai kernels along the projection.
First,
although it is not strictly necessary for our purpose,
we extend Corollary
\pref{cor:CGF}
to nonaffine base case for flat proper families of $\bfk$-varieties with a smooth fiber.
It suffices to show that,
when restricted to general fibers,
the relative integral functor defined by the lift admits fully faithful left adjoints.
As in the proof of
\cite[Theorem 1.1]{Mor},
we show that
the associated counit morphism is a natural isomorphism.
However,
since in general the generic fiber is not a subscheme of a family,
we have to modify the proof as follows. 
Shrinking the base,
we remove torsion parts with respect to the base from
a fixed strong generator
and
its relevant Hom-sets.
Then we invoke some basic categorical results to see that
the value of the counit morphism on the trimmed strong generator is an isomorphism,
which implies that
the restriction of the counit morphism is a natural isomorphism.

\subsection{Lifts of Fourier--Mukai kernels}
Let
$\pi \colon X \to S$
be a flat proper morphism of $\bfk$-varieties.
Since
$S$
is integral,
the function field
$K = k(S)$
is given by local ring
$\scrO_{X, \xi}$
and
it coincides with the field of fractions
$Q(R)$
for any affine open $\bfk$-subvariety
$U = \Spec R$
\cite[Exercise II3.6]{Har}.
Hence we have the pullback diagram
\begin{align*}
\begin{gathered}
\xymatrix{
X_\xi \ar[r]^-{\bar{\iota}_\xi} \ar_{\pi_\xi}[d] & X \ar^{\pi}[d] \\
\Spec K \ar[r]_-{\iota_\xi} & S
}
\end{gathered}
\end{align*}
where
$\iota_\xi$
is the canonical morphism.

\begin{dfn}
The
\emph{categorical generic fiber}
of
$\pi \colon X \to S$
is the Verdier quotient
\begin{align*} 
D^b(X) / \Ker(\bar{\iota}^*_\xi),
\end{align*}
where
$\Ker(\bar{\iota}^*_\xi)$
is the kernel
\cite[Tag 05RF]{SP}
of the exact functor
$\bar{\iota}^*_\xi \colon D^b(X) \to D^b(X_\xi)$.
\end{dfn}

Recall that
for a smooth separated family
$\pi_R \colon X_R \to \Spec R$
over a nonsingular affine $\bfk$-variety,
by Corollary
\pref{cor:CGF}
there exists a $Q(R)$-linear exact equivalence
\begin{align*} 
D^b(X_R) / D^b_0(X_R)
\simeq
D^b(X_\xi).
\end{align*}
The above definition is an extension of this local description in the following sense.

\begin{thm} \label{thm:gbase}
Assume that
at least one of the closed fibers of
$\pi$
is smooth.
Then there exists a $K$-linear exact equivalence
\begin{align*} 
D^b(X) / \Ker(\bar{\iota}^*_\xi)
\simeq
D^b(X_\xi).
\end{align*}
\end{thm} 
\begin{proof}
Let
$[\bar{\iota}^*_\xi]
\colon
D^b(X) / \Ker(\bar{\iota}^*_\xi)
\to
D^b(X_\xi)$
be the unique functor
which makes the diagram
\begin{align} \label{eq:VQ1}
\begin{gathered}
\xymatrix{
D^b(X) \ar[r]^-{\bar{\iota}^*_\xi} \ar_-{Q}[dr] & D^b(X_\xi) \\
& D^b(X) / \Ker(\bar{\iota}^*_\xi) \ar_-{[\bar{\iota}^*_\xi]}[u]
}
\end{gathered}
\end{align}
commute, 
where
$Q \colon D^b(X) \to D^b(X) / \Ker(\bar{\iota}^*_\xi)$
is the quotient functor.
Take any affine open subset
$U = \Spec R \subset S$.
Let
$\pi_U \colon X_U \to U$
and
$\pi_Z \colon X_Z \to Z$
be the base changes to
$U$
and
its complement
$Z = S \setminus U$
respectively.
We have
\begin{align*}
\coh(X_U) \simeq \coh(X) / \coh_Z(X)
\end{align*}
where the right hand side is the Serre quotient by the Serre subcategory
$\coh_Z(X) \subset \coh(X)$
of sheaves supported on
$X_Z$.
Passing to the derived category,
via
\cite[Theorem 3.2]{Miy}
we obtain $U$-linear exact equivalence
\begin{align*}
D^b(X_U) \simeq D^b(X) / D^b_Z (X)
\end{align*}
where
$D^b_Z (X) \subset D^b(X)$
is the full $S$-linear triangulated subcategory with cohomology supported on
$X_Z$.
Since
$D^b_Z (X)$
is contained in
$\Ker(\bar{\iota}^*_\xi)$,
the commutative diagram
\pref{eq:VQ1}
extends to
\begin{align*}
\begin{gathered}
\xymatrix{
D^b(X) \ar[d]_-{\bar{\iota}^*_U} \ar[r]^-{\bar{\iota}^*_\xi} \ar_{Q}[dr] & D^b(X_\xi) \\
D^b(X_U) \ar[r]_-{[Q]} & D^b(X) / \Ker(\bar{\iota}^*_\xi). \ar_-{[\bar{\iota}^*_\xi]}[u]
}
\end{gathered}
\end{align*}

On the other hand,
the inclusion
$D^b_Z (X) \subset \Ker(\bar{\iota}^*_\xi)$
induces a commutative diagram
\begin{align} \label{eq:VQ2}
\begin{gathered}
\xymatrix{
D^b(X) \ar[d]_-{\bar{\iota}^*_U} \ar[r]^-{\bar{\iota}^*_\xi} & D^b(X_\xi) \\
D^b(X_U) \ar[ur]_-{Q_R} &
}
\end{gathered}
\end{align}
where
$Q_R \colon D^b(X_U) = D^b(X_R) \to D^b(X_\xi) \simeq D^b(X_R) / D^b_0(X_R)$
is the quotient functor.
Note that
shrinking
$U$
if necessary,
we may assume that
$\pi_R$
is smooth in order to apply Corollary
\pref{cor:CGF}.
Indeed,
by
\cite[Theorem I5.3]{Har}
the singular locus of
$X_R$
is a proper closed subset,
whose image under
$\pi_R$
is a proper closed subset of
$\Spec R$.
Note that by assumption there is at least one closed point of
$\Spec R$
which is not contained in the image of the singular locus.
Changing the base to its complement,
we may assume that
$X_R$
is nonsingular.
Then one can apply
\cite[Corollary III10.7]{Har}
to find an open subset of
$\Spec R$
over which the restriction of
$\pi$
becomes smooth.
Since
$D^b_0(X_R)$
is contained in
$\Ker([Q])$,
the commutative diagram
\pref{eq:VQ2}
extends to
\begin{align*}
\begin{gathered}
\xymatrix{
D^b(X) \ar[d]_-{\bar{\iota}^*_U} \ar[r]^-{\bar{\iota}^*_\xi} & D^b(X_\xi) \ar^-{[[Q]]}[d]\\
D^b(X_U) \ar[ur]_-{Q_R} \ar[r]_-{[Q]} & D^b(X) / \Ker(\bar{\iota}^*_\xi)
}
\end{gathered}
\end{align*}
with unique
$[[Q]]$.
Thus we obtain a commutative diagram
\begin{align*}
\begin{gathered}
\xymatrix{
D^b(X) \ar[r]^-{Q} \ar_-{Q}[dr] & D^b(X) / \Ker(\bar{\iota}^*_\xi) \\
& D^b(X) / \Ker(\bar{\iota}^*_\xi). \ar_-{[[Q]] \circ [\bar{\iota}^*_\xi]}[u]
}
\end{gathered}
\end{align*}
By universality of Verdier quotient,
the composition
$[[Q]] \circ [\bar{\iota}^*_\xi]$
is natural isomorphic to the identity functor.
Hence
$[\bar{\iota}^*_\xi]$
is an equivalence.
\end{proof}

\begin{rmk}
The above theorem is a direct consequence of
\cite[Theorem 3.2]{Miy}
and
the $K$-linear equivalence
\begin{align*}
\coh(X) / \Ker(\bar{\iota}^*_\xi)
\simeq
\coh(X_\xi),
\end{align*}
which can be deduced by the similar argument.
Here,
we use the same symbol
$\Ker(\bar{\iota}^*_\xi)$
to denote the kernel
\cite[Tag 02MR]{SP}
of the exact functor
$\bar{\iota}^*_\xi$
of abelian categories.
In particular,
\pref{thm:esurj}
also extends to nonaffine base case for flat proper families of $\bfk$-varieties.
\end{rmk}

\begin{cor}
Assume that
at least one of the closed fibers of
$\pi$
is smooth.
Then any object of 
$D^b(X_\xi)$
can be lifted to that of
$D^b(X)$
along the projection
$Q$.
\end{cor} 

\subsection{Basic categorical results}
\begin{lem} \label{lem:counit1}
Let
$\scrC, \scrD$
be small categories
and
$F \colon \scrC \to \scrD, G \colon \scrD \to \scrC$
functors with
$F \dashv G$.
If there exists an object
$D \in \scrD$
such that
the canonical maps
\begin{align*}
\Hom(D, D) \to \Hom(G(D), G(D)), \
\Hom(D, FG(D) \to \Hom(G(D), GFG(D))
\end{align*}
are bijective,
then the counit morphism
$\epsilon \colon FG \Rightarrow 1_\scrD$
induces an isomorphism
$\epsilon_D \colon FG(D) \to D$.
\end{lem}
\begin{proof}
Let
$\alpha_D, \alpha_{FG(D)}$
be the compositions
\begin{align*}
\begin{gathered}
\Hom(D, D) \to \Hom(G(D), G(D)) \to \Hom(FG(D), D), \\
\Hom(D, FG(D) \to \Hom(G(D), GFG(D)) \to \Hom(FG(D), FG(D))
\end{gathered}
\end{align*}
of the canonical maps.
By assumption
and
definition of adjoint functors
$\alpha_D, \alpha_{FG(D)}$
are bijective.
We denote
$\epsilon_D = \alpha_D(1_D)$
by
$f$
and
$\alpha^{-1}_{FG(D)}(1_{FG(D)})$
by
$g$.
Consider the diagrams
\begin{align*}
\begin{gathered}
\xymatrix{
\Hom(D, D) \ar[r]^-{\alpha_D} \ar_{g \circ}[d] & \Hom(FG(D), D) \ar^{g \circ}[d] & \Hom(D, FG(D)) \ar[r]^-{\alpha_{FG(D)}}\ar_{f \circ}[d] & \Hom(FG(D), FGD) \ar^{f \circ}[d] \\
\Hom(D, FG(D)) \ar[r]_-{\alpha_{FG(D)}} & \Hom(FG(D), FGD), & \Hom(D, D) \ar[r]_-{\alpha_D} & \Hom(FG(D), D),
}
\end{gathered}
\end{align*}
which are commutative by
definition of functors
and
naturality of adjoints. 
As expressions of the images of
$1_D, g$
we respectively obtain
\begin{align*}
\begin{gathered}
1_{FG(D)}
=
\alpha_{FG(D)}(\alpha^{-1}_{FG(D)}(1_{FG(D)}))
=
\alpha_{FG(D)}(g)
=
g(\alpha_{D}(1_D))
=
gf, \\
\alpha_D(fg)
=
f(\alpha_{FG(D)}(g))
=
f(\alpha_{FG(D)}(\alpha^{-1}_{FG(D)}(1_{FG(D)})))
=
f.
\end{gathered}
\end{align*}
From the second line it follows
$fg = \alpha^{-1}_D(f) = 1_D$.
Hence
$f$
is an isomorphism.
\end{proof}

\begin{rmk}
The above proof is just an adaptation of the proof of the fact that
$G$
is fully faithful
if and only if
$\epsilon$
is a natural isomorphism.
\end{rmk}

Similarly,
one can prove the dual statement.

\begin{lem} \label{lem:unit1}
Let
$\scrC, \scrD$
be small categories
and
$F \colon \scrC \to \scrD, G \colon \scrD \to \scrC$
functors with
$F \dashv G$.
If there exists an object
$C \in \scrC$
such that
the canonical maps
\begin{align*}
\Hom(C, C) \to \Hom(F(D), F(D)), \
\Hom(GF(C), C) \to \Hom(FGF(C), F(C))
\end{align*}
are bijective,
then the unit morphism
$\eta \colon 1_\scrC \Rightarrow GF$
induces an isomorphism
$\eta_C \colon C \to GF(C)$.
\end{lem}

\begin{rmk}
Again,
the proof is just an adaptation of the proof of the fact that
$F$
is fully faithful
if and only if
$\eta$
is a natural isomorphism.
\end{rmk}

\subsection{Removal of torsion supports}
Let
$\pi_R \colon X \to \Spec R, \pi^\prime_R \colon X^\prime \to \Spec R$
be a smooth proper morphisms to a nonsingular affine $\bfk$-variety.
Assume that
their generic fibers
$X_K, X^\prime_K$
are derived-equivalent.
Let
$\Phi_{\cE_K} \colon D^b(X_K) \to D^b(X^\prime_K)$
be a Fourier--Mukai functor giving the equivalence
with kernel
$\cE_K \in D^b(X_K \times X^\prime_K)$.
Fix a lift
$\cE \in D^b(X \times_R X^\prime)$
of
$\cE_K$
along the projection
\begin{align*}
D^b(X \times_R X^\prime)
\to
D^b(X \times_R X^\prime) / D^b_0 (X \times_R X^\prime)
\simeq
D^b(X_K \times X^\prime_K)
\end{align*}
and
a strong generator
$E$
of
$D^b(X)$.

\begin{lem} \label{lem:counit2}
There exists a nonempty affine open subset
$U \subset \Spec R$
over which the restriction
\begin{align*}
\Phi_U
=
\Phi_{\cE_U}
\colon
D^b(X_U)
\to
D^b(X^\prime_U)
\end{align*}
induces bijections
\begin{align} \label{eq:counit}
\begin{gathered}
\Hom(E_U, E_U)
\to
\Hom(\Phi_U(E_U), \Phi_U(E_U)), \\
\Hom(E_U, \Phi^L_U \Phi_U(E_U))
\to
\Hom(\Phi_U(E_U), \Phi_U \Phi^L_U \Phi_U(E_U))
\end{gathered}
\end{align}
where
$\Phi^L_U \colon D^b(X^\prime_U) \to D^b(X_U)$
is the left adjoint to
$\Phi_U$.
\end{lem}
\begin{proof}
By
\cite[Theorem 2.1.2, Lemma 3.4.1]{BV}
the base change
$E_K$
of
$E$
along the canonical inclusion
$R \hookrightarrow K$
is a strong generator of
$D^b(X_K)$,
which cannot be trivial.
Consider the support of the $R$-torsion part of
$E$,
i.e.,
the union
$\bigcup_{i} \supp \cH^i(E)_{tors}$
of the supports of the $R$-torsion parts
$\cH^i(E)_{tors}$
of
$\cH^i(E)$.
Each
$\cH^i(E)_{tors}$
is a coherent sheaf on
$X$,
as every submodule of a finitely presented module over a noetherian ring is finitely presented. 
Since the union is finite,
$\bigcup_{i} \supp \cH^i(E)_{tors}$
is a closed subset of
$X$.
Its complement must contain the generic point of
$X$,
otherwise
$E_K$
is trivial.
Let
$U \subset \Spec R$
be the image of the complement under
$\pi_R$,
which is a nonempty open subset.
By construction over
$U$
the restriction
$E_U = E |_{\pi^{-1}_U(U)}$
is $\scrO_U(U)$-torsion free. 
Since
$\Hom(E_U, E_U)$
is coherent as an $\scrO_U(U)$-module by
\pref{lem:coh}
below,
shrinking
$U$
if necessary,
we may assume that
it is $\scrO_U(U)$-torsion free.
The tensor product
$\Phi_U(E_U) \otimes_{\scrO_U(U)} K$
cannot be trivial,
otherwise it does not map to an object quasi-isomorphic to
$E_K$
under
$\Phi^{-1}_K$.
Hence by the same argument one finds an affine open subset
$V \subset \Spec R$
over which
the restriction
$\Phi_V (E_V)
\cong
\Phi_\cE (E) |_{\pi^{\prime -1}_V(V)}$
and
$\Hom(\Phi_V(E_V), \Phi_V(E_V))$
are $\scrO_V(V)$-torsion free.
The intersection
$U \cap V \subset \Spec R$
is a nonempty open subset,
as
$\Spec R$
is integral.
Now,
we replace
$U \cap V$
with
$U$.

Consider a map
\begin{align} \label{eq:S}
\Hom(E_U, E_U)
\xrightarrow{\Phi_U}
\Hom(\Phi_U(E_U), \Phi_U(E_U))
\end{align}
of $\scrO_U(U)$-modules.
By assumption
and
\pref{lem:VKlin}
tensoring
$K$
yields an exact sequence
\begin{align} \label{eq:ES}
0
\to
\Hom(E_U, E_U) \otimes_{\scrO_U(U)} K
\xrightarrow{\Phi_U \otimes_{\scrO_U(U)} K}
\Hom(\Phi_U(E_U), \Phi_U(E_U)) \otimes_{\scrO_U(U)} K
\to
0.
\end{align}
Hence the map
$\Phi_U$
in
\pref{eq:S}
is injective,
as
$\Hom(E_U, E_U)$
is $\scrO_U(U)$-torsion free.
The associated sheaf with the cokernel
\begin{align*}
\Hom(\Phi_U(E_U), \Phi_U(E_U)) / \Hom(E_U, E_U)
\end{align*}
might have nontrivial $\scrO_U(U)$-torsion part.
However,
since by
\pref{lem:coh}
below it is coherent,
one finds an affine open subset
$W \subset U$
to which the restriction of the associated sheaf is $\scrO_W(W)$-torsion free.
Now,
we replace
$W$
with
$U$.
Then the exactness of
\pref{eq:ES}
implies that of
\pref{eq:S}.
Hence the map
$\Phi_U$
in
\pref{eq:S}
is bijective.
Shrinking
$U$
if necessary,
by the same argument we conclude that
\begin{align*}
\Hom(E_U, \Phi^L_U \Phi_U(E_U))
\xrightarrow{\Phi_U}
\Hom(\Phi_U(E_U), \Phi_U \Phi^L_U \Phi_U(E_U))
\end{align*}
is also bijective.
\end{proof}

\begin{rmk}
Note that the base change
\begin{align} \label{eq:BC}
(\Hom(\Phi_U(E_U), \Phi_U(E_U)) / \Hom(E_U, E_U)) \otimes_{\scrO_U(U)} \scrO_W(W)
\end{align}
is isomorphic to the cokernel of the sequence
\begin{align*}
0
\to
\Hom(E_W, E_W)
\xrightarrow{\Phi_W}
\Hom(\Phi_W(E_W), \Phi_W(E_W))
\to
0.
\end{align*}
Indeed,
\pref{eq:BC}
is isomorphic to
\begin{align*}
\Hom(\Phi_U(E_U), \Phi_U(E_U)) \otimes_{\scrO_U(U)} \scrO_W(W)  / \Hom(E_U, E_U) \otimes_{\scrO_U(U)} \scrO_W(W) 
\end{align*}
as the pullback by an open immersion is exact.
Consider the pullback diagrams
\begin{align*}
\begin{gathered}
\xymatrix{
X_W \ar@{^{(}->}[r]^-{\bar{\iota}} \ar_{\pi_W}[d] & X_U \ar^{\pi_U}[d] & X^\prime_W \ar@{^{(}->}[r]^-{\bar{\iota}^\prime}\ar_{\pi^\prime_W}[d] & X^\prime_U \ar^{\pi^\prime_U}[d] \\
W \ar@{^{(}->}[r]_-{\iota} & U, & W \ar@{^{(}->}[r]_-{\iota} & U.
}
\end{gathered}
\end{align*}
By the derived flat base change we have
\begin{align*}
\begin{gathered}
\iota^* R \Hom^\bullet(E_U, E_U)
\cong
\iota^* R \pi_{U *} R \underline{\Hom}^\bullet(E_U, E_U)
\cong
R \Hom^\bullet(E_W, E_W), \\
\iota^{\prime *} R \Hom^\bullet(\Phi_U(E_U), \Phi_U(E_U))
\cong
\iota^{\prime *} R \pi^\prime_{U *} R \underline{\Hom}^\bullet(\Phi_U(E_U), \Phi_U(E_U))
\cong
R \Hom^\bullet(\Phi_W(E_W), \Phi_W(E_W)). 
\end{gathered}
\end{align*}
Passing to the $0$-th cohomology of complexes,
we obtain
\begin{align*}
\begin{gathered}
\Hom(E_U, E_U) \otimes_{\scrO_U(U)} \scrO_W(W)
\cong
\Hom(E_W, E_W), \\
\Hom(\Phi_U(E_U), \Phi_U(E_U)) \otimes_{\scrO_U(U)} \scrO_W(W)
\cong
\Hom(\Phi_W(E_W), \Phi_W(E_W))
\end{gathered}
\end{align*}
as
$\iota^*, \iota^{\prime *}$
are exact.
\end{rmk}

\begin{lem} \label{lem:coh}
For any object
$E, F \in D^b(X)$
the $R$-module
$\Hom(E, F) = \Ext^0_X(E, F)$
is coherent.
\end{lem}
\begin{proof}
This is a standard fact.
\end{proof}

Similarly,
one can prove the dual statement.

\begin{lem} \label{lem:unit2}
There exists a nonempty affine open subset
$U \subset \Spec R$
over which the restriction
\begin{align*}
\Phi^L_U
=
\Phi^L_{\cE_U}
\colon
D^b(X^\prime_U)
\to
D^b(X_U)
\end{align*}
induces bijections
\begin{align} \label{eq:unit}
\begin{gathered}
\Hom(E^\prime_U, E^\prime_U)
\to
\Hom(\Phi^L_U(E^\prime_U), \Phi^L_U(E^\prime_U)), \\
\Hom(\Phi_U \Phi^L_U (E^\prime_U), E^\prime_U)
\to
\Hom(\Phi^L_U \Phi_U \Phi^L_U (E^\prime_U), \Phi^L_U (E^\prime_U)).
\end{gathered}
\end{align}
\end{lem}

\subsection{Specialization}
\begin{thm} \label{thm:main}
There exists a nonempty affine open subset
$U \subset \Spec R$
over which the restriction
\begin{align*}
\Phi_U
=
\Phi_\cE |_{\pr^{-1}_1 \circ \pi^{-1}_U(U)}
\colon
D^b(X_U)
\to
D^b(X^\prime_U)
\end{align*}
becomes an $\scrO_U(U)$-linear exact equivalence.
In particular,
over
$U$
any pair of closed fibers are derived-equivalent.
\end{thm}
\begin{proof}
The proof is an adaptation of the argument in the proof of
\cite[Theorem 1.1]{Mor}.
Fix a strong generator
$E$
of
$D^b (X)$.
The counit morphism
$\epsilon \colon \Phi^L_\cE \circ \Phi_\cE \to \id_{D^b (X)}$
gives a distinguished triangle
\begin{align} \label{eq:DT}
\Phi^L_\cE \circ \Phi_\cE (E) \xrightarrow{\epsilon_E} E \to F \coloneqq \Cone \left( \epsilon \left( E \right) \right).
\end{align}
Over any open subset
$U \subset \Spec R$,
\pref{eq:DT}
restricts to a distinguished triangle
\begin{align*}
\Phi^L_U \circ \Phi_U (E_U) \xrightarrow{\epsilon_{E_U}} E_U \to F_U.
\end{align*}
Note that the restriction of the counit morphism is the counit morphism.
Choose
$U$
so that
we have the bijections
\pref{eq:counit}
from
\pref{lem:counit2}.
Then by
\pref{lem:counit1}
the counit morphism
$\epsilon_{E_U}$
on
$E_U$
is an isomorphism.
Since
$E_U$
is a strong generator of
$D^b(X_U)$
by
\cite[Theorem 2.1.2, Lemma 3.4.1]{BV},
this implies that
$\Phi_U$
is fully faithful.
Recall that
a triangulated category is strongly finitely generated
if there exist an object
$E_U$
and
a nonnegative integer
$k$
such that
every object can be obtained from
$E_U$
by taking
isomorphisms,
finite direct sums,
direct summands,
shifts,
and
not more than
$k$
times cones.
Now,
we may assume that
$E_U$
has no nontrivial direct summands,
as
$\Phi_U$
and
$\Phi^L_U$
commute with direct sums on
$D^b(X_U)$
by
\cite[Corollary 3.3.4]{BV}.
Since
$\epsilon_{E_U}$
is an isomorphism,
one inductively sees that 
over
$U$
the counit morphism on any object is an isomorphism.

Fix a strong generator
$E^\prime$
of
$D^b (X^\prime)$.
The unit morphism
$\eta \colon \id_{D^b (X^\prime)} \to \Phi_\cE \circ \Phi^L_\cE$
gives a distinguished triangle
\begin{align} \label{eq:DT2}
E^\prime \xrightarrow{\eta_{E^\prime}} \Phi_\cE \circ \Phi^L_\cE(E^\prime) \to F^\prime \coloneqq \Cone (\eta_{E^\prime}).
\end{align}
Over any open subset
$U \subset \Spec R$,
\pref{eq:DT2}
restricts to a distinguished triangle
\begin{align*}
E^\prime_U \xrightarrow{\eta_{E^\prime_U}} \Phi_U \circ \Phi_U (E^\prime_U) \to F^\prime_U.
\end{align*}
Note that the restriction of the unit morphism is the unit morphism.
Choose
$U$
so that
we have the bijections
\pref{eq:unit}
from
\pref{lem:unit2}.
Then by
\pref{lem:unit1}
the unit morphism
$\eta_{E^\prime_U}$
on
$E^\prime_U$
is an isomorphism.
Since
$E^\prime_U$
is a strong generator of
$D^b(X^\prime)$
by
\cite[Theorem 2.1.2, Lemma 3.4.1]{BV},
this implies that
$\Phi^L_U$
is fully faithful.
Shrinking
$U$
if necessary,
we may assume that
over
$U$
both
$\Phi_U$
and
$\Phi^L_U$
are fully faithful.
Then
$\Phi_U$
is an equivalence,
as a fully faithful functor
which admits a fully faithful left adjoint
is an equivalence.
\end{proof}

\begin{rmk}
If
$\Phi_\cE$
induces the derived equivalence of a single pair of closed fibers,
then there exists a nonempty Zariski open subset
$U \subset \Spec R$
such that the base changes
$X_U, X^\prime_U$
are derived-equivalent.
This follows from the proof of
\cite[Theorem 1.1]{Mor},
which exploits the fact that
the restriction of the counit morphism
$\epsilon_E
\colon
\Phi^L_\cE \circ \Phi_\cE(E) \to E$
to any closed fiber is the counit morphism for each object
$E \in D^b (X)$.
However,
it does not work for the generic fiber.
In general,
the generic fiber is not a subscheme of
$X_R$,
while any closed fiber can naturally be regarded as a subscheme of
$X_R$
via the reduced induced structure on the image of the closed immersion.
\end{rmk}

\begin{cor} \label{cor:1.4}
Let
$\pi \colon X \to S, \pi^\prime \colon X^\prime \to S$
be flat proper morphisms of $\bfk$-varieties.
Assume that
their generic fibers
$X_K, X^\prime_K$
are derived-equivalent.
Assume further that
at least one of the closed fibers of each
$\pi$
and
$\pi^\prime$
are smooth.
Then there exists a nonempty open subset
$U \subset S$
to which the base changes
$X_U, X^\prime_U$
become derived-equivalent.
In particular,
over
$U$
any pair of closed fibers are derived-equivalent.
\end{cor}
\begin{proof}
By the same argument as in the proof of
\pref{thm:gbase}
we may assume that
$\pi, \pi^\prime$
are smooth.
Let
$\Phi_{\cE_K} \colon D^b(X_K) \to D^b(X^\prime_K)$
be a Fourier--Mukai functor giving the equivalence
with kernel
$\cE_K \in D^b(X_K \times X^\prime_K)$.
Fix a lift
$\cE \in D^b(X \times_S X^\prime)$
of
$\cE_K$
along the projection
\begin{align*}
D^b(X \times_S X^\prime)
\to
D^b(X \times_S X^\prime) / D^b_0 (X \times_S X^\prime)
\simeq
D^b(X_K \times X^\prime_K).
\end{align*}
Take an affine open cover
$\bigcup^N_{i = 1} \Spec R_i$
of
$S$.
One can apply
\pref{thm:main}
to find open subsets
$U_i \subset \Spec R_i$
over which the restrictions
\begin{align*}
\Phi_{U_i}
=
\Phi_{\cE_{U_i}}
\colon
D^b(X_{U_i})
\to
D^b(X^\prime_{U_i})
\end{align*}
becomes $\scrO_{U_i}(U_i)$-linear exact equivalences.
Let
$V = \bigcup^N_{i=1} U_i$
be their union,
which is an open $\bfk$-subvariety of
$S$.
Consider the restriction
\begin{align*}
\Phi_{V}
=
\Phi_{\cE_V}
\colon
D^b(X_V)
\to
D^b(X^\prime_V)
\end{align*}
over
$V$.
Since its restriction to any pair of closed fibers over
$V$
defines an equivalence,
$\Phi_V$
is an equivalence by
\cite[Proposition 2.15]{HLS}.
\end{proof}

\section{Fourier--Mukai partners over the closure of function fields}
In this section,
passing through the deformation theory,
we provide new examples of Fourier--Mukai partners.
The above results play a role in deducing the derived equivalences.
Let
$X_0, X_0^\prime$
be derived-equivalent Calabi--Yau manifolds
of dimension more than two.
There exists a nonsingular affine $\bfk$-variety
$\Spec S$
such that
smooth projective versal deformations
$X_S, X^\prime_S$
over
$S$
are derived-equivalent
\cite[Theorem 1.1]{Mor}.
Also effectivizations
$X_R, X^\prime_R$
are shown to be derived-equivalent.
These equivalences are given by deformed Fourier--Mukai kernels.
From our results in Section
$4$
it immediately follows that
their generic fibers are derived-equivalent.
One can check that
they are Calabi--Yau manifolds.
If
$X_0, X^\prime_0$
satisfy either
$\NS_{tor} X_0 \neq \NS_{tor} X^\prime_0$,
or
$\rho (X_0) = \rho (X^\prime_0) = 1$
and
$\deg (X_0) \neq \deg (X^\prime_0)$,
then the generic fibers are nonbirational
as well as
$X_0, X^\prime_0$.
Several pairs are known to satisfy either of the two conditions.
Thus we obtain new examples of Fourier--Mukai partners over the function fields
$K, Q$.
One may also pass to the base changes over a smooth connected $\bfk$-curve containing
$X_0, X^\prime_0$
to apply
\cite[Theorem 1.1]{KT}.
Similar results hold for the geometric generic fibers.
If
$\bfk$
is a universal domain,
then
$\bar{Q}$
is isomorphic to
$\bfk$
but
the Fourier--Mukai partner
$X_{\bar{Q}}, X^\prime_{\bar{Q}}$
are different as a $\bfk$-variety from known examples.
We demonstrate this subtle difference
when
$X_0, X^\prime_0$
are the famous Pfaffian--Grassmannian pair.

\subsection{Deformations of Calabi--Yau manifolds}
Let
$X_0$
be a Calabi--Yau manifold with dimension more than two.
The deformation functor
\begin{align*}
\Def_{X_0} \colon \Art \to \Set
\end{align*}
of
$X_0$
has a universal formal family
$(R, \xi)$,
where
$R$
is a formal power series ring with
$\dim_{\bfk} \text{H}^1 (X_0, \scrT_{X_0})$
valuables,
and
$\xi$ belongs to the limit 
\begin{align*}
\widehat{\Def}_{X_0} (R)
=
\displaystyle \lim_{\longleftarrow} \Def_{X_0} (R / \frakm^n_R)
\end{align*}
of the inverse system
\begin{align*}
\cdots \to \Def_{X_0} (R/\frakm_R^{n+2}) \to \Def_{X_0} (R/\frakm_R^{n+1}) \to \Def_{X_0} (R/\frakm_R^n) \to \cdots
\end{align*}
induced by the natural quotient maps
$R/\frakm^{n+1}_R \to R/\frakm^n_R$.
Here,
$\frakm_R \subset R$
is the maximal ideal.

The formal family
$\xi$
corresponds to a natural transformation 
\begin{align*}
h_R = \Hom_{\operatorname{\bfk-alg}} (R, -) \to \Def_{X_0},
\end{align*}
which sends each homomorphism
$f \in h_R (A)$
factorizing through
\begin{align*}
R \to R / \frakm^{n+1}_R \xrightarrow{g} A
\end{align*}
to
$\Def_{X_0} (g) (\xi_n)$.
Let
$X_n$
be the schemes defining
$\xi_n$.
There is a noetherian formal scheme
$\cX$
over
$R$
such that
$X_n \cong \cX \times_R R / \frakm^{n+1}_R$
for each
$n$.
Thus for a deformation
$(X_A, i_A)$
the scheme
$X_A$
can be obtained as the pullback of
$\cX$
along some morphism of noetherian formal schemes
$\Spec A \to \Spf R$.

Now,
we briefly recall how to algebrize
$\cX$.
By
\cite[Theorem III5.4.5]{GD}
there exists a scheme
$X_R$
flat projective over
$R$
whose formal completion along the closed fiber
$X_0$
is isomorphic to
$\cX$.
Moreover,
$X_R$
is smooth over
$R$
of relative dimension
$\dim X_0$
\cite[Lemma 2.4]{Mor}.
We call 
$X_R$
an effectivization
of
$\cX$.
Consider the extended functor 
\begin{align*}
\Def_{X_0}
\colon
\operatorname{\textbf{k-alg}^\text{aug}}
\to
\Set
\end{align*}
from the category of augmented noetherian $\bfk$-algebras.
Let
$T = \bfk [ t_1, \ldots, t_d ]$
and
$t \in \Spec T$
be the closed point corresponding to maximal ideal
$(t_1, \ldots, t_d)$.
There is a filtered inductive system
$\{ R_i \}_{i \in I}$
of finitely generated $T$-subalgebras of
$R$
whose colimit is
$R$.
Since
$\Def_{X_0}$
is locally of finite presentation,
$[ X_R, i_R ]$
is the image of some element
$\zeta_i
\in
\Def_{X_0} \left( \left( R_i, \frakm_{R_i} \right) \right)$
by the canonical map
$\Def_{X_0} \left( \left( R_i, \frakm_{R_i} \right) \right)
\to
\Def_{X_0} (R)$.
By
\cite[Corollary 2.1]{Art69a}
there exists an \'etale neighborhood
$\Spec S$
of
$t$
in
$\Spec T$
with
first order approximation
$\varphi \colon R_i \to S$
of
$R_i \hookrightarrow R$.
Let
$[ X_S, i_S ]$
be the image of
$\zeta_i$
by the map
$\Def_{X_0} (\varphi)$.
The formal completion of
$X_S$
along the closed fiber
$X_0$
is isomorphic to
$\cX$. 

The triple
$(\Spec S, s_0, X_S)$,
or sometimes
$X_S$,
is called a versal deformation of
$X_0$.
By construction,
$\Spec S$
is an algebraic $\bfk$-variety with a distinguished closed point
$s_0$
mapping to
$t$,
and
$X_S$
is flat of finite type over
$S$
whose closed fiber
over
$s_0$
is
$X_0$.
In our setting,
one can find a versal deformation
$X_S$
smooth projective over a nonsingular affine $\bfk$-variety
$\Spec S$
of relative dimension
$\dim X_0$
\cite[Lemma 2.3]{Mor}. 
Moreover,
given another Calabi--Yau manifold
$X^\prime_0$
derived-equivalent to
$X_0$,
one can find a smooth projective versal deformation
$X^\prime_S$
over the same base.
See
\cite[Section 3]{Mor}
for the detail.
The point is the facts that
by
\cite{CW}
the derived equivalence induces an isomorphism
\begin{align*}
\text{H}^1 (X_0, \scrT_{X_0})
=
\text{HH}^2(X_0)
\cong
\text{HH}^2(X^\prime_0)
=
\text{H}^1 (X^\prime_0, \scrT_{X^\prime_0})
\end{align*}
and
by Bogomolov-Tian-Todorov theorem deformations of Calabi--Yau manifolds are unobstructed. 
The construction passes through effectivizations.
Namely,
there are effectivizations
$X_R, X^\prime_R$
of
$X_0, X^\prime_0$
over the same regular affine $\bfk$-scheme
$\Spec R$.

\subsection{Calabi--Yau geometric generic fibers}
First,
we consider the effectivization
$X_R$
of
$\cX$
and
its geometric generic fiber.
We have the pullback diagram
\begin{align*}
\begin{gathered}
\xymatrix{
X_{\bar{K}} \ar[r]^-{\bar{i}} \ar_{\pi_{\bar{K}}}[d] & X_K \ar[r]^-{i} \ar_{\pi_K}[d] & X_R \ar^{\pi_R}[d] \\
\Spec \bar{K} \ar[r]_-{\bar{j}} & \Spec K \ar[r]_-{j} & \Spec R.
}
\end{gathered}
\end{align*}

\begin{lem}
The geometric generic fiber
$X_{\bar{K}}$
is a Calabi--Yau manifold over
$\bar{K}$.
\end{lem}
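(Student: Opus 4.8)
The plan is to check the defining conditions of a Calabi--Yau manifold for $X_{\bar K}$: that it is a smooth projective $\bar K$-variety (in particular integral), that $\omega_{X_{\bar K}/\bar K}$ is trivial, and that $\text{H}^i(X_{\bar K},\scrO_{X_{\bar K}})=0$ for $0<i<\dim X_{\bar K}$. The geometric part is formal: since $\pi_R\colon X_R\to\Spec R$ is smooth and projective of relative dimension $\dim X_0$ (recalled in Section~\ref{sec:4.1}), base change along $\Spec\bar K\to\Spec R$ shows $X_{\bar K}$ is smooth and projective over $\bar K$ of dimension $\dim X_0$; being smooth over a field it is regular and Noetherian, hence integral once it is known to be connected. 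For connectedness I would use upper semicontinuity of $t\mapsto\dim_{\kappa(t)}\text{H}^0(X_t,\scrO_{X_t})$ on $\Spec R$: this equals $1$ at the closed point because $\text{H}^0(X_0,\scrO_{X_0})=\bfk$, hence it is $\le 1$ at the generic point $\xi$, hence $=1$; flat base change along $K\hookrightarrow\bar K$ then gives $\text{H}^0(X_{\bar K},\scrO_{X_{\bar K}})=\bar K$, which forces $X_{\bar K}$ to be connected.

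For the canonical bundle I would work on the total space. The relative dualizing sheaf $\omega_{\pi_R}=\wedge^{\dim X_0}\Omega_{X_R/R}$ is a line bundle on $X_R$ whose restriction to the closed fiber is $\omega_{X_0}\cong\scrO_{X_0}$, since Kähler differentials commute with base change and $X_0$ is Calabi--Yau. Writing $X_n=X_R\times_R R/\frakm_R^{n+1}$, the square-zero thickenings $X_n\hookrightarrow X_{n+1}$ yield exact sequences $0\to\scrO_{X_0}\otimes_\bfk(\frakm_R^{n+1}/\frakm_R^{n+2})\to\scrO_{X_{n+1}}^\times\to\scrO_{X_n}^\times\to 1$ of sheaves of abelian groups on the common underlying space, whose long exact cohomology sequences contain $\text{H}^1(X_0,\scrO_{X_0})^{\oplus r}\to\Pic(X_{n+1})\to\Pic(X_n)\to\text{H}^2(X_0,\scrO_{X_0})^{\oplus r}$. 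Since $\dim X_0>2$, the Calabi--Yau hypothesis forces $\text{H}^1(X_0,\scrO_{X_0})=\text{H}^2(X_0,\scrO_{X_0})=0$, so every restriction map $\Pic(X_{n+1})\to\Pic(X_n)$ is an isomorphism; consequently the restriction $\Pic(X_R)\cong\Pic(\cX)=\varprojlim_n\Pic(X_n)\to\Pic(X_0)$ is injective, where the first identification is the tensor-compatible equivalence \pref{eq:Alg}. As $\omega_{\pi_R}$ and $\scrO_{X_R}$ have the same trivial restriction to $X_0$, this yields $\omega_{\pi_R}\cong\scrO_{X_R}$, and pulling back along $X_{\bar K}\to X_R$, under which $\omega_{\pi_R}$ becomes $\omega_{X_{\bar K}/\bar K}$, gives $\omega_{X_{\bar K}/\bar K}\cong\scrO_{X_{\bar K}}$.

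For the intermediate cohomology I would invoke upper semicontinuity once more: for $0<i<\dim X_0$ the function $t\mapsto\dim_{\kappa(t)}\text{H}^i(X_t,\scrO_{X_t})$ vanishes at the closed point of $\Spec R$, hence at the generic point $\xi$, so $\text{H}^i(X_K,\scrO_{X_K})=0$, and flat base change along $K\hookrightarrow\bar K$ gives $\text{H}^i(X_{\bar K},\scrO_{X_{\bar K}})=0$. Assembling the three parts proves that $X_{\bar K}$ is a Calabi--Yau manifold over $\bar K$. The only step carrying real content is the triviality of $\omega_{\pi_R}$: the obstruction to propagating the trivialization through the thickenings lives in $\text{H}^2(X_0,\scrO_{X_0})$, which is exactly where the hypothesis $\dim X_0>2$ is needed, and the passage from a formal trivialization to an algebraic one is Grothendieck's existence theorem, here in the form of the equivalence \pref{eq:Alg}; everything else is semicontinuity and flat base change.
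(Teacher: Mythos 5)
Your argument is correct and reaches the conclusion by a genuinely different route at two points, while resting on the same cohomological inputs. For connectedness the paper cites \cite[Proposition IV.15.5.7]{GD66}, whereas you run upper semicontinuity on $r\mapsto\dim_{\kappa(r)}\text{H}^0(X_r,\scrO_{X_r})$ followed by flat base change; both work, and yours is perhaps the more self-contained. Your treatment of the intermediate cohomology (direct semicontinuity on $\dim\text{H}^i(X_r,\scrO_{X_r})$ for $0<i<\dim X_0$) is also simpler and more transparent than the paper's passage through $\wedge^{\dim X_0-1}\scrT_{\pi_R}$. The substantive divergence is in the triviality of $\omega_{\pi_R}$: the paper invokes Lieblich's deformation theory of perfect complexes \cite[Theorem 3.1.1]{Lie}, observing that $\{\omega_{X_n/R_n}\}$ and $\{\scrO_{X_n}\}$ are both order-by-order deformations of $\omega_{X_0}\cong\scrO_{X_0}$ whose freedom at each step is controlled by $\Ext^1_{X_0}(\omega_{X_0},\omega_{X_0})\cong\text{H}^1(X_0,\scrO_{X_0})=0$, so the two chains agree and \pref{eq:Alg} algebraizes; you instead run the square-zero exponential sequence on the thickenings, conclude that $\Pic(X_{n+1})\to\Pic(X_n)$ is an isomorphism once $\text{H}^1(X_0,\scrO_{X_0})=\text{H}^2(X_0,\scrO_{X_0})=0$, and deduce that $\Pic(X_R)\cong\Pic(\cX)$ injects into $\Pic(X_0)$. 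These are two formulations of the same obstruction-theoretic fact, governed by the same vanishing; yours is the classical Picard-group version, while the paper's piggybacks on machinery from \cite{Mor} already in use elsewhere. One point you should make explicit is the identification $\Pic(\cX)\cong\varprojlim_n\Pic(X_n)$, which requires the Mittag--Leffler vanishing of ${\varprojlim}^1\text{H}^0(X_n,\scrO^\times_{X_n})$; this holds because those transition maps are surjective once $\text{H}^1(X_0,\scrO_{X_0})=0$, so the gap is cosmetic, but it deserves a sentence.
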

\begin{proof}
Smoothness and projectivity
follow from their being stable under base change.
One can apply
\cite[Proposition IV15.5.7]{GD66}
to see that
$X_{\bar{K}}$
is connected.
Then
$X_{\bar{K}}$
must be irreducible,
as it is regular.
By
\cite[Theorem III12.8]{Har} 
the function
$h^0 \colon \Spec R  \to \bZ$
defined as
\begin{align*}
h^0(r)
=
\dim_\bfk \text{H}^0 (X_r, \wedge^{\dim X_0 -1} \scrT_{\pi_R} \otimes_R k(r))
\end{align*}
for
$r \in \Spec R$
is upper semicontinuous,
where
$\scrT_{\pi_R}$
is the relative tangent sheaf.
It follows that
there is an open neighborhood
$U$
of the closed point
to which the restriction vanishes.
Since
$R$
is a domain,
$U$ contains the generic point.
By flat base change we obtain
\begin{align*}
\text{H}^0 (X_{\bar{K}}, \wedge^{\dim X_0 -1} \scrT_{X_{\bar{K}}})
\cong
\text{H}^0 (X_K, \wedge^{\dim X_0 -1} \scrT_{X_K}) \otimes_K \bar{K} 
=
0.
\end{align*}
Similarly,
one can show the vanishing of all the other relevant cohomology.

It remains to show the triviality of the canonical bundle.
Consider the formal completion
$\hat{\omega}_{X_R / R}$
of the relative canonical sheaf
on
$X_R$
along the closed fiber
$X_0$.
It is given by the limit of inverse system
$\{ \omega_{X_{R_n} / R_n} \}_{n \in \bN}$
with
$R_n = R / \frakm^{n+1}_R$.
Here,
the inverse system consists of the sequence of deformations of
$\omega_{X_0}$
along order by order square zero extensions. 
Since we have
$\omega_{X_0} \cong \scrO_{X_0}$,
the inverse system
$\{ \scrO_{X_{R_n}} \}_{n \in \bN}$
also consists of the sequence of deformations of
$\omega_{X_0}$.
On the other hand,
by
\cite[Theorem 3.1.1(2)]{Lie}
the set of deformations of   
$\omega_{X_0}$
as a perfect complex to
$X_1$
is a pseudo-torsor under
$\Ext^1_{X_0}(\omega_{X_0}, \omega_{X_0})^{\oplus l_1}$,
where
$l_1$
is the dimension of $\bfk$-vector space
$\frakm_R / \frakm^2_R$.
This is trivial by the assumption on
$X_0$
and
there is an isomorphism
$\omega_{X_1 / R_1} \cong \scrO_{X_{R_1}}$
respecting 
$\omega_{X_1 / R_1} \otimes_{R_1} \bfk
\cong
\omega_{X_0}$
and
$\scrO_{X_{R_1}} \otimes_{R_1} \bfk
\cong
\scrO_{X_0}
\cong
\omega_{X_0}$.
Inductively,
one finds isomorphisms
$\omega_{X_n / R_n} \cong \scrO_{X_{R_n}}$
respecting
$\omega_{X_n / R_n} \otimes_{R_n} R_{n-1}
\cong
\omega_{X_{n-1}}$
and
$\scrO_{X_{R_n}} \otimes_{R_n} R_{n-1}
\cong
\scrO_{X_{n-1}}
\cong
\omega_{X_{n-1}}$.
By universality of limit,
we obtain
$\hat{\omega}_{X_R / R} \cong \hat{\scrO}_{X_R}$,
which in turn induces
$\omega_{X_R / R} \cong \scrO_{X_R}$
via the equivalence
\pref{eq:Alg}.
\end{proof}

Next,
we consider the versal deformation
$X_S$
of
$X_0$
and
its geometric generic fiber.
We have the pullback diagram
\begin{align*}
\begin{gathered}
\xymatrix{
X_{\bar{Q}} \ar[r]^-{\bar{u}} \ar_{\pi_{\bar{Q}}}[d] & X_{\bar{Q}} \ar[r]^-{u} \ar_{\pi_Q}[d] & X_S \ar^{\pi_S}[d] \\
\Spec \bar{Q} \ar[r]_-{\bar{v}} & \Spec Q \ar[r]_-{v} & \Spec S.
}
\end{gathered}
\end{align*}

\begin{lem}
The geometric generic fiber
$X_{\bar{Q}}$
is a Calabi--Yau manifold over
$\bar{Q}$.
\end{lem}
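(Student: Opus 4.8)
The plan is to follow the proof of the previous lemma almost verbatim, isolating the one step that genuinely differs. Smoothness and projectivity of $X_{\bar{Q}}$ over $\bar{Q}$ are inherited from $\pi_S\colon X_S\to\Spec S$ under base change. Connectedness of $X_{\bar{Q}}$ follows from \cite[Proposition IV15.5.7]{GD66} together with the connectedness of the closed fiber $X_0$; being smooth over a field, $X_{\bar{Q}}$ is regular, so connectedness forces irreducibility. For the vanishing $\text{H}^i(X_{\bar{Q}},\scrO_{X_{\bar{Q}}})=0$, $0<i<\dim X_0$, I would use that $s\mapsto\dim_{k(s)}\text{H}^i(X_s,\scrO_{X_s})$ is upper semicontinuous on $\Spec S$ by \cite[Theorem III12.8]{Har}: it vanishes at $s_0$, hence on an open neighborhood of $s_0$, which contains the generic point since $\Spec S$ is irreducible; flat base change along $\Spec\bar{Q}\to\Spec Q$ then gives the claim. (Equivalently one can argue with $\text{H}^0$ of the exterior powers of the relative tangent sheaf, as in the previous lemma.)

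The only substantial point is the triviality of the canonical bundle, and here the non-completeness of $S$ prevents the direct appeal to \pref{eq:Alg} used in the previous lemma. To get around this I would pass to the completion $\widehat{S}_{s_0}$. Since $\Spec S$ is \'etale over $\Spec T$ near $s_0$, the ring $\widehat{S}_{s_0}$ is isomorphic to the formal power series ring in $\dim_{\bfk}\text{H}^1(X_0,\scrT_{X_0})$ variables, i.e.\ to $R$, and the base change $X_S\times_S\Spec\widehat{S}_{s_0}$, whose formal completion along $X_0$ is again $\cX$, is canonically identified with the effectivization $X_R$. The proof of the previous lemma gives $\omega_{X_R/R}\cong\scrO_{X_R}$; since $\pi_S$ is smooth, $\omega_{X_S/S}$ commutes with base change, so this says exactly that $\omega_{X_S/S}$ becomes trivial after pullback to $X_S\times_S\Spec\widehat{S}_{s_0}$.

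It then remains to transfer this triviality back to a Zariski neighborhood of $s_0$. By faithfully flat descent along the local homomorphism $S_{\frakm_{s_0}}\to\widehat{S}_{s_0}$ (concretely, by observing that $\pi_{S*}(\omega_{X_S/S})$ and $\pi_{S*}(\omega_{X_S/S}^{\vee})$ become free of rank one after base change to $\widehat{S}_{s_0}$, hence are already free of rank one over $S_{\frakm_{s_0}}$, with a generating section of each whose product is a unit), the line bundle $\omega_{X_S/S}$ is trivial over $X_S\times_S\Spec S_{\frakm_{s_0}}$. As $X_S\to\Spec S$ is of finite presentation and $\Spec S_{\frakm_{s_0}}$ is the filtered intersection of the affine open neighborhoods of $s_0$, a standard limit argument yields an open $U\ni s_0$ with $\omega_{X_S/S}|_{X_U}\cong\scrO_{X_U}$; since $\Spec S$ is irreducible, $U$ contains the generic point, so $\omega_{X_Q/Q}\cong\scrO_{X_Q}$, and pulling back along $\Spec\bar{Q}\to\Spec Q$ (using \cite[Proposition II8.10]{Har}) gives $\omega_{X_{\bar{Q}}/\bar{Q}}\cong\scrO_{X_{\bar{Q}}}$. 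I expect the main obstacle to be precisely this last transfer: everything up to the complete picture over $\widehat{S}_{s_0}$ is routine, but one must do a little work to descend triviality of a line bundle from $\widehat{S}_{s_0}$ to an open subscheme of $\Spec S$, because \pref{eq:Alg} is unavailable over the non-complete ring $S$.
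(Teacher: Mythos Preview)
Your proof is correct but takes a genuinely different route from the paper for the only nontrivial point, the triviality of the canonical bundle. The paper does not pass through the completion $\widehat{S}_{s_0}$ at all: instead it exploits the explicit construction of $X_S$ via Artin approximation. Recall that $R$ is the filtered colimit of finitely generated $T$-subalgebras $R_i$, and $X_S$ is obtained from some $X_{R_i}$ via the approximation $\varphi \colon R_i \to S$. The paper observes that both $\omega_{X_{R_i}/R_i}$ and $\scrO_{X_{R_i}}$ are deformations of $\omega_{X_0}$; since these become isomorphic after base change to the colimit $R$ (by the previous lemma), \cite[Proposition 2.2.1]{Lie} on local finite presentation of the deformation functor yields $\omega_{X_{R_i}/R_i} \cong \scrO_{X_{R_i}}$ for $i$ large enough, and pulling back along $\varphi$ gives $\omega_{X_S/S} \cong \scrO_{X_S}$.

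Your approach trades the citation of Lieblich and the auxiliary rings $R_i$ for a standard descent package: faithfully flat descent along $S_{\frakm_{s_0}} \to \widehat{S}_{s_0}$ followed by spreading out. This is more self-contained and would apply to any smooth proper family whose base change to $\widehat{S}_{s_0}$ has trivial relative canonical bundle, independently of how the family was built; the paper's argument is shorter but is tied to the specific Artin approximation setup. The paper also records, in a subsequent remark, a third argument valid when $\bfk = \bC$, computing $\omega_{X_S/S}$ directly from the triviality of $\omega_{X_s}$ for every closed fiber.
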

\begin{proof}
Nontrivial part is the triviality of the canonical bundle.
Consider the collection
$\{ \omega_{X_{R_i} / R_i} \}_{i \in I}$
of relative canonical sheaves
on
$X_{R_i}$.
It consists of the sequence of deformations of
$\omega_{X_0}$.
Since we have
$\omega_{X_0} \cong \scrO_{X_0}$,
the collection
$\{ \scrO_{X_{R_n}} \}_{n \in \bN}$
of structure sheaves also consists of the sequence of deformations of
$\omega_{X_0}$.
We have
$\omega_{X_R / R}
\cong
\omega_{X_{R_i} / R_i} \otimes_{R_i} R$
by
\cite[Proposition II8.10]{Har}
and
the construction of
$X_S$.
One can apply
\cite[Proposition 2.2.1]{Lie}
to find an isomorphism
$\omega_{X_{R_i} / R_i} \cong \scrO_{X_{R_i}}$
for sufficiently large
$i$
with respect to the partial order of
$I$.  
We obtain
$\omega_{X_S / S} \cong \scrO_{X_S}$,
again by
\cite[Proposition II8.10]{Har}
and
the construction of
$X_S$.
\end{proof}

\begin{rmk}
Assuming
$\bfk = \bC$,
one can show the previous lemma
without using the deformation theory of perfect complexes as follows. 
The fact that
deformations of Calabi--Yau manifolds are Calabi--Yau is well-known.
Let
$X_\bfk$
be a very general fiber of
$X_S$.
Choose a countable subfield
$L \subset \bfk$
over which
$\pi_S$
is defined,
i.e.,
we are given a morphism of $L$-varieties
$\pi_L \colon X_L \to \Spec S_L$
such that
the base change
$\pi_L \times_L \bfk$
is isomorphic to
$\pi_S$.
From the proof of
\cite[Lemma 2.1]{Via}
there exists an isomorphism
$\bfk \to \bar{Q}$
fixing
$L$
such that
the base change
$X_\bfk \times_\bfk {\bar{Q}}$
is isomorphic to
$X_{\bar{Q}}$.
Note that
$X_\bfk$
and
$X_{\bar{Q}}$
are isomorphic
as a scheme
but not
as a variety,
since the induced isomorphism of the schemes does not lie over a fixed base.
\end{rmk}

\subsection{The derived equivalence}
Suppose that
$X_0$
is derived-equivalent to another Calabi--Yau manifold
$X^\prime_0$.
Recall that there are effectivizations
$X_R, X^\prime_R$
over the same regular affine $\bfk$-scheme
$\Spec R$.
We have the pullback diagram 
\begin{align*}
\begin{gathered}
\xymatrix{
X_{\bar{K}} \times X^\prime_{\bar{K}} \ar[r]^-{\bar{i}^{\prime \prime}} \ar_{\pi_{\bar{K}} \times \pi^\prime_{\bar{K}}}[d] & X_K \times X^\prime_K \ar[r]^-{i^{\prime \prime}} \ar_{\pi_K \times \pi^\prime_K}[d] & X_R \times_R X^\prime_R \ar^{\pi_R \times_R \pi^\prime_R}[d] \\
\Spec \bar{K} \ar[r]_-{} & \Spec K \ar[r]_-{} & \Spec R.
}
\end{gathered}
\end{align*}
Let
$\cE_0$
be a Fourier--Mukai kernel. 
By
\cite[Proposition 3.3, Corollary 4.2]{Mor}
one can deform
$\cE_0$
to a Fourier--Mukai kernel
$\cE$
on
$X_R \times_R X^\prime_R$. 
Applying
Proposition
\pref{prop:iDeq2}
and
Corollary
\pref{cor:iDeq3},
we obtain the derived equivalence of the geometric generic fibers
\begin{align*}
\Phi_{(i^{\prime \prime} \circ {\bar{i}}^{\prime \prime})^* \cE}
\colon
D^b(X_{\bar{K}})
\to
D^b(X^\prime_{\bar{K}}).
\end{align*}

Recall that
there are smooth projective versal deformations
$X_S, X^\prime_S$
of
$X_0, X^\prime_0$
over the same nonsingular affine $\bfk$-variety
$\Spec S$.
We have the pullback diagram 
\begin{align*}
\begin{gathered}
\xymatrix{
X_{\bar{Q}} \times X^\prime_{\bar{Q}} \ar[r]^-{\bar{j}^{\prime \prime}} \ar_{\pi_{\bar{Q}} \times \pi^\prime_{\bar{Q}}}[d] & X_Q \times X^\prime_Q \ar[r]^-{j^{\prime \prime}} \ar_{\pi_Q \times \pi^\prime_Q}[d] & X_S \times_S X^\prime_S \ar^{\pi_S \times_S \pi^\prime_S}[d] \\
\Spec \bar{Q} \ar[r]_-{} & \Spec Q \ar[r]_-{} & \Spec S.
}
\end{gathered}
\end{align*}
By
\cite[Proposition 3.3]{Mor}
one can deform
$\cE_0$
to a perfect complex
$\cE_S$
on
$X_S \times_S X^\prime_S$.
After possible shrinking of
$\Spec S$,
the relative integral functor
$\Phi_{\cE_S}$
is an equivalence
\cite[Theorem 1.1]{Mor}.
Applying
Proposition
\pref{prop:iDeq2}
and
Corollalry
\pref{cor:iDeq3},
we obtain the derived equivalence of the geometric generic fibers
\begin{align*}
\Phi_{(j^{\prime \prime} \circ {\bar{j}}^{\prime \prime})^* \cE}
\colon
D^b(X_{\bar{Q}})
\to
D^b(X^\prime_{\bar{Q}}).
\end{align*}

\subsection{Nonbirationality}
Let
$X$
be a smooth proper variety over an algebraically closed field.
Recall that
the Néron--Severi group
$\NS X$
is the quotient of the Picard group
$\Pic X$
by the subgroup
$\Pic^0 X$
of isomorphism classes of line bundles
which are algebraically equivalent to
$0$.
The group
$\NS X$
is a finitely generated with rank
$\rho (X)$
called the Picard number.
We denote by
$\NS_{tor} X$
the subgroup of torsion elements,
which is known to be a birational invariant.

\begin{lem}
If
$\NS_{tor} X_0, \NS_{tor} X^\prime_0$
are nonisomorphic,
then
$X_{\bar{K}}, X^\prime_{\bar{K}}$
are nonbirational.
\end{lem}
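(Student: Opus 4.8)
The plan is to reduce the statement to a comparison of torsion Néron--Severi groups of the two Calabi--Yau fibers, and then to prove that comparison by an étale base--change argument. Since $\NS_{tor}$ is a birational invariant, it suffices to exhibit group isomorphisms $\NS_{tor} X_{\bar K} \cong \NS_{tor} X_0$ and, by the identical argument applied to $X^\prime_R$, $\NS_{tor} X^\prime_{\bar K} \cong \NS_{tor} X^\prime_0$; granting these, $\NS_{tor} X_0 \not\cong \NS_{tor} X^\prime_0$ forces $\NS_{tor} X_{\bar K} \not\cong \NS_{tor} X^\prime_{\bar K}$, so $X_{\bar K}$ and $X^\prime_{\bar K}$ cannot be birational. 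Here $X_0$ is Calabi--Yau by hypothesis and $X_{\bar K}$ is Calabi--Yau by the lemma above, so $\text{H}^1(-,\scrO) = 0$ on each; hence $\Pic^0$ vanishes on both, and therefore $\NS = \Pic$ and $\NS_{tor} = \Pic(-)_{tor}$, a finite abelian group, on each.

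To compare, I would first record the Kummer identification: for a smooth proper variety $Y$ over an algebraically closed field and any $n \geq 1$ (automatically invertible, as $\bfk$ has characteristic zero), the Kummer sequence $1 \to \mu_n \to \mathbb{G}_m \xrightarrow{n} \mathbb{G}_m \to 1$ together with divisibility of $\Gamma(Y,\scrO_Y)^\ast$ gives $\text{H}^1_{\mathrm{et}}(Y,\mu_n) \cong \Pic(Y)[n]$. Applied to $Y = X_0$ and to $Y = X_{\bar K}$, this reduces the problem to comparing $\text{H}^1_{\mathrm{et}}(X_0,\mu_n)$ with $\text{H}^1_{\mathrm{et}}(X_{\bar K},\mu_n)$. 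Now $\pi_R \colon X_R \to \Spec R$ is smooth and proper (indeed projective), so by the smooth and proper base change theorems the sheaf $R^1(\pi_R)_\ast \mu_n$ is locally constant constructible on $\Spec R$ and its formation commutes with base change on $\Spec R$. Since $R = \bfk\llbracket t_1,\dots,t_d\rrbracket$ is a complete noetherian local ring with algebraically closed residue field $\bfk$, it is strictly henselian, whence $\pi_1^{\mathrm{et}}(\Spec R) = 1$ and every locally constant étale sheaf on $\Spec R$ is constant. Thus $R^1(\pi_R)_\ast\mu_n$ is the constant sheaf with value its closed--point stalk $\text{H}^1_{\mathrm{et}}(X_0,\mu_n)$, and pulling back along $\Spec \bar K \to \Spec R$ yields $\text{H}^1_{\mathrm{et}}(X_{\bar K},\mu_n) \cong \text{H}^1_{\mathrm{et}}(X_0,\mu_n)$. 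Choosing $n$ divisible by the exponents of the finite groups $\NS_{tor} X_0$ and $\NS_{tor} X_{\bar K}$ gives $\NS_{tor} X_0 = \Pic(X_0)[n] \cong \text{H}^1_{\mathrm{et}}(X_0,\mu_n) \cong \text{H}^1_{\mathrm{et}}(X_{\bar K},\mu_n) \cong \Pic(X_{\bar K})[n] = \NS_{tor} X_{\bar K}$, completing the reduction.

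The main obstacle is the bookkeeping in the middle step: one must identify $R$ as strictly henselian so that $\Spec R$ is simply connected in the étale topology, and one must invoke the base change theorem in the precise form guaranteeing both local constancy of $R^1(\pi_R)_\ast \mu_n$ and compatibility of its formation with passage to the geometric generic fiber; the Calabi--Yau hypothesis enters only through $\Pic^0 = 0$, which turns étale $\text{H}^1$ into $\NS_{tor}$. A variant that sidesteps some of this cohomology is to observe that $\Pic^\tau_{X_R/R}$ is a finite étale group scheme over the strictly henselian base $\Spec R$ --- using that $\pi_R$ is smooth proper with geometrically integral fibers all satisfying $\text{H}^1(\scrO) = 0$, so that $\Pic^0_{X_R/R}$ is trivial, together with properness of $\Pic^\tau$ over the base --- hence constant, so that its closed and geometric generic fibers $\NS_{tor} X_0$ and $\NS_{tor} X_{\bar K}$ are canonically identified.
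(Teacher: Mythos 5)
Your proof is correct, but it takes a genuinely different route from the paper's. The paper simply invokes Maulik--Poonen \cite[Proposition 3.6]{MP}, which furnishes a specialization map $\mathrm{sp}_{\bar K,\bfk}\colon \NS X_{\bar K}\to \NS X_0$ that is injective with torsion-free cokernel, and hence bijective on torsion subgroups; the nonisomorphism of torsion subgroups then immediately obstructs birationality. You instead re-derive the torsion comparison from first principles: $\Pic^0=0$ because $H^1(\scrO)=0$ on both $X_0$ and $X_{\bar K}$, the Kummer sequence turns $\Pic[n]$ into $H^1_{\mathrm{et}}(-,\mu_n)$, smooth proper base change makes $R^1(\pi_R)_*\mu_n$ lisse with stalks compatible with base change, and strict henselianness of $R=\bfk\llbracket t_1,\dots,t_d\rrbracket$ (so $\pi_1^{\mathrm{et}}(\Spec R)=1$) forces the lisse sheaf to be constant, identifying the closed-fiber and geometric-generic-fiber stalks. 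Both arguments are sound; yours is more self-contained and actually re-proves (in this special case) the content of the Maulik--Poonen result you would otherwise cite, while the paper's is shorter and, importantly, applies uniformly to the companion lemma over a non-local base $\Spec S$ where your strict-henselianness step does not directly transfer --- one would instead need to specialize along a curve or a DVR inside $S$, which is essentially what Maulik--Poonen package for you.
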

\begin{proof}
By
\cite[Proposition 3.6]{MP}
there is an injection
\begin{align*}
\rm{sp}_{\bar{K}, \bfk} \colon \NS X_{\bar{K}} \to \NS X_0 
\end{align*}
whose cokernel is torsion free.
In particular,
$\rm{sp}_{\bar{K}, \bfk}$
is bijective on torsion subgroups.
Then
$\NS_{tor} X_{\bar{K}}, \NS_{tor} X^\prime_{\bar{K}}$
cannot be isomorphic.
\end{proof}

The same argument yields

\begin{lem}
If
$\NS_{tor} X_0, \NS_{tor} X^\prime_0$
are nonisomorphic,
then
$X_{\bar{Q}}, X^\prime_{\bar{Q}}$
are nonbirational.
\end{lem}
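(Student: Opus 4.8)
The plan is to repeat the proof of the preceding lemma essentially verbatim, with the versal deformation $X_S \to \Spec S$ in place of the effectivization $X_R \to \Spec R$. Recall that $\Spec S$ is a nonsingular affine $\bfk$-variety carrying a distinguished closed point $s_0$ with fiber $X_{s_0} \cong X_0$; since $\bfk$ is algebraically closed this point is $\bfk$-rational, and $\bar{Q}$ is the algebraic closure of the function field $Q = K(S)$. First I would apply \cite[Proposition 3.6]{MP} to the smooth proper morphism $\pi_S \colon X_S \to \Spec S$ and to the specialization of the geometric generic point to $s_0$, obtaining an injective specialization homomorphism
\begin{align*}
\rm{sp}_{\bar{Q}, \bfk} \colon \NS X_{\bar{Q}} \to \NS X_0
\end{align*}
with torsion-free cokernel. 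Consequently $\rm{sp}_{\bar{Q}, \bfk}$ restricts to an isomorphism on torsion subgroups, so $\NS_{tor} X_{\bar{Q}} \cong \NS_{tor} X_0$; applying the same to $\pi^\prime_S \colon X^\prime_S \to \Spec S$ gives $\NS_{tor} X^\prime_{\bar{Q}} \cong \NS_{tor} X^\prime_0$.

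Next I would combine these isomorphisms with the hypothesis that $\NS_{tor} X_0$ and $\NS_{tor} X^\prime_0$ are nonisomorphic to deduce that $\NS_{tor} X_{\bar{Q}}$ and $\NS_{tor} X^\prime_{\bar{Q}}$ are nonisomorphic. Since $X_{\bar{Q}}$ and $X^\prime_{\bar{Q}}$ are smooth proper over $\bar{Q}$ and $\NS_{tor}$ is a birational invariant, the two varieties cannot be birational over $\bar{Q}$, which is the assertion.

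The only step requiring attention is checking that the hypotheses of \cite[Proposition 3.6]{MP} are met in our setting, namely that $\Spec S$ being a nonsingular affine $\bfk$-variety --- hence integral, regular, and of finite type over the algebraically closed field $\bfk$ --- together with $s_0$ being $\bfk$-rational, suffices for the specialization map of Néron--Severi groups to exist and to have torsion-free cokernel. This is the same verification as in the $\bar{K}$ case; if desired one may further reduce to the discrete valuation ring situation by restricting the family along a smooth affine curve in $\Spec S$ through $s_0$ and the generic point and passing to its normalization. I do not expect any genuine obstacle here: the argument is a routine transcription of the preceding lemma with $(X_R, K)$ replaced by $(X_S, Q)$.
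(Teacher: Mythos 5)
Your proposal is correct and coincides with the paper's proof: the paper disposes of this lemma with the single line ``the same argument yields,'' meaning precisely the verbatim transcription you give, replacing $(X_R, K, \bar{K})$ by $(X_S, Q, \bar{Q})$ and invoking \cite[Proposition 3.6]{MP} together with the birational invariance of $\NS_{tor}$. Your additional remark that the hypotheses of the cited proposition are met because $\Spec S$ is a nonsingular affine $\bfk$-variety with $\bfk$-rational distinguished point is a sensible sanity check, though the paper leaves it implicit.
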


If
$\rho (X_0) = \rho (X^\prime_0) = 1$,
then
$X_0, X^\prime_0$
are birational
if and only if
they are isomorphic
\cite[Section 0.5]{BC}.
Indeed,
birational Calabi--Yau manifolds are connected by a sequence of flops
\cite{Kaw}.
However,
no such flops are possible on neither
$X_0$
nor
$X^\prime_0$,
since by
$\rho (X_0) = \rho (X^\prime_0) = 1$
all their nonzero nef divisors are ample.

\begin{lem}
Assume that
$\rho (X_0) = \rho (X^\prime_0) = 1$
and
$\deg (X_0) \neq \deg (X^\prime_0)$.
Then
$X_{\bar{K}}, X^\prime_{\bar{K}}$
are nonbirational.
Here,
the degree is defined with respect to the unique ample generator of the Picard group.
\end{lem}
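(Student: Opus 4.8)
The plan is to transfer the $\rho=1$ rigidity of Calabi--Yau manifolds recalled just before this lemma from $X_0$, $X^\prime_0$ to the geometric generic fibers, after first checking that both the Picard number and the degree are preserved under specialization.

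To begin, I would record that $X_{\bar K}$ and $X^\prime_{\bar K}$ are Calabi--Yau manifolds over $\bar K$ (proved earlier in this section) and that they have Picard number one. For the latter I would reuse the injective specialization homomorphism $\mathrm{sp}_{\bar K,\bfk}\colon \NS X_{\bar K}\to \NS X_0$ with torsion-free cokernel from \cite[Proposition 3.6]{MP}, exactly as in the preceding lemma: since $\NS X_0$ has rank one and an ample class is never torsion, $\NS X_{\bar K}$ has rank exactly one, whence torsion-freeness of the cokernel forces $\mathrm{sp}_{\bar K,\bfk}$ to be an isomorphism. As specialization carries effective classes to effective classes, it sends the primitive ample generator $H_{\bar K}$ of $\NS X_{\bar K}$ to that of $\NS X_0$, which we denote $H_0$ (both taken modulo torsion).

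Next I would prove $\deg X_{\bar K}=\deg X_0$. The effectivization $X_R$ is flat and projective over $\Spec R$, so I fix a relatively very ample $\scrO_{X_R}(1)$ and let $L_0$, $L_{\bar K}$ be its restrictions to $X_0$ and $X_{\bar K}$, both ample. The Hilbert polynomial of the fibers of the flat family $X_R\to\Spec R$ with respect to $\scrO_{X_R}(1)$ is constant on the connected base $\Spec R$ and is unchanged by the flat base change $K\hookrightarrow\bar K$; comparing leading coefficients gives $L_{\bar K}^{\dim X_0}=L_0^{\dim X_0}$. Writing $L_0=a_0H_0$ and $L_{\bar K}=aH_{\bar K}$ modulo torsion with $a_0,a\geq 1$, and using that $\mathrm{sp}_{\bar K,\bfk}$ sends $L_{\bar K}$ to $L_0$ (it is the restriction of a line bundle defined over the whole family) as well as $H_{\bar K}$ to $H_0$, one gets $a=a_0$; hence $\deg X_{\bar K}=L_{\bar K}^{\dim X_0}/a^{\dim X_0}=L_0^{\dim X_0}/a_0^{\dim X_0}=\deg X_0$. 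The identical computation gives $\deg X^\prime_{\bar K}=\deg X^\prime_0$, so $\deg X_{\bar K}\neq\deg X^\prime_{\bar K}$ by hypothesis.

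Finally I would argue by contradiction: if $X_{\bar K}$ and $X^\prime_{\bar K}$ were birational, then, being Calabi--Yau manifolds over the algebraically closed field $\bar K$ of characteristic zero with Picard number one, the argument recalled before the lemma applies verbatim---birational Calabi--Yau manifolds are connected by a sequence of flops by \cite{Kaw} (valid over $\bar K$ by the Lefschetz principle), while $\rho=1$ leaves no room for a flop, cf.\ \cite[Section 0.5]{BC}---and forces $X_{\bar K}\cong X^\prime_{\bar K}$ as $\bar K$-varieties. Since an isomorphism preserves the ample generator of the Picard group and its top self-intersection, this gives $\deg X_{\bar K}=\deg X^\prime_{\bar K}$, contradicting the previous paragraph. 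I expect the middle step to be the delicate one: the relatively ample sheaf need not restrict to the primitive ample generator on the fibers, so one genuinely has to route through the specialization isomorphism to identify the multipliers $a$ and $a_0$ before comparing self-intersections; transferring \cite{Kaw} to $\bar K$ and the remaining verifications are routine.
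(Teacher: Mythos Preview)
Your proof is correct and follows the same three-step approach as the paper: establish $\rho(X_{\bar K})=1$ via the specialization map of \cite{MP}, show $\deg X_{\bar K}=\deg X_0$ via constancy of Hilbert polynomials in the flat projective family and invariance under the base change $K\hookrightarrow\bar K$, and conclude from the $\rho=1$ rigidity recalled before the lemma. Your treatment of the degree step is in fact more careful than the paper's---the paper tacitly identifies the degree with respect to the primitive ample generator with the leading Hilbert coefficient for a chosen relative polarization, whereas you correctly flag the possible multiplier discrepancy and resolve it by showing that the specialization isomorphism matches both $L_{\bar K}\mapsto L_0$ and $H_{\bar K}\mapsto H_0$, hence $a=a_0$.
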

\begin{proof}
By
\cite[Proposition 3.6]{MP}
we have
$\rho (X_{\bar{K}}) \leq \rho (X^\prime_0) = 1$.
There is an ample divisor
$H$
on
$X_{\bar{K}}$,
as it is a projective variety of positive dimension.
It follows that
$H$
is neither torsion
nor
algebraically equivalent to
$0$.
Indeed,
torsion divisors are numerically trivial
and
one of the two numerically effective divisors is ample
if and only if
so is the other.
Two algebraically equivalent divisors share the degree.
Hence we obtain
$\rho (X_{\bar{K}}) = 1$. 

Recall that
$\deg(X_0)$
is the highest order coefficient of the Hibert polynomial of
$X_0$
multiplied with
$(\dim X_0)!$.
Since
$\pi_R$
is flat projective,
we have
$\deg(X_0) = \deg(X_K)$.
Let
$S(X_K)$
be the homogeneous coordinate ring of
$X_K$
and
$P_{X_K}$
the Hilbert polynomial of
$X_K$.
By definition
$P_{X_K}(l)$
are given by
$\dim_K S(X_K)_l$
for sufficiently large integers
$l \in \bZ$.
Since
$X_{\bar{K}}$
is irreducible,
$\dim_K S(X_K)_l$
is stable under the base change
$X_{\bar{K}} \to X_K$
along algebraic extension
$K \subset \bar{K}$.
Thus we obtain
$\deg(X_K) = \deg(X_{\bar{K}})$
and
$\deg(X_{\bar{K}}) \neq \deg(X^\prime_{\bar{K}})$.
\end{proof}

\begin{lem}
Assume that
$\rho (X_0) = \rho (X^\prime_0) = 1$
and
$\deg (X_0) \neq \deg (X^\prime_0)$.
Then
$X_{\bar{Q}}, X^\prime_{\bar{Q}}$
are nonbirational.
Here,
the degree is defined with respect to the unique ample generator of the Picard group.
\end{lem}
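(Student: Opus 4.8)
The plan is to reproduce, almost verbatim, the proof of the preceding lemma for $X_{\bar{K}}, X^\prime_{\bar{K}}$, with the effectivization datum $(R, K, \bar{K}, \pi_R)$ replaced by the versal-deformation datum $(S, Q, \bar{Q}, \pi_S)$. Every input used there is available in the present situation: by construction $X_0$ is the fiber of $\pi_S$ over the distinguished closed point of $\Spec S$, the morphism $\pi_S$ is flat projective, and $\Spec S$ is an integral $\bfk$-variety.

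First I would show $\rho(X_{\bar{Q}}) = \rho(X^\prime_{\bar{Q}}) = 1$. By \cite[Proposition 3.6]{MP} the specialization map $\NS X_{\bar{Q}} \to \NS X_0$ is injective, so $\rho(X_{\bar{Q}}) \leq \rho(X_0) = 1$; the applicability of this result over $\Spec S$ is already granted, since it was used in the $\bar{Q}$-version of the $\NS_{tor}$ lemma above. Conversely, $X_{\bar{Q}}$ is a projective variety of positive dimension, so it carries an ample divisor $H$; as $H$ is neither torsion (torsion divisors are numerically trivial) nor algebraically equivalent to $0$ (algebraically equivalent divisors share their degree), its class in $\NS X_{\bar{Q}}$ is nonzero, giving $\rho(X_{\bar{Q}}) \geq 1$. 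The identical argument applies to $X^\prime_{\bar{Q}}$.

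Next I would identify the degrees. Since $\pi_S$ is flat projective with integral base, the Hilbert polynomial of the fibers is constant, which gives $\deg(X_0) = \deg(X_Q)$; here, exactly as in the $\bar{K}$ case, one uses $\rho = 1$ so that the chosen relative polarization restricts to a fixed positive multiple of the ample generator on each relevant fiber. Writing $S(X_Q)$ for the homogeneous coordinate ring of $X_Q$ and $P_{X_Q}$ for its Hilbert polynomial, one has $P_{X_Q}(l) = \dim_Q S(X_Q)_l$ for $l \gg 0$; since $X_{\bar{Q}}$ is irreducible, $\dim_Q S(X_Q)_l$ is unchanged under the base change along the algebraic extension $Q \subset \bar{Q}$, hence $\deg(X_Q) = \deg(X_{\bar{Q}})$. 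Running the same computation for $X^\prime$ and invoking the hypothesis $\deg(X_0) \neq \deg(X^\prime_0)$, we get $\deg(X_{\bar{Q}}) \neq \deg(X^\prime_{\bar{Q}})$.

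Finally, $X_{\bar{Q}}, X^\prime_{\bar{Q}}$ are Calabi--Yau manifolds (by the lemma established above) of Picard number $1$, so a birational map between them is forced to be an isomorphism: birational Calabi--Yau manifolds are linked by a sequence of flops \cite{Kaw}, and when $\rho = 1$ no flop is possible since every nonzero nef divisor is then ample \cite[Section 0.5]{BC}. An isomorphism preserves the degree with respect to the unique ample generator of the Picard group, contradicting the previous paragraph; therefore $X_{\bar{Q}}, X^\prime_{\bar{Q}}$ are nonbirational. The only step that deserves a second look is the constancy of the degree in the third paragraph — that the relative polarization meets $X_0$ and $X_Q$ in the same multiple of the ample generator — but this is dispatched exactly as in the $\bar{K}$ case and rests only on $\rho = 1$ together with flatness of $\pi_S$.
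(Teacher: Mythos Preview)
Your proposal is correct and follows exactly the approach the paper takes: its proof opens with ``The same argument as above works also here,'' i.e., the $\bar{K}$ argument you have transcribed. The only difference is that the paper then appends, as a supplementary remark under the assumption $\bfk = \bC$, an alternative computation of $\rho(X_{\bar{Q}}) = 1$ via GAGA, Ehresmann's lemma, and \cite[Theorem 1.1]{MP}; this is extra content rather than a different proof.
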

\begin{proof}
The same argument as above works also here.
Assuming that
$\bfk = \bC$,
we show
$\rho(X_{\bar{Q}}) = 1$
in another way.
By
\cite{Ser}
the morphism
$\pi_S \colon X_S \to \Spec S$
corresponds to a proper submersion of complex manifolds
$(\pi_S)_h \colon (X_S)_h \to (\Spec S)_h$. 
Ehresmann lemma tells us that
$(\pi_S)_h$
gives a locally trivial fibration of real manifolds  
\cite{Ehr}.
In particular,
all the fibers of
$(\pi_S)_h$
share the differential type
and 
$\text{H}^2 ((X_s)_h, \bZ)$
is independent from closed points
$s \in \Spec S$.
On the other hand,
we have
$\NS X_s \cong \Pic X_s \cong \text{H}^2 ((X_s)_h, \bZ)$,
as
$X_s$
are Calabi--Yau threefolds.
By
\cite[Theorem 1.1]{MP},
we obtain
$\rho(X_{\bar{Q}}) = \rho(X_0) = 1$.
\end{proof}

In summary,
we obtain

\begin{thm} \label{thm:FMP}
Let
$X_0, X_0^\prime$
be derived-equivalent Calabi--Yau manifolds
of dimension more than two.
Then the geometric generic fibers
$X_{\bar{K}}, X^\prime_{\bar{K}}$
of proper effectivizations
and that
$X_{\bar{Q}}, X^\prime_{\bar{Q}}$
of smooth projective versal deformations
are respectively derived-equivalent Calabi--Yau manifolds.
If,
in addition,
we have either
$\NS_{tor} X_0 \neq \NS_{tor} X^\prime_0$,
or
$\rho (X_0) = \rho (X^\prime_0) = 1$
and
$\deg (X_0) \neq \deg (X^\prime_0)$,
then
they are respectively nonbirational.
Similar results hold for the generic fibers.
\end{thm}

\begin{rmk}
Consider the base changes
$\pi_B \colon X_B \to B, \pi^\prime_B \colon X^\prime_B \to B$
of
$\pi_S, \pi^\prime_S$
to a smooth connected $\bfk$-curve
$B$
containing the point
$s_0$.
The derived equivalence of
$X_S, X^\prime_S$
induces that of
$X_B, X^\prime_B$.
If
$X_0, X^\prime_0$
are nonbirational,
then by
\cite[Theorem 1.1]{KT}
the generic fibers of
$\pi_B, \pi^\prime_B$
must be nonbirational.
Hence the function fields of
$X_B, X^\prime_B$
are nonisomorphic.
Since any field extension gives a faithfully flat module over the original field,
also the geometric generic fibers must be nonbirational. 
\end{rmk}

\subsection{Geometric generic Gross--Popescu pair}
In this subsection,
we temporarily assume that
$\bfk = \bC$.
Now,
we will consider the case of the Gross--Popescu pair
\begin{align*}
X_0 = V^1_{8, y}, \
X^\prime_0
=
V^1_{8, y} / G
\end{align*}
where
$V^1_{8, y}$
is one of the two small resolutions of 
$V_{8, y}$
and
$G = \bZ_8 \times \bZ_8$
freely acts on
$V^1_{8, y}$.
Here,
$V_{8, y}$
is a $3$-dimensional complete intersection in
$\bP^8$
of four hypersurfaces parametrized by a general point
$y \in \bP^2$.
They are derived-equivalent Calabi--Yau threefolds
with
$h^{1,1} (X_0) = h^{1,2} (X_0) = 2$
\cite{GP}.
Since
$X_0$
is simply-connected
\cite[Theorem 1.4]{GP08},
we have
$\text{H}^i (X_0, \scrO_{X_0}) = 0, \ i = 1, 2$.
Although the fundamental group of
$X^\prime_0$
is given by
$G \neq 0$,
we also have
$\text{H}^i (X^\prime_0, \scrO_{X^\prime_0}) = 0, \ i = 1, 2$
by
\cite[Corollary B]{PS}.
Then
$\NS_{tor} X_0 \neq \NS_{tor} X^\prime_0$,
or
in this case equivalently  
$\Pic_{tor} X_0 \neq \Pic_{tor} X^\prime_0$.
Indeed,
from the exponential sequence it follows
$\Pic X_0 = \text{H}^2 ((X_0)_h, \bZ)$.
The torsion part of
$\text{H}^2 ( ( X_0 )_h, \bZ )$
is
$\Tor_1 ( \text{H}_1 ( ( X_0 )_h, \bZ ), \bC )
=
0$
by the universal coefficient theorem
and
Van Kampen's theorem.
On the other hand,
the torsion part of
$\text{H}^2 ( ( X^\prime_0 )_h, \bZ )$
is
$\Tor_1 ( \text{H}_1 ( ( X^\prime_0 )_h, \bZ ), \bC )
=
G$.
Thus one can apply
\pref{thm:FMP}
to see that
$X_{\bar{K}}, X^\prime_{\bar{K}}$  
and
$X_{\bar{Q}}, X^\prime_{\bar{Q}}$  
are respectively
nonbirational derived-equivalent Calabi--Yau threefolds.  

\subsection{Geometric generic Pfaffian--Grassmannian pair}
Next,
we will consider the case of the Pfaffian-Grassmannian pair
\begin{align*}
X_0 = \Gr (2,V_7) \cap \bP (W), \
X^\prime_0
=
\Pf (4,V_7) \cap \bP (W^\perp)
\end{align*}
where
$V_7$
is a $7$-dimensional $\bfk$-vector space,
$W$
is a $14$-dimensional general quotient vector space of
$\wedge^2 V_7 \twoheadrightarrow W$,
and
$W^\perp = \Coker (W^\vee \hookrightarrow \wedge^2 V^\vee_7)$.
They are derived-equivalent Calabi--Yau threefolds
with
$\rho (X_0) = \rho (X^\prime_0) = 1$
and
$\deg (X_0) \neq \deg (X^\prime_0)$
\cite{BC, 0610957}.
One can apply
\pref{thm:FMP}
to see that
$X_{\bar{K}}, X^\prime_{\bar{K}}$  
and
$X_{\bar{Q}}, X^\prime_{\bar{Q}}$  
are respectively
nonbirational derived-equivalent Calabi--Yau threefolds.  
Similarly,
one obtains another example from
Reye congruence and double quintic symmetroid Calabi--Yau threefolds
\cite{HT}.

We will study
$X_{\bar{Q}}, X^\prime_{\bar{Q}}$
slightly further. 
Assume that
$\bfk$
is a universal domain.
Let
$L \subset \bfk$
be a countable subfield
over which
$X_S, X^\prime_S$
and
$\Spec S$
are defined,
i.e.,
we are given $L$-varieties
$X_L, X^\prime_L$
and
$\Spec S_L$
such that
$X_L \times_L \bfk \cong X_S, \ 
X^\prime_L \times_L \bfk \cong X^\prime_S$
and
$\Spec S_L \times_L \bfk \cong \Spec S$.
From the proof of
\cite[Lemma 2.1]{Via}
there exists an isomorphism
$\bfk \to \bar{Q}$
fixing
$L$
such that
the base change of very general fibers
$X_s, X^\prime_s$
of
$X_S, X^\prime_S$
are respectively isomorphic to
$X_{\bar{Q}}, X^\prime_{\bar{Q}}$
as a scheme.
We emphasize that
these isomorphisms of schemes do not fix the base field but
$L$,
different from isomorphisms of varieties.

There is another Fourier--Mukai partner called IMOU varieties
\cite{IMOU, Kuz18},
consisting of derived-equivalent Calabi--Yau threefolds
$Y_0, Y^\prime_0$
which are flat degenerations of 
$X_0, X^\prime_0$
respectively by
\cite[Remark 5.2]{IIM},
\cite[Corollary 6.3]{KK}.
See also
\cite[Introduction]{IMOU}.
Again,
each of
$Y_0, Y^\prime_0$ 
cannot be isomorphic to either of
$X_{\bar{Q}}, X^\prime_{\bar{Q}}$
as a variety,
since their base fields are different.
Thus 
$X_{\bar{Q}}, X^\prime_{\bar{Q}}$
provide a new example of Fourier--Mukai partners,
which
are isomorphic to very general fibers as a scheme
but
have different structures as a variety from both the
Pfaffian--Grassmannian pair
and
IMOU varieties.



\begin{thebibliography}{99999}
\bibitem[Art69]{Art69a}
M. Artin,
\emph{Algebraic approximation of structures over complete local rings},
Publ. Math-Paris. 36, 23-58 (1969).

\bibitem[BC09]{BC}
L. Borisov and A. C\u{a}ld\u{a}raru,
\emph{The Pfaffian--Grassmannian derived equivalence},
J. Algebraic. Geom. 18(2), 201-222 (2009).

\bibitem[BFN10]{BFN}
D. Ben-zvi, J. Francis, and D. Nadler,
\emph{Integral transforms and Drinfeld centers in derived algebraic geometry},
J. Amer. Math. Soc. 23(4), 909-966 (2010). 

\bibitem[BV03]{BV}
A. Bondal and M. Van den Bergh,
\emph{Generators and representability of functors in commutative and noncommutative geometry},
Mosc. Math. J. 3(1), 1-36 (2003).

\bibitem[CW10]{CW}
A. C\u{a}ld\u{a}raru and S. Willerton
\emph{The Mukai pairing, I: a categorical approach},
New York J.\ Math. 16, 61-98 (2010).

\bibitem[CP21]{CP}
J. Calabrese and R. Pirisi,
\emph{Gabriel’s theorem and birational geometry},
Proc. Amer. Math. Soc. 149(3), 907-922 (2021). 

\bibitem[Coh]{Coh}
L. Cohn,
\emph{Differential graded categories are k-linear stable infinity categories},
arXiv:1308.2587

\bibitem[Ehr52]{Ehr}
C. Ehresmann,
\emph{Les connexions infinitésimales dans un espace fibré differéntiable},
Séminaire Bourbaki : années 1948/49-1949/50-1950/51, exposés 1-49, Séminaire Bourbaki no. 1(1952).

\bibitem[Gab62]{Gab}
P. Gabriel,
\emph{Des catégories abéliennes},
Bull. Math. Soc. France. 90, 323-448 (1962).

\bibitem[GD61]{GD}
A. Grothendieck and J. Dieudonn\'e,
\emph{Elements de g\'eom\'etrie algebriqu\'e: III. \'Etude cohomologique des faisceaux coh\'erent, première partie},
Publ. Math-Paris. 11, 5-167 (1961).

\bibitem[GD66]{GD66}
A. Grothendieck and J. Dieudonn\'e,
\emph{Elements de g\'eom\'etrie algebriqu\'e: IV. \'Etude locale des sch\'emas et des morphismes de sch\'emas, Troisième partie},
Publ. Math-Paris. 28, 5-255 (1966).

\bibitem[GP01a]{GP}
M. Gross and S. Popescu,
\emph{Calabi--Yau threefolds and moduli of abelian surfaces I},
Compositio Math. 127(2), 169-228 (2001).

\bibitem[GP01b]{GP08}
M. Gross and S. Pavanelli,
\emph{A Calabi--Yau threefold with Brauer--Group $(\bZ / 8 \bZ)^2$},
Proc. Amer. Math. Soc. 136(1), 1-9 (2001).

\bibitem[Har77]{Har}
R Hartshorne,
\emph{Algebraic geometry},
Graduate Texts in Mathematics, 52, Springer-Verlag, (1977),
ISBN: 0-387-90244-9.

\bibitem[HLS09]{HLS}
D Hern\'andez Ruip\'erez, A.C. L\'opez Mart\'in, and D. S\'anchez G\'omez
\emph{Relative integral functors for singular fibrations and singular partners},
J. Eur. Math. Soc. 11, 597-625 (2009).

\bibitem[HMS09]{HMS09}
D. Huybrechts, E. Macr\`i, and P. Stellari, 
\emph{Derived equivalences of K3 surfaces and orientation},
Duke Math. 149(3), 461-507 (2009).

\bibitem[HMS11]{HMS11}
D. Huybrechts, E. Macr\`i, and P. Stellari,
\emph{Formal deformations and their categorical general fibre},
Comment. Math. Helv. 86(1), 41-71 (2011).

\bibitem[HT16]{HT}
S. Hosono and H. Takagi,
\emph{Double quintic symmetroids, Reye congruences, and their derived equivalence},
J. Differ. Geom. 104(3), 443–497, (2016).

\bibitem[Huy06]{Huy}
D. Huybrechts,
\emph{Fourier--Mukai transforms in algebraic geometry}, 
Oxford Science Publications, (2006)
ISBN: 978-0-19-929686-6.

\bibitem[IIM19]{IIM}
D. Inoue, A. Ito, and M. Miura,
\emph{Complete intersection Calabi--Yau manifolds with respect to homogeneous vector bundles on Grassmannians},
Math. Z. 292(1-2), 677-703 (2019). 

\bibitem[IMOU]{IMOU}
A. Ito, M. Miura, S. Okawa, and K. Ueda,
\emph{Calabi--Yau complete intersections in exceptional Grassmannians},
arXiv:1606.04076 

\bibitem[Kaw08]{Kaw}
Y. Kawamata,
\emph{Flops connect minimal models},
Publ. RIMS, Kyoto Univ. 44, 419-423 (2008).

\bibitem[KK16]{KK}
G. Kapustka and M. Kapustka,
\emph{Calabi--Yau threefolds in $\bP^6$},
Ann. Mat. Pur. Appl. 195(2), 529-556 (2016).  

\bibitem[KT19]{KT}
M. Kontsevich and Y. Tschinkel,
\emph{Specialization of birational types},
Invent. Math. 217, 415-432 (2019).

\bibitem[Kuz]{0610957}
A. Kuznetsov,
\emph{Homological projective duality for Grassmannians of lines}, 
arXiv:math/0610957

\bibitem[Kuz18]{Kuz18}
A. Kuznetsov,
\emph{Derived equivalence of {Ito--Miura--Okawa--Ueda} {Calabi--Yau} $3$-folds},
J. Math. Soc. Jap. 70(3), 1007-1013 (2018).

\bibitem[Lie06]{Lie}
M. Lieblich,
\emph{Moduli of complexes on a proper morphism},
J. Algebraic Geom. 15, 175-206 (2006).

\bibitem[Mar16]{Mar}
N. Martin,
\emph{The class of the affine line is a zero divisor in the Grothendieck ring: An improvement},
Comptes Rendus Math. 354(9), 936-939 (2016).

\bibitem[Miy91]{Miy}
J. Miyachi,
\emph{Localization of triangulated categories and derived categories},
J Algebra. 141(2), 463-483 (1991). 
 
\bibitem[Mor23]{Mor}
H. Morimura,
\emph{Algebraic deformations and Fourier--Mukai transforms for Calabi--Yau manifolds},
Proc. Amer. Math. Soc. 151, 29-43 (2023)

\bibitem[MP12]{MP}
D. Maulik and B. Poonen,
\emph{Néron--Severi groups under specialization},
Duke Math. J. 161(11), 2167-2206 (2012).

\bibitem[Ola]{Ola}
N. Olander,
\emph{Orlov's theorem in the smooth proper case},
arXiv:2006.15173.

\bibitem[Orl97]{Orl}
D. Orlov,
\emph{Equivalences of derived categories and K3 surfaces},
J. Math. Sci. 84, 1361-1381 (1997).

\bibitem[PS97]{PS}
M. Popa and C. Schnell,
\emph{Derived invariance of the number of holomporphic $1$-forms and vector fields},
Ann. Sci. \'Ec. Supér, Série 4 44(3), 527-536 (1997).

\bibitem[Ray74]{Ray}
M. Raynaud,
\emph{G\'eom\'etrie analytique rigide d'eaprès Tate, Kiehl},
M\'emoires de la S.M.F. 39-40, 319-327 (1974).

\bibitem[Sch]{Sch}
C. Schnell,
\emph{The fundamental group is not a derived invariant},
arXiv:1112.3586

\bibitem[Ser56]{Ser}
J. P. Serre,
\emph{Géométrie algébrique et géométrie analytique},
Annales de l'Institut Fourier, Tome 6 (1956).

\bibitem[SP]{SP}
The Stacks Project authors,
\emph{Stacks project},
https://stacks.math.columbia.edu.

\bibitem[Via13]{Via}
C. Vial,
\emph{Algebraic cycles and fibrations},
Documenta Math. 18, 1521-1553 (2013). 
\end{thebibliography}
\end{document}